\theoremstyle{plain} 
\newtheorem{theorem}{Theorem}[section]
\newtheorem{proposition}[theorem]{Proposition}
\newtheorem{lemma}[theorem]{Lemma} 
\theoremstyle{definition} 
\newtheorem{definition}[theorem]{Definition}
\theoremstyle{remark} 
\newtheorem{example}[theorem]{Example}
\newtheorem{remark}[theorem]{Remark}
    \newcommand{\IGNORE}[1]{}
   \newcommand{\C}{{\mathrm{C}}}
   \newcommand{\CC}{{\mathscr{C}}}
    \newcommand{\D}{{\mathscr{D}}}
    \renewcommand{\H}{{\mathscr{H}}}
    \newcommand{\J}{{\mathcal{J}}}
    \newcommand{\K}{{\mathscr{K}}}
    \renewcommand{\L}{{\mathscr{L}}}
    \newcommand{\M}{{\mathscr{M}}}
    \newcommand{\N}{\mathbb{N}} 
    \renewcommand{\P}{\mathsf{P}}
    \newcommand{\R}{\mathbb{R}}
    \newcommand{\T}{\mathscr{T}}
    \newcommand{\GS}{\geqslant}
    \newcommand{\ID}{{\mathrm{id}}}
    \newcommand{\LS}{\leqslant}
    \newcommand{\MM}{\mathbb{M}}
    \newcommand{\SP}{{\mathscr{P}}}
    \newcommand{\LEB}{{\mathcal{L}}}
     \newcommand{\calT}{{\mathcal{T}}}
    \newcommand{\LIP}{{\mathrm{Lip}}}
     \newcommand{\OP}{{\mathcal{O}}}
    \newcommand{\OPT}{{\mathrm{opt}}}
    \newcommand{\REG}{{\mathrm{reg}}}
    \newcommand{\RHO}{\varrho}
    \newcommand{\SUB}[1]{
        _{\raisebox{1ex}{\scriptsize{$#1$}}}}
    \newcommand{\VAN}{{\mathcal{N}}}
    \newcommand{\CHAR}{{\mathbbm{1}}}
    \newcommand{\PROJ}[1]{\sfP\kern-1pt_{#1}}
    \newcommand{\RRHO}{{\boldsymbol{\varrho}}}
    \newcommand{\WEAK}{
        \DOTSB\protect\relbar\protect\joinrel\rightharpoonup}
    \newcommand{\restr}[1]{\lower3pt\hbox{$|_{#1}$}}
    \DeclareMathOperator{\SPT}{{\mathrm{spt}}}
    \numberwithin{equation}{section}
\newcommand{\EEE}{\color{black}} 
\newcommand{\KK}{\mathbb{K}}
\newcommand{\KKN}{\mathbb{K}^N}
\newcommand{\w}{m}
\newcommand{\p}{2}
\newcommand{\leb}{\mathfrak m}
\title
    [Sticky particle dynamics with interactions]
    {Sticky particle dynamics with interactions}
\author
    {Y. Brenier, W. Gangbo, G. Savar\'{e}, and M. Westdickenberg} 
    \address
    {{\em Yann Brenier} \newline
     Departement de Math\'ematiques,
     Universit\' de Nice,
    Parc Valrose, 06108 Nice,
     France}
\email
    {brenier@math.unice.fr}
\urladdr
    {http://people.math.gatech.edu/\textasciitilde gangbo}
\address
    {{\em Wilfrid Gangbo} \newline
     School of Mathematics,
     Georgia Institute of Technology,
     686~Cherry Street,
     Atlanta, GA~30332-0160,
     U.S.A.}
\email
    {gangbo@math.gatech.edu}
\urladdr
    {http://people.math.gatech.edu/\textasciitilde gangbo}
\address
    {{\em Giuseppe Savar\'{e}} \newline
     Dipartimento di Matematica, 
     Universit\`{a} di Pavia. 
     Via Ferrata, 
     1--27100 Pavia, 
     Italy.}
\email
    {giuseppe.savare@unipv.it}
\urladdr
    {http://www.imati.cnr.it/\textasciitilde savare}
\address
    {{\em Michael Westdickenberg} \newline
     School of Mathematics,
     Georgia Institute of Technology,
     686 Cherry Street,
     Atlanta, GA 30332-0160,
     U.S.A.}
\email
    {mwest@math.gatech.edu}
\urladdr
    {http://people.math.gatech.edu/\textasciitilde mwest}
\date{\today}
\subjclass[2000] 
    {35L65, 49J40, 82C40}
\keywords
    {Pressureless Gas Dynamics, Sticky particles, Wasserstein distance, 
     Monotone Rearrangement, Gradient Flows}
\begin{document}

\begin{abstract}
We consider compressible pressureless fluid flows in Lagrangian coordinates in
one space dimension. We assume that the fluid self-interacts through a force
field generated by the fluid itself. We explain how this flow can be described
by a differential inclusion on the space of transport maps, in particular when a
sticky particle dynamics is assumed. We study a discrete particle approximation
and we prove global existence and stability results for solutions of this
system. In the particular case of the Euler-Poisson system 
in the attractive regime our approach yields an explicit representation
formula for the solutions.
\end{abstract}

    \maketitle \tableofcontents

%


\newcommand{\cA}{{\ensuremath{\mathcal A}}}
\newcommand{\cB}{{\ensuremath{\mathcal B}}}
\newcommand{\calC}{{\ensuremath{\mathcal C}}}
\newcommand{\cD}{{\ensuremath{\mathcal D}}}
\newcommand{\cE}{{\ensuremath{\mathcal E}}}
\newcommand{\cF}{{\ensuremath{\mathcal F}}}
\newcommand{\cG}{{\ensuremath{\mathcal G}}}
\newcommand{\cH}{{\ensuremath{\mathcal H}}}
\newcommand{\cK}{{\ensuremath{\mathcal K}}}
\newcommand{\cJ}{{\ensuremath{\mathcal J}}}
\newcommand{\cL}{{\ensuremath{\mathcal L}}}
\newcommand{\cM}{{\ensuremath{\mathcal M}}}
\newcommand{\cN}{{\ensuremath{\mathcal N}}}
\newcommand{\cO}{{\ensuremath{\mathcal O}}}
\newcommand{\cP}{{\ensuremath{\mathcal P}}}
\newcommand{\cT}{{\ensuremath{\mathcal T}}}
\newcommand{\cX}{{\ensuremath{\mathcal X}}}
\newcommand{\cY}{{\ensuremath{\mathcal Y}}}
\newcommand{\cV}{{\ensuremath{\mathcal V}}}
\newcommand{\cW}{{\ensuremath{\mathcal W}}}
\renewcommand{\aa}{{\mbox{\boldmath$a$}}}
\newcommand{\bb}{{\mbox{\boldmath$b$}}}
\newcommand{\cc}{{\mbox{\boldmath$c$}}}
\newcommand{\dd}{{\mbox{\boldmath$d$}}}
\newcommand{\ee}{{\mbox{\boldmath$e$}}}
\newcommand{\ff}{{\mbox{\boldmath$f$}}}
\renewcommand{\gg}{{\mbox{\boldmath$g$}}}
\newcommand{\hh}{{\mbox{\boldmath$h$}}}
\newcommand{\ii}{{\mbox{\boldmath$i$}}}
\newcommand{\jj}{{\mbox{\boldmath$j$}}}
\newcommand{\mm}{{\mbox{\boldmath$m$}}}
\newcommand{\nn}{{\mbox{\boldmath$n$}}}
\newcommand{\oo}{{\mbox{\boldmath$o$}}}
\newcommand{\pp}{{\mbox{\boldmath$p$}}}
\newcommand{\rr}{{\mbox{\boldmath$r$}}}
\renewcommand{\ss}{{\mbox{\boldmath$s$}}}
\renewcommand{\tt}{{\mbox{\boldmath$t$}}}
\newcommand{\uu}{{\mbox{\boldmath$u$}}}
\newcommand{\vv}{{\mbox{\boldmath$v$}}}
\newcommand{\ww}{{\mbox{\boldmath$w$}}}
\newcommand{\xx}{{\mbox{\boldmath$x$}}}
\newcommand{\yy}{{\mbox{\boldmath$y$}}}
\newcommand{\zz}{{\mbox{\boldmath$z$}}}
\newcommand{\saa}{{\mbox{\scriptsize\boldmath$a$}}}
\newcommand{\sbb}{{\mbox{\scriptsize\boldmath$b$}}}
\newcommand{\scc}{{\mbox{\scriptsize\boldmath$c$}}}
\newcommand{\sdd}{{\mbox{\scriptsize\boldmath$d$}}}
\renewcommand{\see}{{\mbox{\scriptsize\boldmath$e$}}}
\newcommand{\sff}{{\mbox{\scriptsize\boldmath$f$}}}
\newcommand{\sgg}{{\mbox{\scriptsize\boldmath$g$}}}
\newcommand{\shh}{{\mbox{\scriptsize\boldmath$h$}}}
\newcommand{\sii}{{\mbox{\scriptsize\boldmath$i$}}}
\newcommand{\sjj}{{\mbox{\scriptsize\boldmath$j$}}}
\newcommand{\smm}{{\mbox{\scriptsize\boldmath$m$}}}
\newcommand{\snn}{{\mbox{\scriptsize\boldmath$n$}}}
\newcommand{\soo}{{\mbox{\scriptsize\boldmath$o$}}}
\newcommand{\spp}{{\mbox{\scriptsize\boldmath$p$}}}
\newcommand{\sqq}{{\mbox{\scriptsize\boldmath$q$}}}
\newcommand{\srr}{{\mbox{\scriptsize\boldmath$r$}}}
\newcommand{\sss}{{\mbox{\scriptsize\boldmath$s$}}}
\newcommand{\stt}{{\mbox{\scriptsize\boldmath$t$}}}
\newcommand{\suu}{{\mbox{\scriptsize\boldmath$u$}}}
\newcommand{\svv}{{\mbox{\scriptsize\boldmath$v$}}}
\newcommand{\sww}{{\mbox{\scriptsize\boldmath$w$}}}
\newcommand{\sxx}{{\mbox{\scriptsize\boldmath$x$}}}
\newcommand{\syy}{{\mbox{\scriptsize\boldmath$y$}}}
\newcommand{\szz}{{\mbox{\scriptsize\boldmath$z$}}}
\newcommand{\aA}{{\mbox{\boldmath$A$}}}
\newcommand{\bB}{{\mbox{\boldmath$B$}}}
\newcommand{\cC}{{\mbox{\boldmath$C$}}}
\newcommand{\dD}{{\mbox{\boldmath$D$}}}
\newcommand{\eE}{{\mbox{\boldmath$E$}}}
\newcommand{\fF}{{\mbox{\boldmath$F$}}}
\newcommand{\gG}{{\mbox{\boldmath$G$}}}
\newcommand{\hH}{{\mbox{\boldmath$H$}}}
\newcommand{\iI}{{\mbox{\boldmath$I$}}}
\newcommand{\jJ}{{\mbox{\boldmath$J$}}}
\newcommand{\mM}{{\mbox{\boldmath$M$}}}
\newcommand{\nN}{{\mbox{\boldmath$N$}}}
\newcommand{\oO}{{\mbox{\boldmath$O$}}}
\newcommand{\pP}{{\mbox{\boldmath$P$}}}
\newcommand{\qQ}{{\mbox{\boldmath$Q$}}}
\newcommand{\rR}{{\mbox{\boldmath$R$}}}
\newcommand{\sS}{{\mbox{\boldmath$S$}}}
\newcommand{\tT}{{\mbox{\boldmath$T$}}}
\newcommand{\uU}{{\mbox{\boldmath$U$}}}
\newcommand{\vV}{{\mbox{\boldmath$V$}}}
\newcommand{\wW}{{\mbox{\boldmath$W$}}}
\newcommand{\xX}{{\mbox{\boldmath$X$}}}
\newcommand{\yY}{{\mbox{\boldmath$Y$}}}
\newcommand{\saA}{{\mbox{\scriptsize\boldmath$A$}}}
\newcommand{\sbB}{{\mbox{\scriptsize\boldmath$B$}}}
\newcommand{\scC}{{\mbox{\scriptsize\boldmath$C$}}}
\newcommand{\sdD}{{\mbox{\scriptsize\boldmath$D$}}}
\newcommand{\seE}{{\mbox{\scriptsize\boldmath$E$}}}
\newcommand{\sfF}{{\mbox{\scriptsize\boldmath$F$}}}
\newcommand{\sgG}{{\mbox{\scriptsize\boldmath$G$}}}
\newcommand{\shH}{{\mbox{\scriptsize\boldmath$H$}}}
\newcommand{\siI}{{\mbox{\scriptsize\boldmath$I$}}}
\newcommand{\sjJ}{{\mbox{\scriptsize\boldmath$J$}}}
\newcommand{\smM}{{\mbox{\scriptsize\boldmath$M$}}}
\newcommand{\snN}{{\mbox{\scriptsize\boldmath$N$}}}
\newcommand{\soO}{{\mbox{\scriptsize\boldmath$O$}}}
\newcommand{\spP}{{\mbox{\scriptsize\boldmath$P$}}}
\newcommand{\sqQ}{{\mbox{\scriptsize\boldmath$Q$}}}
\newcommand{\srR}{{\mbox{\scriptsize\boldmath$R$}}}
\newcommand{\ssS}{{\mbox{\scriptsize\boldmath$S$}}}
\newcommand{\stT}{{\mbox{\scriptsize\boldmath$T$}}}
\newcommand{\suU}{{\mbox{\scriptsize\boldmath$U$}}}
\newcommand{\svV}{{\mbox{\scriptsize\boldmath$V$}}}
\newcommand{\swW}{{\mbox{\scriptsize\boldmath$W$}}}
\newcommand{\sxX}{{\mbox{\scriptsize\boldmath$X$}}}
\newcommand{\syY}{{\mbox{\scriptsize\boldmath$Y$}}}
\newcommand{\aalpha}{{\mbox{\boldmath$\alpha$}}}
\newcommand{\bbeta}{{\mbox{\boldmath$\beta$}}}
\newcommand{\ggamma}{{\mbox{\boldmath$\gamma$}}}
\newcommand{\ddelta}{{\mbox{\boldmath$\delta$}}}
\newcommand{\eeta}{{\mbox{\boldmath$\eta$}}}
\newcommand{\llambda}{{\mbox{\boldmath$\lambda$}}}
\newcommand{\mmu}{{\mbox{\boldmath$\mu$}}}
\newcommand{\nnu}{{\mbox{\boldmath$\nu$}}}
\newcommand{\ppi}{{\mbox{\boldmath$\pi$}}}
\newcommand{\rrho}{{\mbox{\boldmath$\rho$}}}
\newcommand{\ssigma}{{\mbox{\boldmath$\sigma$}}}
\newcommand{\ttau}{{\mbox{\boldmath$\tau$}}}
\newcommand{\pphi}{{\mbox{\boldmath$\phi$}}}
\newcommand{\ppsi}{{\mbox{\boldmath$\psi$}}}
\newcommand{\xxi}{{\mbox{\boldmath$ \xi$}}}
\newcommand{\zzeta}{{\mbox{\boldmath$ \zeta$}}}
\newcommand{\saalpha}{{\mbox{\scriptsize\boldmath$\alpha$}}}
\newcommand{\sbbeta}{{\mbox{\scriptsize\boldmath$\beta$}}}
\newcommand{\sggamma}{{\mbox{\scriptsize\boldmath$\gamma$}}}
\newcommand{\sllambda}{{\mbox{\scriptsize\boldmath$\lambda$}}}
\newcommand{\smmu}{{\mbox{\scriptsize\boldmath$\mu$}}}
\newcommand{\snnu}{{\mbox{\scriptsize\boldmath$\nu$}}}
\newcommand{\sttau}{{\mbox{\scriptsize\boldmath$\tau$}}}
\newcommand{\seeta}{{\mbox{\scriptsize\boldmath$\eta$}}}
\newcommand{\sssigma}{{\mbox{\scriptsize\boldmath$\sigma$}}}
\newcommand{\srrho}{{\mbox{\scriptsize\boldmath$\rho$}}}
\newcommand{\ssttau}{{\mbox{\boldmath$\scriptscriptstyle\tau$}}}
\newcommand{\sseeta}{{\mbox{\boldmath$\scriptscriptstyle\eta$}}}
\newcommand{\ssxx}{{\mbox{\boldmath$\scriptscriptstyle x$}}}

\newcommand{\sfa}{{\sf a}}
\newcommand{\sfb}{{\sf b}}
\newcommand{\sfc}{{\sf c}}
\newcommand{\sfd}{{\sf d}}
\newcommand{\sfe}{{\sf e}}
\newcommand{\sfg}{{\sf g}}
\newcommand{\sfh}{{\sf h}}
\newcommand{\sfi}{{\sf i}}
\newcommand{\sfm}{{\sf m}}
\newcommand{\sfn}{{\sf n}}
\newcommand{\sfp}{{\sf p}}
\newcommand{\sfq}{{\sf q}}
\newcommand{\sfr}{{\sf r}}
\newcommand{\sfs}{{\sf s}}
\newcommand{\sft}{{\sf t}}
\newcommand{\sfu}{{\sf u}}
\newcommand{\sfv}{{\sf v}}
\newcommand{\sfx}{{\sf x}}
\newcommand{\sfy}{{\sf y}}
\newcommand{\sfK}{{\sf K}}
\newcommand{\sfC}{{\sf C}}
\newcommand{\sfD}{{\sf D}}
\newcommand{\sfP}{{\sf P}}
\newcommand{\sfM}{{\sf M}}
\newcommand{\sfS}{{\sf S}}
\newcommand{\sfT}{{\sf T}}
\newcommand{\sfU}{{\sf U}}
\newcommand{\sfV}{{\sf V}}
\newcommand{\sfX}{{\sf X}}
\newcommand{\sfY}{{\sf Y}}
\newcommand{\sfZ}{{\sf Z}}
\newcommand{\fra}{{\frak a}}
\newcommand{\frb}{{\frak b}}
\newcommand{\frc}{{\frak c}}
\newcommand{\frd}{{\frak d}}
\newcommand{\fre}{{\frak e}}
\newcommand{\frg}{{\frak g}}
\newcommand{\frh}{{\frak h}}
\newcommand{\frl}{{\frak l}}
\newcommand{\frm}{{\frak m}}
\newcommand{\frv}{{\frak v}}
\newcommand{\frV}{{\frak V}}
\newcommand{\frU}{{\frak U}}
\newcommand{\frx}{{\frak x}}
\newcommand{\fry}{{\frak y}}
\newcommand{\frK}{{\frak K}}
\newcommand{\frC}{{\frak C}}
\newcommand{\frD}{{\frak D}}
\newcommand{\frP}{{\frak P}}
\newcommand{\frM}{{\frak M}}
\newcommand{\Kliminf}{K\kern-3pt-\kern-2pt\mathop{\rm lim\,inf}\limits}  
\newcommand{\supp}{\mathop{\rm supp}\nolimits}   
\newcommand{\conv}{\mathop{\rm conv}\nolimits}   
\newcommand{\diam}{\mathop{\rm diam}\nolimits}   
\newcommand{\argmin}{\mathop{\rm argmin}\limits}   
\newcommand{\Lip}{\mathop{\rm Lip}\nolimits}          
\newcommand{\interior}{\mathop{\rm int}\nolimits}   
\newcommand{\aff}{\mathop{\rm aff}\nolimits}   
\newcommand{\tr}{\mathop{\rm tr}\nolimits}     
\newcommand{\signo}{\mathop{\rm sign}\nolimits_0}
\newcommand{\sign}{\mathop{\rm sign}\nolimits}
\newcommand{\card}{\mathop{\rm card}\nolimits}
\newcommand{\esssup}{\mathop{\rm ess\text{-}sup}\nolimits}
\renewcommand{\d}{{\mathrm d}}
\newcommand{\dt}{{\d t}}
\newcommand{\rmI}{{\mathrm I}}
\newcommand{\ddt}{{\frac \d\dt}}
\newcommand{\dx}{{\d x}}
\newcommand{\rmD}{{\mathrm D}}
\newcommand{\pd}[1]{{\partial_{#1}}}
\newcommand{\pdt}{{\pd t}}
\newcommand{\pdx}{{\pd x}}
\newcommand{\topref}[2]{\stackrel{\eqref{#1}}#2}
\newcommand{\Haus}[1]{{\mathscr H}^{#1}}     
\newcommand{\Leb}[1]{{\mathscr L}^{#1}}      
\newcommand{\la}{{\langle}}                  
\newcommand{\ra}{{\rangle}}
\newcommand{\down}{\downarrow}              
\newcommand{\up}{\uparrow}
\newcommand{\eps}{\varepsilon}  
\newcommand{\nchi}{{\raise.3ex\hbox{$\chi$}}}
\newcommand{\weakto}{\rightharpoonup}
\newcommand{\Rd}{{\R^d}}
\newcommand{\Rn}{{\R^n}}
\newcommand{\Rm}{{\R^m}}
\newcommand{\RN}{{\R^N}}
\newcommand{\media}{\mkern12mu\hbox{\vrule height4pt           %
          depth-3.2pt                                 
          width5pt}\mkern-16.5mu\int\nolimits}        

\newcommand{\Dist}{D}
\newcommand{\Semi}{U}
\newcommand{\Char}{\mathbbm{1}}
\newcommand{\X}{X}
\newcommand{\Scalar}[2]{(#1|#2)}
\newcommand{\Rightdert}{{\frac{\d}{\d t}\kern-5pt}^+}
\newcommand{\Rightderw}{{\frac{\d}{\d w}\kern-5pt}^+}
\newcommand{\Leftderw}{{\frac{\d}{\d w}\kern-5pt}^-}
\newcommand{\Proj}[1]{\sfP\kern-1pt_{#1}}
\newcommand{\ProjH}[1]{\Proj{\cH_{#1}}}
\newcommand{\PositiveCone}{\cN}
\newcommand{\PC}[1]{\PositiveCone_{#1}}
\newcommand{\Lebref}{\lambda}
\newcommand{\Xspace}[1]{\cV_{#1}}
\newcommand{\DXspace}{\cV^{\rm discr}}
\newcommand{\Lspace}[1]{\cX_{#1}(0,1)}
\newcommand{\pseudonorm}[2]{[#2]_{#1}}
\newcommand{\pseudonormp}[2]{\pseudonorm{#1}{#2}^{#1}}
\newcommand{\En}[3]{E_{#1}(#2|#3)}



\section{Introduction}\label{S:I}

In this paper, we consider a simple model for one-dimensional compressible fluid
flows under the influence of a force field that is generated by the fluid
itself. It takes the form of a hyperbolic conservation law for the density
$\RHO$, which is a nonnegative measure in time and space and describes the
distribution of mass or electric charge, and the real-valued Eulerian velocity
field $v$. For suitable initial data $(\RHO,v)(t=0,\cdot) =: (\bar{\RHO},
\bar{v})$, the unknowns $(\RHO,v)$ satisfy 
%
\begin{equation}
    \left.\begin{aligned}
        \partial_t\RHO
            + \partial_x(\RHO v) &= 0
\\
        \partial_t(\RHO v)
            + \partial_x(\RHO v^2)
            &=   f[\RHO]
    \end{aligned}\right\}
    \quad\text{in $[0,\infty)\times\R$.}
\label{E:CONS2}
\end{equation}

The first equation in \eqref{E:CONS2}, called the continuity equation, describes
the local conservation of mass or electric charge. Without loss of generality,
we will assume in the following that the total mass/charge is equal to one
initially and that the quadratic moment is finite so that $\RHO(t,\cdot) \in
\SP_2(\R)$ for all $t\GS 0$, with $\SP(\R)$ the space of probability measures
with finite quadratic moment. The second equation in \eqref{E:CONS2} describes
the conservation of momentum. We will assume in the following that $v(t,\cdot) \in
\L^2(\R, \RHO(t,\cdot))$ for all $t\GS 0$ so the kinetic energy is finite.

The  continuous map $f\colon\SP_2(\R) \longrightarrow \M(\R)$ in \eqref{E:CONS2}
describes the force field, with $\M(\R)$ the space of all signed Borel measures
with finite total variation. The force depends on the distribution of mass or
electric charge and  we will assume that $f[\RHO]$ is absolutely continuous with
respect to $\RHO$. For further assumptions see Section~\ref{S:GE}. The typical (simplest) form of $f$ is
\begin{equation}
  \label{eq:1}
  f[\RHO]=-\RHO\, \partial_x q_\RHO
  \quad\text{with}\quad
  q_\RHO(x)=V(x)+\int_\R W(x-y)\,\d\RHO(y)
\end{equation}
for suitable $\C^1$ potential functions $V, W$ with (at most) linearly growing derivatives.

Another relevant example we have in mind is the Euler-Poisson system, for which 
\begin{equation}
  f[\RHO] = -\RHO \,\partial_x q_\RHO
  \quad\text{with $q_\RHO$ solution of}\quad
  -\partial_{xx}^2 q_\RHO = \lambda ( \RHO - \sigma).
\label{eq:2}
\end{equation} 
When $\RHO$ is absolutely continuous with respect to the one-dimensional
Lebesgue measure $\LEB^1$, then the function $q_\RHO$ admits a
representation similar to \eqref{eq:1}, with
\begin{equation}
  \label{eq:3}
  V(x):=-\frac \lambda2\int_\R |x-y|\,\d\sigma(y),\quad
  W(x):=\frac \lambda 2|x|.
\end{equation}
If $\rho$ is not absolutely continuous with respect to $\LEB^1$, then we have a
similar representation with a nondifferentiable $W$, so that $f[\rho]$ must be
defined by a suitable approximation process.

The Euler-Poisson equations in the repulsive regime (with $\lambda<0$ and
negative concave potential $W$) is a simple model for semiconductors. In this
case, $\RHO$ describes the electron or hole distribution and the scalar function
$q_\RHO$ represents the electric potential generated by the distribution of
charges in the material. Here $\sigma$ is the concentration of ionized
impurities. The Euler-Poisson equations in the attractive regime (with
$\lambda>0$ and positive convex potential $W$) is the one-dimensional version of
a cosmological model for the universe at an early stage, describing the
formation of galaxies. Now $q_\RHO$ represents the gravitational potential and
$\sigma=0$.


\subsection{Singular solutions and particle models.}

Since there is no pressure in \eqref{E:CONS2}, there is no mechanism that forces
the density $\RHO$ to be absolutely continuous with respect to the Lebesgue
measure. In fact, the system~\eqref{E:CONS2} admits solutions that are singular
measures. Assume that we are given initial data in the form of a finite linear
combination of Dirac measures:
\begin{equation}
  \bar{\RHO} = \sum_{i=1}^N \bar m_i \delta_{\bar{x}_i}
  \quad\text{and}\quad
  \bar{\RHO}\bar{v} = \sum_{i=1}^N \bar m_i \bar{v}_i \delta_{\bar{x}_i},
\label{E:DISCDATA}
\end{equation}
where $\bar{\xx}=(\bar{x}_1,\cdots, \bar x_N)\in\R^N$ are the initial locations
of $N$ particles denoted $P_1,\cdots, P_N$, with corresponding masses $\bar
\mm=(\bar m_1, \cdots,\bar m_N)$ and initial velocities $\bar\vv=(\bar
v_1,\cdots,\bar v_N)$. We require that $\bar m_i>0$ and $\sum_i \bar m_i = 1$ so
that $\bar{\RHO}\in\SP(\R)$. For all times $t\GS 0$, we can assume that the
positions $\xx(t)= (x_1(t),\cdots,x_N(t))$ are monotonically ordered, so that
they are unambiguously determined and attached to the particles. Then (at least
formally) there is a solution of \eqref{E:CONS2} in the form of a linear
combination of Dirac measures:
\begin{equation}
  \RHO(t,\cdot) = \sum_{i=1}^N m_i \delta_{x_i(t)}
  \quad\text{and}\quad
  (\RHO v)(t,\cdot) = \sum_{i=1}^N m_i v_i(t) \delta_{x_i(t)},
\label{E:PART}
\end{equation}
where the functions $(x_i,v_i)$ solve the system of ordinary differential
equations
\begin{equation}
  \dot{x}_i(t) = v_i(t),\quad
  \dot v_i(t)=a_{\bar \smm,i}(\xx(t))
  \quad\text{and}\quad
  (x_i,v_i)(t=0) = (\bar{x}_i,\bar{v}_i)\label{eq:4} 
\end{equation}
between particle collisions. Here $ a_{\smm,i}(\xx)\EEE$ is the value in the
point $x_i(t)$ of the Radon-Nikodym derivative of the force $f[\RHO]$ with
respect to the measure $\RHO$, so that
\begin{equation}
  \label{eq:57}
  f[\RHO]=\sum_{i=1}^N a_{\smm,i}(\xx)\,m_i\delta_{x_i}\quad
  \text{if}\quad\RHO=\sum_{i=1}^N m_i\,\delta_{x_i},
\end{equation}
which is well defined when all $N$ particles are distinct.

Upon collision of, say, two particles with masses $m_k$ and $m_{k+1}$ at some
time $t>0$, the velocities of each one of them are changed to
\begin{equation}
  v_k(t+) = v_{k+1}(t+) = \frac{m_k v_k(t-) + m_{k+1} v_{k+1}(t-)}{m_k+m_{k+1}},
\label{E:CONSMOM}
\end{equation}
so that the momentum is preserved during the collision. Since both particles
continue their journey with the same velocity, they may be considered as one
bigger particle with mass $m_k+m_{k+1}$. Collisions of more than two particles
can be handled in a similar fashion. We will refer to any solution of
\eqref{E:CONS2} in the form \eqref{E:PART} as a discrete particle solution and
we will say that it satisfies a \emph{global sticky condition} if particles
after collision are not allowed to split. In this case, after each collision,
one could relabel the particles so that the system \eqref{eq:4} still makes
sense (with $N$ reduced in each particle collision) and induces a global in time
evolution.

Let us denote by $\KKN$ the closed cone
\begin{equation}
  \label{eq:15}
  \KKN:=\big\{\xx\in \R^N:x_1\LS x_2\LS \cdots\LS x_N\big\},
\end{equation}
whose interior is $\interior{\KKN}=\big\{\xx\in \R^N:x_1 <x_2< \cdots<
x_N\big\}$. The construction of discrete particle solutions as outlined above
can be done rigorously whenever the functions $a_{\smm,i}:\interior{\KKN}\to
\R^N$ are uniformly continuous in each bounded set (so that they admit a
continuous extension to $\KKN$ still denoted by $a_{\smm,i}$) and satisfies the
compatibility condition
\begin{equation}
  \label{eq:9}
  a_{\smm,k}(\xx) = a_{\smm,k+1}(\xx)
  \quad\text{if $x_k=x_{k+1}$ for some $1\LS k<N$.}
\end{equation}
This is certainly the case when the potentials $V,W$ considered in \eqref{eq:1}
are of class $\C^1$. On the other hand, the case of the Euler-Poisson system is
much more subtle and presents different features in the attractive or the
repulsive case.


\subsubsection*{The Euler-Poisson case in the repulsive regime: splitting and collapsing of masses}

Let us consider the simplest situation of $N$ distinct particles with equal
initial velocities $\bar v$, in the repulsive regime $\lambda=-1$ with
$\sigma=0$. Let $M_0:=0$, $M_i:=\sum_{j=1}^i m_j$ for $i=1,\ldots,N$ and set
$A_i:=\frac 12(M_{i-1}+M_i-1)$. Then it is not difficult to check (see Example
\ref{ex:EP1}) that in the repulsive regime
\begin{equation}
  \label{eq:5}
  a_{\smm,i}(\xx)=A_i
  \quad \text{for all $i$ if $\xx\in \interior{\KKN}$,}
\end{equation}
and so there is no continuous extension satisfying \eqref{eq:9}. Starting from
distinct initial positions, particles follow (at least for a small time
interval) the free motion paths
\begin{equation}
  \label{eq:6}
  x_i(t)=\bar x_i+t\bar v+\frac 12 A_i t^2.
\end{equation}
Since $A_i\LS A_{i+1}$ for all $i$, there are no collisions. Taking the limit as
the initial positions of two or more particles coincide we obtain the same
representation for every $\xx\in \KKN$.
%
%
On the other hand, if two particles $P_k, P_{k+1}$ coincide at the time $t=0$,
i.e.\ $\bar x_k=\bar x_{k+1}=\bar x$ with the same initial velocity $\bar v$,
then the ``sticky'' solution $x_k(t)=x_{k+1}(t)=\bar x+t \bar
v+\frac14(A_k+A_{k+1}) t^2$ gives raise to an admissible solution to
\eqref{E:CONS2} which is different from the previous one.
%
%
One could also consider a solution where $P_k,P_{k+1}$ stick in a small initial
time interval $[0,s]$ and then evolve according to \eqref{eq:6}.

Considering a situation where the number $N$ of admissible particles grows to
infinity with a uniform initial mass distribution concentrating at the origin,
one can guess that a ``repulsive'' solution arising from a unit mass
concentrated at $\bar x$ should istantaneously diffuse, becoming absolutely
continuous with respect to the Lebesgue measure $\LEB^1$: the explicit
formula is
\begin{equation}
  \label{eq:8}
  \RHO(t,\cdot)=u(t,\cdot)\LEB^1
  \quad\text{with}\quad
  u(t,x)=\frac 1{2t^2} \nchi_{(\bar x+\bar v t -\frac 14 t^2,
    \bar x+\bar v t+\frac 14 t^2)}(x)
  \quad\text{for all $x\in\R$ and $t\GS 0$.}
\end{equation}
An even more complicated situation occurs e.g.\ if $\bar v_i=0$ for $i\neq
k,k+1$, but $\bar v_k>0>\bar v_{k+1}$ in such a way that a collision occurs
between $P_k$ and $P_{k+1}$ at some time $t=r$, after which the particles could
stick or wait for some time and then evolve as in the previous example.

It would be important to find a selection mechanism that gives raise to a stable
notion of solution and to obtain a continuous model by passing to the limit in
the number of particles. In this paper we study a criterium of the following
type : Assume that two particle $P_k,P_{k+1}$ collide at some time $r>0$ with
incoming velocities $v_k(r_-)\GS v_{k+1}(r_-)$. Then the particles will stick
together for all times $r<t<s$ provided that $s$ is small enough so that
\begin{equation}
  \label{eq:10}
  v_k(r_-)+\int_r^t a_{\smm,k}(\xx(\tau))\,\d\tau\GS v_k(t)=v_{k+1}(t)\GS
  v_{k+1}(r_-)+\int_r^t a_{\smm,k+1}(\xx(\tau))\,\d\tau
\end{equation}
for all $r<t<s$. Conversely, if \eqref{eq:10} becomes false for some time $s>r$,
then the particles may separate again. A rigorous formulation of condition
\eqref{eq:10} in the case of a simultaneous collision or separation of more than
two particles, or of a continuous distribution of masses, can be better
understood in the famework of differential inclusions in a Lagrangian setting,
which we will describe in Section \ref{S:DLS}. Before giving an idea of this
approach, let us brefly consider how \eqref{eq:10} greatly simplifies in the
attractive regime.


\subsubsection*{The attractive Euler-Poisson system and the sticky condition}

In the attractive case, we can simply invert the signs in \eqref{eq:5}. It turns
out, however, that the behaviour of the two-particles example considered in the
previous paragraph changes completely, since the limit when two particles
collapse exhibit a strong stability: after a collision, two or more particles
stick together and do not split ever again, giving raise to a \emph{global sticky
solution.}

This reflects the fact that the sticky condition in the attractive regime
implies \eqref{eq:10} for all $s>r$: the functions $a_{\smm,i}$ defined by the
negative of \eqref{eq:5} always satisfy $a_{\smm,k}(\xx)\GS a_{\smm,k+1}(\xx)$
and the incoming velocities of two particles $P_k,P_{k+1}$ colliding at some
time $r$ satisfies $v_k(r-)\GS v_{k+1}(r_-)$, so that any sticky evolution
corresponding to $x_k(t)=x_{k+1}(t)$ for $t\GS r$ will satisfy \eqref{eq:10}.

As we shall see, the differential description in the Lagrangian setting we will
adopt encodes \eqref{eq:10} and corresponds to a sticky condition whenever the
acceleration field is continuous (as in \eqref{eq:9}) or it is of attractive
type. In the repulsive case it will model a suitable relaxation mechanism
allowing for separation of particles after collision, still preserving the
stability of the evolution.


\subsection{Lagrangian description and differential inclusions}

In this paper, we will give an interpretation of system \eqref{E:CONS2} in the
framework of differential inclusions. As before, let us first consider the
simpler case of the dynamic of a finite number of particles. We can identify the
positions of a collection of particles $P_1,\cdots,P_N$ with a vector
$\xx=(x_1,\cdots,x_N)\in \R^N$: since we labeled the particles in a monotone
way, it is not admissible for particles to pass by one another, so the order of
the locations must be preserved and the vector $\xx$ is confined in the closed
convex cone $\KKN$ defined by \eqref{eq:15}. Denoting by $\vv
=(v_1,\cdots,v_N)\in \R^N$ the vector of the velocities of the particles, their
trajectories between collisions are mostly determined by a system of
differential equation
\begin{equation}
  \label{eq:16}
  \dot\xx(t)=\vv(t),\quad
  \dot\vv(t)=\aa_\smm(\xx(t)),
\end{equation}
where $ \aa_\smm(\xx):=(a_{\smm,1}(\xx),\cdots,a_{\smm,N}(\xx))$ is a vector
field defined for $\xx\in \KKN$ as in \eqref{eq:4}, which in the simplest case
is continuous. Whenever the vector $\xx(t)$ hits the boundary of the domain
\begin{equation}
  \label{eq:21}
  \partial\KKN:=\big\{\xx\in \KKN:\Omega_\sxx\neq \varnothing\big\},\quad
  \Omega_\sxx := \big\{ j \colon x_j=x_{j+1}, j=1\ldots N-1 \big\},
\end{equation}
however, an instantaneous force changes its velocity in such a way that it stays
inside of $\KKN$.

In order to find a mathematical model that describes this situation, we must first
identify the set of admissible velocities at each point $\xx\in\KKN$,
which is called the tangent cone of $\KKN$ at $\xx$. It is defined by
\begin{equation}
  \label{eq:17}
  T_\sxx\KKN 
    := \mathrm{cl} \Big\{
      \theta(\yy-\xx):\yy\in\KKN,\ \theta\GS0 \Big\}.
\end{equation}
In our situation, it is not difficult to check that 
\begin{equation}
    T_\sxx\KKN = \Big\{ \vv\in\R^N \colon 
    \text{$v_j\LS v_{j+1} $ for all $j\in\Omega_\sxx$} \Big\}.
\label{E:DISCVEL}  
\end{equation}
Identity \eqref{E:DISCVEL} shows that when two particles collide, then the
velocity of the left particle cannot be greater than the velocity of the right
particle, so the left particle cannot pass.

Assume now that $\xx(t)\in\partial\KKN$ at some time $t$ and let $\vv(t-)$ be
the velocity immediately before the impact. That is, let $\vv(t-)$ be the
left-derivative of the curve $t \mapsto \xx(t)$ at time $t$. The instan\-taneous
force that is active on impact must change the velocity to a new value in the
tangent cone $T_{\sxx(t)}\KKN$ of admissible velocities. Typically, there are
many possibilities. Assuming inelastic collisions, we impose the impact law:
\begin{equation}
  \vv(t+) := \P\SUB{T_{\ssxx(t)}\KKN} \vv(t-),
\label{E:IMPACT}  
\end{equation}
where $\vv(t+)$ is the velocity immediately after impact (the right-derivative
of $\xx(t)$). We denote by $\PROJ{T_{\ssxx(t)}\KKN}$ the metric projection onto
$T_{\sxx(t)}\KKN$ so that
$$
  \|\vv(t-)-\vv(t+)\| = \min\Big\{ \|\vv(t-)-\uu\|_\smm \colon 
    \uu\in T_{\sxx(t)}\KKN \Big\}.
$$
Hence $\vv(t+)$ is the element in $T_{\sxx(t)}\KKN$ closest to $\vv(t-)$ with
respect to the weighted Euclidean distance induced by the norm
\begin{equation}
  \label{eq:24}
  \|\vv\|_{\smm}:=\sqrt{ \sum_{i=1}^N m_i v^2_i }
  \quad\text{for all $\vv\in T_{\sxx(t)}\KKN\subset \R^N$,}
\end{equation}
and therefore unique: see Figure~\ref{F:IMPACT} for a graphic representation of
this rule.
\begin{figure}[t]
\includegraphics[scale=0.75]{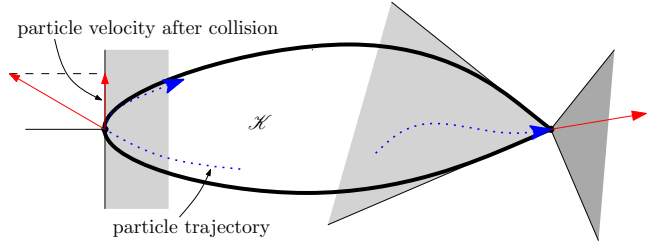}
\caption{Projection of velocities onto the tangent cone.}
\label{F:IMPACT}
\end{figure}

It is well-known that the metric projection onto closed convex cones admits a
variational characterization of its minimizers; see \cite{Zarantonello}. In
particular, we have
$$
\Big( \vv(t-)-\vv(t+) \Big) \cdot \uu \LS 0
    \quad\text{for all $\uu\in T_{\sxx(t)}\KKN$.}
$$
We deduce that the instantaneous force that changes the velocity upon impact
onto the boundary $\partial\KKN$, must be an element of the normal cone
$N_{\sxx(t)}\KKN$, which is defined as
\begin{equation}
  N_\sxx\KKN := \Big\{ \nn\in\R^N \colon 
  \text{$\nn\cdot (\yy-\xx) \LS 0$ for all $\yy \in\KKN$} \Big\}.
\label{E:NORM}
\end{equation}
Note that the normal cone $N_\sxx\KKN$ equals the subdifferential $\partial
I_{\KKN}(\xx)$ of the indicator function $I_{\KKN}$ of $\KKN$ at the point
$\xx$. This follows immediately from the definition of the subdifferential.

This suggests to consider the second-order differential inclusion
\begin{equation}
  \dot{\xx} = \vv,
  \quad\dot{\vv} + N_{\sxx}\KKN \ni \aa_\smm(\xx)
  \quad\text{in $[0,\infty)$.}
\label{E:SODI}
\end{equation}
Notice that since $\vv$ can exhibit jumps, solutions to \eqref{E:SODI} should be
properly defined in a weak sense in the framework of functions of bounded
variation. Second-order differential inclusion have been studied in the
literature and existence of solutions has been shown in a {\em genuinely finite
dimensional setting}. We refer the reader to \cites{BernicotVernel, Moreau,
Schatzman} and the references therein for further information. Due to the
possible {\em nonuniqueness} of solutions to second-order differential
inclusions \cite{Schatzman} and to the lack of estimates to pass to the limit
when $N\to\infty$, we need a better understanding of the particular features of
our setting, in particular of the convex cones $\KKN$.


\subsubsection*{The sticky condition and an equivalent formulation of
\eqref{E:SODI}}

It has been shown in \cite{NatileSavare} that the one-parameter family of normal
cones $N_{\sxx(t)}\KKN$ along an evolution curve $\xx:[0,\infty)\to\KKN$ for
which a gobal stickyness condition holds, satisfies the remarkable monotonicity
property
\begin{equation}
  \label{eq:19}
  N_{\sxx(s)}\KKN\subset N_{\sxx(t)}\KKN
  \quad\text{for all $s<t$.}
\end{equation}
Consequently, for any selection $\xxi:[0,\infty)\to\R^N$ satisfying $\xxi(t)\in
N_{\sxx(t)}\KKN$ (such as $\aa_\smm(\xx(t))-\dot\vv(t)$ in \eqref{E:SODI}) we
have
$$
  \int_s^t \xxi(r)\,\d r\in N_{\sxx(t)}\KKN
  \quad \text{for all $s<t$.}
$$
An integration of \eqref{E:SODI} yields, at least formally, that
$$
  \vv(t)+N_{\sxx(t)}\KKN\ni \vv(s)+\int_s^t \aa(\xx(r))\,\d r
$$
and therefore the system \eqref{E:SODI} can be rewritten in the form
\begin{equation}
  \label{eq:18}
  \dot\xx=\vv,\quad \vv+N_{\sxx}\KKN\ni \yy,\quad \dot \yy=\aa_\smm(\xx).
\end{equation}
Introducing new unknowns $(\xx,\yy)$, we can rewrite \eqref{eq:18} as a
first order evolution inclusion
\begin{equation}
  \label{eq:20}
  \begin{aligned}
    \dot\xx+N_{\sxx(t)}\KKN &\ni \yy\\
    \dot\yy &= \aa_\smm(\xx)
  \end{aligned}
\end{equation}
for which an existence and stability theory is available, at least
when $\aa_\smm$ is a Lipschitz map.

We will show that formulation \eqref{eq:20} enjoys interesting features and
always induces a measure-valued solution to \eqref{E:CONS2}. When the field
$\aa_\smm$ satisfies the compatibility condition \eqref{eq:9}, solutions to
\eqref{eq:20} satisfies the sticky condition, and the same property holds also
for the Euler-Poisson equation in the attractive regime. In the repulsive case,
we will see that \eqref{eq:20} is a robust formulation of condition
\eqref{eq:10}. Let us now consider the infinite-dimensional case.


\subsection{Diffuse measures and differential inclusions for Lagrangian
para\-metrizations}\label{S:DMADIFLP}

In order to deal with general measure-valued solutions of \eqref{E:CONS2}, we
had to recourse to Lagrangian coordinates, using ideas of optimal transport as
considered in \cite{NatileSavare}.


\subsubsection*{Monotone Lagrangian rearrangemens}

In this approach, the discrete set of parameters $\{1,2,\cdots,N\}$ involved in
the representation of discrete particle measures \eqref{E:DISCDATA} will be
substituted by $\Omega=(0,1)$. For every particle labeled by $\w\in \Omega$, we
will denote by $X(t,\w) \in \R$ its position at time $t$. The map $X$ can be
uniquely characterized in terms of the measure $\RHO$: it is the uniquely
determined nondecreasing and right-continuous map $X:\Omega\to\R$ such that
\begin{equation}
  \label{eq:25}
  X(\w)\LS x
  \quad\Longleftrightarrow\quad
  \w\LS \RHO\big((-\infty,x]\big)
  \quad\text{for all $x\in \R$}.
\end{equation}
Equivalently, the push-forward $X_\# \leb$ of the one-dimensional Lebesgue
measure $\leb := \LEB^1|_\Omega$ under the map $X$ equals $\RHO$. Recall that
the push-forward measure is defined by
\begin{equation}
  X_\# \leb (A) := \leb(X^{-1}(A)) 
    \quad\text{for all Borel sets $A \subset \R$.}
\label{E:PUSH}
\end{equation}
Therefore the map $X$ is the optimal transport map pushing $\leb$ forward to
$\RHO$. We refer the reader to Section~\ref{S:P} for further explanation.

In this way, to any solution $(\RHO,v)$ of \eqref{E:CONS2}, we can associate a
map $X \colon [0,\infty) \times \Omega \longrightarrow \R$ with $X(t,\cdot)$
nondecreasing and a velocity $V:[0,+\infty)\times \Omega\to \R$ such that
\begin{equation}
    X(t,\cdot)_\# \leb = \RHO(t,\cdot)
    \quad\text{for all $t\GS 0$},\quad
    V(t,\cdot)=v(t,X(t,\cdot))=\partial_t X(t,\cdot).
\label{E:TRANS}
\end{equation}
Our goal is to show that \eqref{E:CONS2} can be associated to a differential
inclusion in terms of $(X,V)$. This observation allows us to derive existence
and stability results (see Sections \ref{S:SR} and \ref{S:SGL}) for (suitably
defined) solutions of \eqref{E:CONS2}, which together with the existence of
discrete particle solutions (see Section~\ref{S:DODP}) imply a global existence
result for \eqref{E:CONS2} for general initial data; see Section~\ref{S:GE}.


\subsubsection*{Differential inclusions}

The framework of first-order differential inclusions, analogous to the
setting we already discussed for the discrete case \eqref{E:SODI}, serves as a
guiding principle for our discussion. The role of the cone $\KKN$ is now played
by the cone of optimal transport maps
\begin{equation}\label{E:CONEK}
  \K := \Big\{ X\in\L^2(\Omega) \colon \text{$X$ is nondecreasing} \Big\}
\end{equation}
in the Hilbert space $\H:=\L^2(\Omega)$. Even if in this infinite dimensional
setting the boundary of $\K$ is dense, we can still consider the normal cone
$N_X\K$ for given $X\in\K$, which is given by
\begin{equation}
  N_X\K := \Big\{ W\in\L^2(\Omega) \colon 
    \text{$\int_\Omega W \Big( \tilde{X}-X \Big) \,\d m \LS 0$
      for all $\tilde{X}\in\K$} \Big\}.
\label{E:NORCO}
\end{equation}
Again we have that $N_X\K = \partial I_\K(X)$. It can be shown that
$N_X\K=\{0\}$ if and only if the map $X$ is not strictly increasing in $\Omega$.
That is, whenever $\Omega_X \neq \varnothing$ where
\begin{equation}
  \Omega_X := \Big\{ m\in\Omega \colon
    \text{$X$ is constant in a neighborhood of $m$} \Big\}.
\label{E:OMEX}
\end{equation}
Note that $\Omega_X$ is the complement of the support of the distributional
derivative of $X$.

Consider now a family of densities $t\mapsto\RHO(t,\cdot)$ that satisfies
\eqref{E:CONS2}. Let $t\mapsto X(t,\cdot) \in \K$ be the associated family of
optimal transport maps; see \eqref{E:TRANS}. We want to interpret $X$ as a
solution of differential inclusions, similar to \eqref{E:SODI} and
\eqref{eq:20}.

Even at the continuous level, the monotonicity property \eqref{eq:19} for sticky
particle evolutions plays a crucial role. Note that the optimal transport map
$X\in\K$ takes a constant value $x\in\R$ on some interval $(\alpha,\beta)
\subset \Omega$ if the mass $\beta-\alpha$ (the Lebesgue measure of the
interval) is moving to the same location, thereby forming a Dirac measure at
$x$. Therefore sticky evolutions will be characterized as curves $t\mapsto
X(t,\cdot)$ with the property that
\begin{equation}
  \text{for any $t_1 \LS t_2$ we have $\Omega_{X(t_1)} \subset
    \Omega_{X(t_2)}$.}
\label{E:MON2}
\end{equation}
Notice that \eqref{E:MON2} implies that once a Dirac measure is formed, it may
accrete more mass over time, but it can never lose mass. It also implies the
following statement: It is not possible for mass to jump from one side of a
Dirac measure to the opposite side. Whenever mass is crossing a Dirac measure,
it gets absorbed.

A formulation via differential inclusions needs a Lagrangian expression of the
force term in \eqref{E:CONS2}. That is, we must find a map $F\colon \K
\longrightarrow \L^2(\Omega)$ with the property that
\begin{equation}
  \int_\R \psi(x) \, f[\RHO](dx) = \int_\Omega \psi(X(m)) F[X](m) \,\d m
  \quad\text{for all $\psi\in\D(\R)$,}
\label{E:RETURN}
\end{equation}
whenever $X\in\K$ and $X_\#\leb = \RHO$. We refer the reader to
Section~\ref{S:GE} for further discussion about the existence and properties of
maps $F$ satisfying \eqref{E:RETURN}. In the following, we will assume that $F$
is continuous as a map of $\K$ into $\L^2(\Omega).$

We then could expect $X$ to be a solution of a
second-order differential inclusion,
but arguing as for the discrete case \eqref{E:SODI} 
at least in the case of sticky evolutions \eqref{E:MON2}
we end up with 
\begin{equation}
  \dot{X}(t) + \partial I_\K(X(t)) \ni \bar{V} + \int_0^t F[X(s)] \,\d
  s\label{eq:59}
\end{equation}
for a.e.\ $t>0$. This formulation and its consequences is at the heart of our
argument.

It is a remarkable fact (see Theorem \ref{T:BASIC}) that solutions to
\eqref{eq:59} always parametrize measure-valued solutions to the partial
differential equation \eqref{E:CONS2}. Provided $F$ satisfies suitable continuity
properties, it will be possible to prove existence (and uniqueness, when $F$ is
Lipschitz) of solutions to \eqref{eq:59} for any initial data $(\bar X,\bar
V)\in \K\times \L^2(\Omega)$ by combining the theory of gradient flows of convex
functionals in Hilbert spaces \cite{Brezis} with suitable compactness
arguments.

When $F$ satisfies a suitable {\em sticking} condition, which is satisfied e.g.\
in the case of $\C^1$ potentials in \eqref{eq:1} and of the Euler-Poisson system
in the attractive regime, then solutions to \eqref{eq:59} form a semigroup and
have the sticky evolution property \eqref{E:MON2}. Even for general $F$ (and in
particular for the Euler-Poisson system in the repulsive regime) the
differential inclusion \eqref{eq:59} still selects a stable parametrization of
solutions to \eqref{E:CONS2}. This is somewhat surprising since the reduction
from second-order to first-order differential inclusion was motivated by the
monotonicity \eqref{E:MON2}, which typically is false without additional
assumptions on $F$.  In this Introduction we refer to such solutions as ``robust.'' 


\subsubsection*{Representation formulae for the Euler-Poisson system}

In the case of the Euler-Poisson system \eqref{eq:2} with $\sigma=0$ one can
show that the Lagrangian representation of the force $f$ is given by
\begin{equation}
  \label{eq:58}
  F[X](m)= -\lambda A(m)
  \quad\text{where}\quad
  A(m):=m-\frac 12.
\end{equation}
Note that the map $F[X]$ is {\em independent of $X$}
and \eqref{eq:59} becomes
$$
  \dot{X}(t) + \partial I_\K(X(t)) \ni \bar{V} - \lambda t A
$$
In the attractive regime when $\lambda\ge0$, an explicit representation
formula for the Lagrangian solution can be obtained (see Theorems~\ref{T:RFAE}).
In fact a careful analysis shows that the solution $X$ to \eqref{eq:59} can be
computed by solving the trivial ODE in $X$ obtained by eliminating the
$\K$-constraint:
$$
  \frac \d{\d t}\tilde X(t)=\bar V-\lambda tA
$$
(whose solution is $\tilde X(t)=\bar X+t\,\bar V-\frac{1}{2}\lambda t^2 A$),
and then projecting $\tilde X$ on $\K$:
$$
  X(t) = \PROJ{\K}(\tilde X(t))
    =\PROJ{\K}\bigg( \bar X+t\,\bar V-\frac{1}{2}\lambda t^2 A \bigg).
$$
Applying the characterization given in \cite{NatileSavare}, the metric
projection of $\PROJ\K$ onto $\K$ can be found by introducing the primitive
functions
$$
  \bar{\mathcal X}(m):=\int_0^m \bar X(\ell)\,\d\ell,\quad
  \bar {\mathcal V}(m):=\int_0^m \bar V(\ell)\,\d\ell,\quad
  \mathcal A(m):=\frac 12 (m^2-m),
$$
and the time evolution
$$
  \mathcal X(t,m) :=  \bar{\mathcal X}(m)+t\, \bar {\mathcal V}(m)
    -\frac{1}{2}\lambda t^2 \mathcal A(m),
$$
and then taking the derivative with respect to $m$ of the convex envelope
$\mathcal X^{**}(t,\cdot)$ of $\mathcal X(t,\cdot)$:
$$
  X(t,m)=\frac{\partial}{\partial m}\mathcal X^{**}(t,m),
$$
which defines a density $\RHO_t=X(t)_\#\leb$. It is then a simple exercise to
recover formula \eqref{eq:8} in the case $\bar X(m)\equiv \bar x,\ \bar
V(m)\equiv \bar v$, since $\mathcal X(t,m)=\mathcal X^{**}(t,m)$ and
$X(t,m)=\bar x+t\bar v+\frac{1}{2} t^2(m-\frac 12).$


\subsection{Time discrete schemes}\label{SS:TDS}

In this section, we show that the first-order differential inclusion
\eqref{eq:59} can be used to design a stable explicit numerical scheme to
compute robust solutions to \eqref{E:CONS2}. In fact, this scheme is essentially the
same as the one introduced in \cite{Brenier2} for ``order-preserving vibrating
strings'' and ``sticky particles'', with just mild modifications. For
simplicity, we concentrate on the pressureless repulsive Euler-Poisson system
with a neutralizing background
\begin{align}
\label{repulsive1}
  \partial_t \RHO+\partial_x(\RHO v)&=0,
\\
\label{repulsive2}
\partial_t(\RHO v)+\partial_x(\RHO v^2)&=-\RHO \,\partial_x
q_\RHO\qquad
  -\partial_{xx} q_\RHO=-(\RHO-1),
\end{align}
(cf.\ \eqref{eq:2} with $\lambda=-1$ and $\sigma=1$). We assume the initial
conditions to be 1-periodic in $x$ and the density $\RHO$ to have unit mean so
that the system is globally neutral and the electric potential $q_\RHO$ is
1-periodic in $x$. Note that we choose the periodic setting only for
convenience. In fact, for any non-periodic solution $\RHO$ of \eqref{E:CONS2},
one could consider the push-forward of $\RHO$ under the map $x\mapsto x-[x]$ for
all $x\in\R$, with $[x]$ the largest integer not greater than $x$. One obtains a
new density $\RHO^*$ that is concentrated on $[0,1)$ and therefore can be
extended 1-periodically to the whole real line. One can then show that $\RHO^*$
satisfies the same equation. We refer the reader to \cite{GangboTudorascu} for
details.

For smooth solutions without mass concentration, written in
mass coordinates
$$
  V(t,m)=\dot{X}(t,m)=v(t,X(t,m)),
  \quad
  \partial_m X(t,m)\RHO(t,X(t,m))=1
$$
(which requires that $\partial_mX(t,m)\GS 0$), one can show that the whole
system reduces to a collection of independent linear pendulums labeled by their
equilibrium position $m$ and subject to
\begin{equation}
\label{harmonic}
  \dot{X}(t,m)=V(t,m),
  \quad
  \dot{V}(t,m)+X(t,m)-m=0.
\end{equation}
(Notice that, due to the spatial periodicity of the initial conditions, the new
unknown $X(t,m)-m$ and $V(t,m)$ are 1-periodic in $m$.) This reduction is valid
as long as the pendulums stay ``well-ordered'' and do not cross each other,
i.e., as long as $X(t,m)$ stays monotonically nondecreasing in $m$. This
``non-crossing'' condition is not sustainable for large initial conditions and
collision generally occur in finite time. To handle sticky collisions, the
concept of robust solutions introduced in Section~\ref{S:DMADIFLP} is a good way
to obtain a well-posed mathematical model beyond collisions.

We are now ready to describe the semi-discrete scheme. Given a time step
$\tau>0$ and suitable initial data $(\bar{X},\bar{V}) =: (X_{\tau,0},
V_{\tau,0})$, we denote by $(X_{\tau,n}(m), V_{\EEE \tau,n}(m))$ the approximate
solution at time $t_n:=n\tau$, for $n=0,1,2,\ldots$, defined in two steps as
follows:
\begin{enumerate}
\item {\em Predictor step:} we first integrate the ODE (\ref{harmonic}) and get
$U_{\tau,n+1}$ and $\hat X_{\tau,n+1}$ accordingly
\begin{gather}
\label{predictor 1}
  \hat X_{\tau,n+1}(m)=m + (X_{\tau,n}(m)-m)\cos(\tau) + V_{\tau,n}(m)\sin(\tau),
\\
\label{predictor 2}
  V_{\tau,n+1}(m)=-(X_{\tau,n}(m)-m)\sin(\tau) + V_{\tau,n}(m)\cos(\tau).
\end{gather}
\item {\em Corrector step:} we rearrange $\hat X_{\tau,n+1}(m)$ in nondecreasing
order with respect to $m$ and obtain $X_{\tau,n+1}(m)$. Because of the periodic
boundary conditions, we have to perform this step with care. We rely on the
existence, for each map $m\mapsto Y(m)$ such that $Y(m)-m$ is 1-periodic and
locally Lebesgue integrable, of a unique map $m\mapsto Y^*(m)$ such that $Y^*(m)$ 
is nondecreasing in $m$ and
$$
\int_0^1 \eta(Y^*(m)) \,\d m = \int_0^1 \eta(Y(m)) \,\d m
$$
for all continuous 1-periodic function $\eta$.
\end{enumerate}

This time discrete scheme becomes a fully discrete scheme, if the initial data
$X_{\tau,0}(m)-m$ and $V_{\tau,0}(m)$ are piecewise constant on a uniform
cartesian grid with step $h$. (We just have to be careful with the corrector
step, by using a suitable sorting algorithm for periodic data.)

To illustrate the scheme, we show the numerical solutions corresponding to
initial conditions
\begin{equation}
\label{initial conditions}
  X_0(m)=m,
  \quad
  V_0(m)=4\sin(2\pi m).
\end{equation}
We use 400 equally spaced grid points $m$ (which corresonds to 400
``well-ordered'' pendulums with $m$ as equilibrium position) and $5000$ time
steps (see Figures~\ref{fig1}--\ref{fig3}):
\begin{figure}[t]
\includegraphics[scale=0.5]{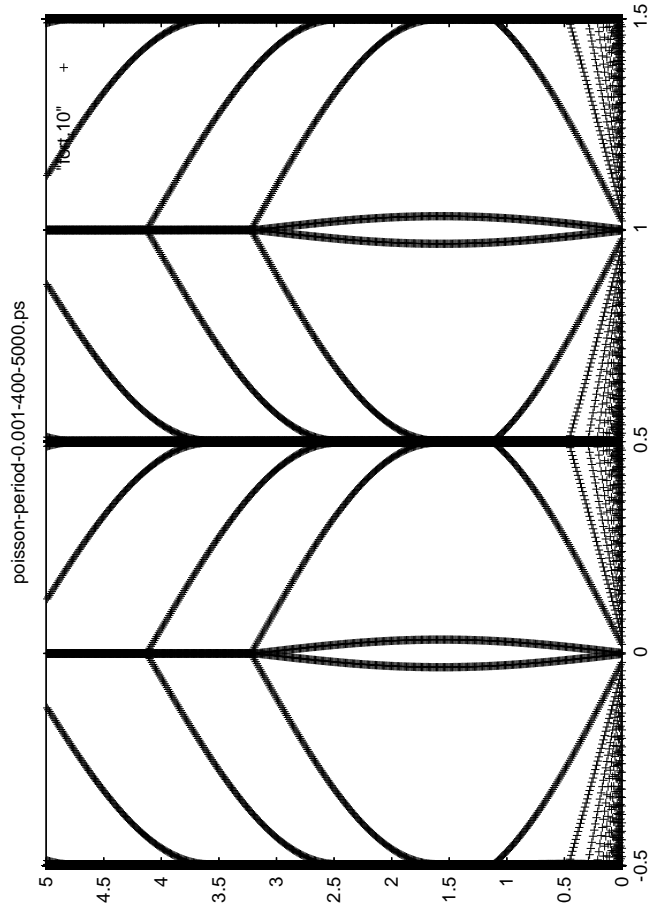}
\caption{Space-time trajectories of pendulums, with timestep $\tau=0.001$.}
\label{fig1}
\end{figure}
\begin{figure}[t]
\includegraphics[scale=0.5]{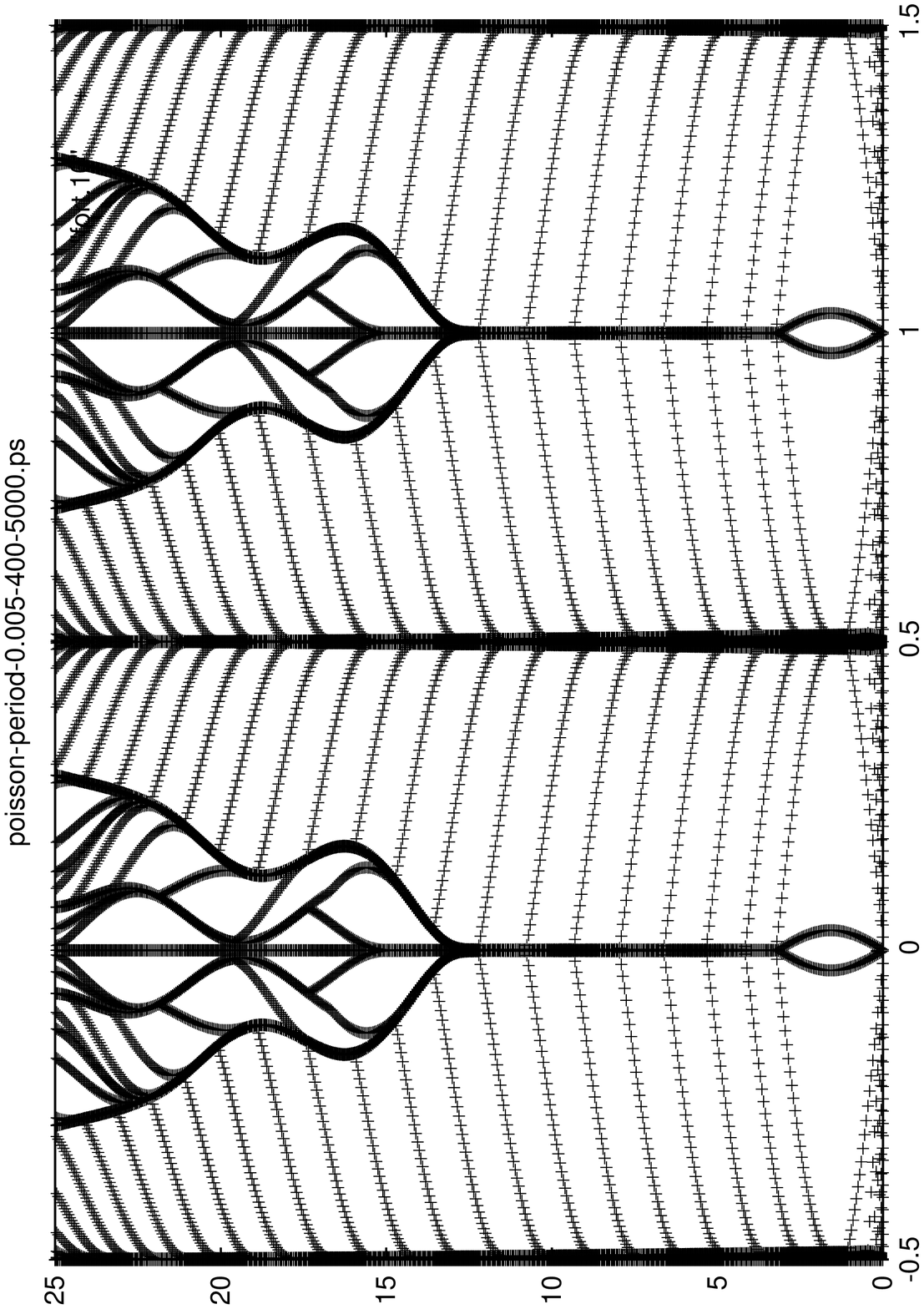}
\caption{Space-time trajectories of pendulums, with timestep $\tau=0.005$.}
\label{fig2}
\end{figure}
\begin{figure}[t]
\includegraphics[scale=0.5]{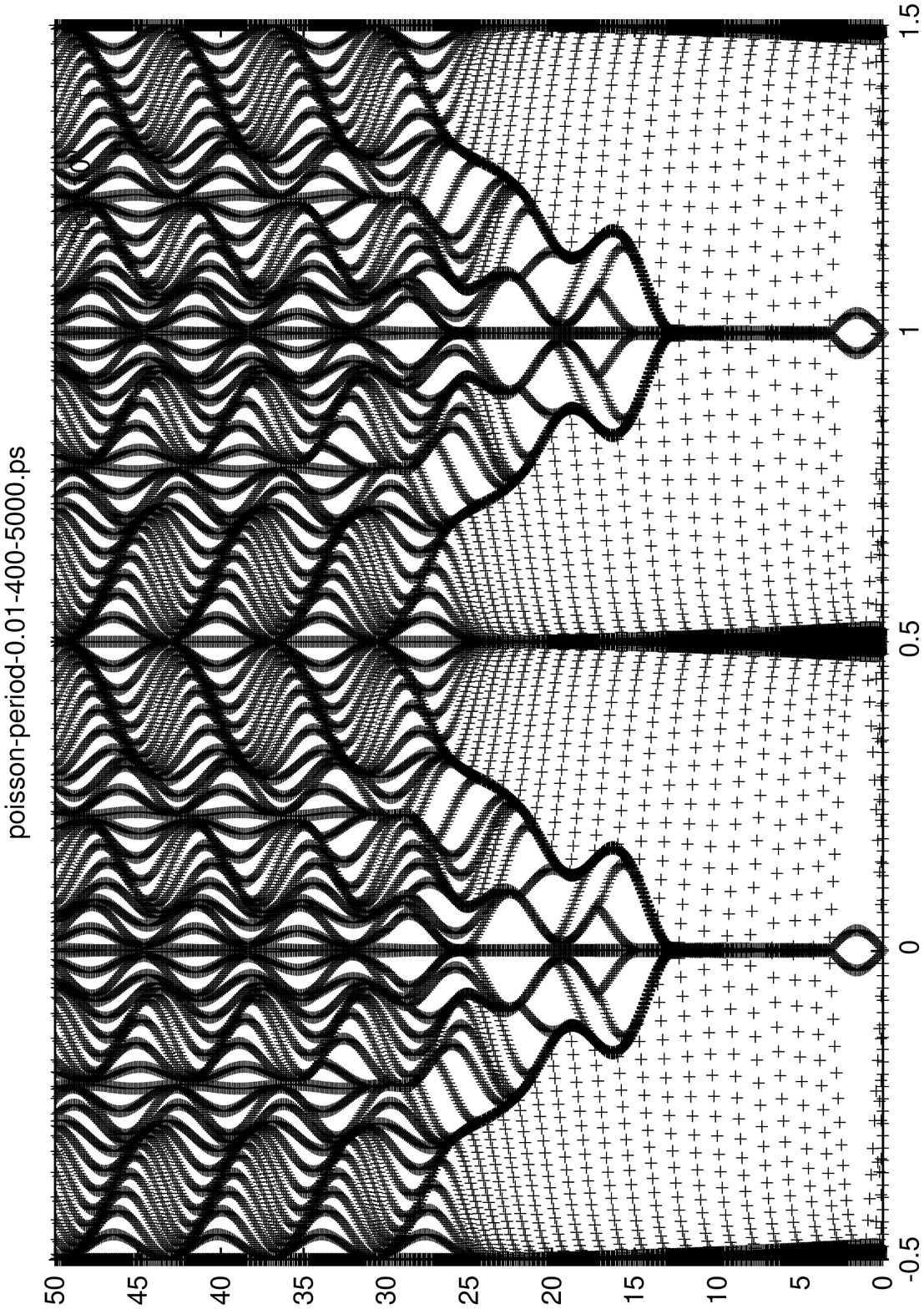}
\caption{Space-time trajectories of pendulums, with timestep $\tau=0.01$.}
\label{fig3}
\end{figure}
$$
  \tau=0.001,
  \quad
  0.005,
  \quad
  0.01,
$$
so that the final time $T$ of observation is respectively given by
$$
  T=5,
  \quad
  25,
  \quad
  50.
$$
On each picture, we show the space-time trajectories of 50 of the 400 pendulums,
with space coordinate on the horizontal axis and time coordinate on the vertical
one. On these pictures, we observe a strong concentration, with sticky
collisions, of the pendulums at a very early stage (up to time $t=\pi/2$) around
$x=0.5$. Later on, some pendulums start to unstick and detach from each other
(which allows new concentrations at later times $t\GS \pi$ around $x=0$ and
$x=1$). Much later, after $t=10\pi$, there is no further dissipation of energy,
and, as pendulums touch each other, they always do so with zero relative speed.
Then the corrector step is no longer active, and the scheme becomes exact (due
to the exact integration of the predictor step). At this late stage, the
solution becomes $2\pi$-periodic in time. We study the convergence of the scheme
in Section~\ref{S:CONV}.


\subsection{Plan of the paper}

We collect in Section~\ref{S:P} a few basic results on optimal transport in one
dimension, on convex analysis (concerning in particular the properties of the
convex cone $\K$), and on convex functionals in $\L^2(\Omega)$.

In Section~\ref{S:SR}, after a brief discussion of the basic properties of the
Lagrangian force functional $F$, we introduce the notion of Lagrangian solutions
to the differential inclusion \eqref{eq:59}. Theorem \ref{T:BASIC} collects
their main properties, in particular in connection with measure-valued solutions
to \eqref{E:CONS2}. Sections~\ref{S:EUSL} and \ref{SS:FPALS} provide the main
existence, uniqueness, and stability results for Lagrangian solutions, whereas
Section~\ref{S:SLSSP} is devoted to the particular case of sticky evolutions.

We study in Section \ref{S:SGL} a different class of solutions to \eqref{eq:59},
still linked to \eqref{E:CONS2}, that naturally arise as limit of sticky
particle systems when $F$ does not obey the sticking condition. These solutions
exhibit better semigroup properties than the Lagrangian solutions introduced in
Section~\ref{S:SR}, but lack uniqueness.

Section \ref{S:DODP} we carefully study the dynamics of discrete particle
systems, which we already briefly discussed in the Introduction. Discrete
Lagrangian solutions associated to systems like \eqref{eq:20} are treated in
Section~\ref{S:DLS}, where we also show that they can be used to approximate any
continuous Lagrangian solution, as the one considered in Section \ref{S:SR}. The
sticky dynamic at the particle level is considered in \S \ref{S:SED}: the main
Theorem \ref{T:EXDISCRETE} provides the basic results, which allow us to replace
second-order with first-order evolution inclusion at the discrete level and to
get sticky evolutions for sticking forces. The particle approach is a crucial
step of our analysis, since it avoids many technical difficulties arising at the
continuous level. The general idea is to prove fine properties of the solutions
(such as the monotonicity \eqref{E:MON2} in the sticking case or a  
representation formula) at the discrete level and then to extend them to the
general case by applying suitable stability results with respect to the initial
conditions. Those are typically obtained by applying contraction estimates (in
the case when $F$ is Lipschitz) or compactness via Helly's Theorem, by
exploiting higher integrability and monotonicity of transport maps.

Section~\ref{S:GE} applies the Lagrangian formulation to \eqref{E:CONS2},
presenting some existence and stability results for solutions in the Eulerian
formalism.

In Section~\ref{S:CONV} we prove the convergence of the time discrete scheme introduced in Section~\ref{SS:TDS}.


\section{Preliminaries}\label{S:P}

Let us first gather some definitions and results that will be needed later.


\subsection{Optimal Transport}\label{SS:OT}

We denote by $\SP(\R^m)$ the space of all Borel probability measures on $\R^m$.
The push-forward $\nu := Y_\#\mu$ of a given measure $\mu\in\SP(\R^m)$
under a Borel map $Y \colon \R^m \longrightarrow \R^n$ is the measure
defined by $\nu(A) := \mu(Y^{-1}(A))$ for all Borel sets $A\subset \R^n$. We
will repeatedly use the change-of-variable formula
\begin{equation}
    \int_{\R^n} \zeta(y) \,(Y_\#\mu)(\d y)
        = \int_{\R^m} \zeta(Y(x)) \,\mu(\d x),
\label{EQ:30}
\end{equation}
which holds for all Borel maps $\zeta \colon \R^n \longrightarrow [0,\infty]$.

We denote by $\SP_2(\R^n)$ the space of all Borel
probability  measures $\RHO \in \SP(\R^n)$ with finite quadratic moment: $\int_{\R^n}
|x|^\p \,\RHO(\d x) < \infty$. The $\L^\p$ Kantorovich-Rubinstein-Wasserstein
distance $W_\p(\RHO_1,\RHO_2)$ between two measures $\RHO_1,\RHO_2\in
\SP_\p(\R^n)$ can be defined in terms of couplings, i.e.\ of probability measures
$\RRHO\in \SP(\R^n\times\R^n)$ satisfying $\pi^i_\#\RRHO = \RHO_i$ for
$i=1\ldots 2$, by the formula
\begin{equation}
    W_\p^\p(\RHO_1,\RHO_2)
        :=\min\bigg\{ \int_{\R^n\times\R^n} |x-y|^\p \,\RRHO(\d x,\d y) \colon
            \RRHO \in \SP(\R^n\times\R^n), \pi^i_\#\RRHO = \RHO_i \bigg\}.
\label{EQ:4}
\end{equation}
Here $\pi^i(x_1,x_2) := x_i$ is the projection on the $i$th coordinate. It can
be shown that there always exists an optimal transport plan $\RRHO$ for which
the $\inf$ in \eqref{EQ:4} is in fact attained. We denote by $\Gamma_\OPT
(\RHO_1,\RHO_2)$ the set of optimal transport plans.

In the one-dimensional case $n=1$, there exists a unique coupling $\RRHO \in
\Gamma_\OPT(\RHO_1,\RHO_2)$ realizing the minimum of \eqref{EQ:4} \IGNORE{and
of \eqref{EQ:97}} (at least when the cost is finite). It can be explicitly
characterized by inverting the distribution functions of $\RHO_1$ and $\RHO_2$:
for any $\RHO\in\SP(\R)$ we consider its cumulative distribution function,
which is defined as
\begin{equation}
    M_\RHO(x) := \RHO\big( (-\infty,x] \big)
    \quad\text{for all $x\in\R$.}
\label{E:53}
\end{equation}
Note that then $\RHO = \partial_x M_\RHO$ in $\D'(\R)$. Its monotone
rearrangement is given by
\begin{equation}
    \X_\RHO(m) := \inf\Big\{ x \colon M_\RHO(x) > m \Big\}
    \quad\text{for all $m\in \Omega$,}
\label{E:90}
\end{equation}
where $\Omega := (0,1)$. The map $X_\RHO$ is right-continuous and
nondecreasing. We have
\begin{equation}
  \leb:=\LEB^1\restr\Omega,\quad
  (X_\RHO)_\#\leb = \RHO
    \quad\text{and}\quad
    \int_\R \zeta(x) \,\RHO(\d x) = \int_\Omega \zeta(X_\RHO(m)) \,\d m
\label{EQ:6}
\end{equation}
for all Borel maps $\zeta \colon \R \longrightarrow [0,\infty]$.
In particular, we have that 
$\RHO \in \SP_\p(\R)$ if and only if $\X_\RHO \in \L^\p(\Omega)$. The
Hoeffding-Fr\'echet theorem \cite{Rachev-Ruschendorf98I}*{Section~3.1} shows
that the joint map $\X_{\RHO_1,\RHO_2} \colon \Omega \longrightarrow
\R\times\R$ defined by
$$
    \X_{\RHO_1,\RHO_2}(m) := \Big( \X_{\RHO_1}(m), \X_{\RHO_2}(m) \Big)
    \quad\text{for all $m\in\Omega$,}
$$
characterizes the optimal coupling $\RRHO \in \Gamma_\OPT(\RHO_1,\RHO_2)$
by the formula 
\begin{equation}
    \RRHO = (\X_{\RHO_1,\RHO_2}\big)_\# \leb;
\label{E:146}
\end{equation}
see \cites{DallAglio56, Rachev-Ruschendorf98I, Villani} for further
information. As a consequence, we obtain that
\begin{equation}
  W_\p^\p(\RHO_1, \RHO_2) 
    = \int_\Omega |\X_{\RHO_1}(m)-\X_{\RHO_2}(m)|^\p \,\d m
    = \|X_{\RHO_1}-X_{\RHO_2}\|^\p_{\L^\p(\Omega)}.
\label{E:W2}
\end{equation}
The map $\RHO \mapsto X_\RHO$ is an isometry between $\SP_2(\R)$ and
$\K$, where $\K \subset \L^2(\Omega)$ is the set of nondecreasing functions.
Without loss of generality, we may consider precise representatives of
nondecreasing functions only, which are defined everywhere.



\subsection{Some Tools of Convex Analysis for $\K$}\label{SS:MP}

Let $\K$ be the collection of right-continuous nondecreasing functions in
$\L^2(\Omega)$ introduced in \eqref{E:CONEK}. Then one can check that $\K$ is a
closed convex cone in the Hilbert space $\L^2(\Omega)$. 

\subsubsection*{Metric Projection and Indicator Function}
 
It is well-known that the metric projection onto a nonempty closed convex
set of an Hilbert space is a well defined Lipstchitz map (see e.g.\
\cite{Zarantonello}):
we denote it by $\PROJ{\K}
\colon \L^2(\Omega) \longrightarrow \K$. For all
${X}\in\L^2(\Omega)$ it is characterized by
$$
Y=\PROJ{\K}({X}) \quad\Longleftrightarrow\quad 
Y\in \K,\quad \|Y- X\|_{\L^2(\Omega)}\LS \|\tilde Y-{X}\|_{\L^2(\Omega)}
\quad\text{for every }\tilde Y\in \K,
$$
or, equivalently, by the
following families of variational inequalities
\begin{equation}\label{E:EQUIVA2}
 Y = \PROJ{\K}({X})
  \quad\Longleftrightarrow\quad
    Y\in\K, \quad
    \int_\Omega ({X}-Y)(\tilde Y- Y) \LS 0 
      \quad\text{for all $\tilde Y\in\K$.}
\end{equation}
$\PROJ\K(X)$ admits a more explicit characterization in terms of 
the convex envelope of the primitive of $X$ \cite[Theorem
3.1]{NatileSavare}:
\begin{equation}
  \label{eq:60}
  Y=\PROJ{\K}(X)=\frac{\d^+}{\d m}\mathcal X^{**}(m),\quad
  \mathcal X(m):=\int_0^m X(\ell)\,\d\ell,
\end{equation}
where $\frac{\d^+}{\d m}$ denotes the right derivative 
and
\begin{equation}
\mathcal X^{**}(m):=\sup\Big\{a+b\,m\colon
a,b\in \R,\quad a+b\,m\LS \mathcal X(m)\quad \forall\, m\in (0,1)\Big\}\label{eq:75}
\end{equation}

is the greatest convex and l.s.c.\ function below $\mathcal X$.

\EEE
Let now $I_\K \colon \L^2(\Omega) \longrightarrow [0,+\infty]$ be the indicator
function of $\K$, defined as
$$
  I_\K(X) := \begin{cases} 
      0 & \text{if $X\in\K$,}
\\
      +\infty & \text{otherwise,}
    \end{cases}
$$
which is convex and lower semicontinuous. Its subdifferential is given by
\begin{equation}\label{eq:31}
  \partial I_\K(X) := \bigg\{ Z\in\L^2(\Omega) \colon 
    \text{$I_\K(X) \GS I_\K(X) + \int_\Omega Z(X-X)$
      for all $X \in \L^2(\Omega)$} \Big\}
\end{equation}  
and it is a maximal monotone operator in $\L^2(\Omega)$; in particular
its graph is strongly-weakly closed in $\L^2(\Omega)\times
\L^2(\Omega)$.
\EEE
Notice that $\partial I_\K(X) = \varnothing$ for all $X
\not\in \K$  since in this case $I_\K(X)=\infty$; \EEE
whenever $X\in\K$ we find that 
\begin{equation}
  \partial I_\K(X) = \bigg\{ Z\in\L^2(\Omega) \colon 
    \text{$0 \GS \int_\Omega Z(X-X)$ for all $X \in \K$} \Big\},\label{eq:61}
  \end{equation} 
so that $\partial I_\K(X)$ coincides with the normal cone
$N_X\K$ defined by \eqref{E:NORCO}.
\EEE

\eqref{E:EQUIVA2} implies the following equivalence: For all $X, Y
\in \L^2(\Omega)$ we have
\begin{equation}\label{E:EQUIVA3}
  Y = \PROJ{\K}(X) 
    \quad\Longleftrightarrow\quad
      X-Y \in \partial I_\K(Y).
\end{equation}
Decomposing $X = Y + Z$ in \eqref{E:EQUIVA3} with $Y,Z\in\L^2(\Omega)$, we
find that
\begin{equation}
  Z \in \partial I_\K(Y)
  \quad\Longleftrightarrow\quad
  Y = \PROJ{\K}(Y+Z).
\label{E:DEFPARTIAL}
\end{equation}
 
\begin{lemma}[Contraction]\label{L:CONTRACTION}
Let $\psi\colon \R\longrightarrow (-\infty,+\infty]$ be a convex, lower
semicontinuous function. For all $X_1,X_2 \in \L^2(\Omega)$ we then have 
$$
  \int_\Omega \psi(\PROJ{\K}(X_1)-\PROJ{\K}(X_2)) 
    \LS \int_\Omega \psi(X_1-X_2).
$$
In particular, the metric projection $\PROJ{\K}$ is a contraction with respect
to the $\L^p(\Omega)$-norm with $p\in[1,\infty]$
and for all $X_1,X_2 \in
\L^\p(\Omega)$ we can estimate
\begin{equation}\label{E:CONTR2}
  \|\PROJ{\K}(X_1)-\PROJ{\K}(X_2)\|_{\L^p(\Omega)}
    \LS \|X_1-X_2\|_{\L^p(\Omega)}.
\end{equation}
\end{lemma}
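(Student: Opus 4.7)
The plan is to reduce the continuous statement to a finite-dimensional one by approximation, then use the structural properties of the finite-dimensional projection (isotonic regression onto $\KKN$). First I would approximate $X_1, X_2 \in \L^2(\Omega)$ in $\L^2$ by step functions $X_1^N, X_2^N$ constant on the $N$ equal subintervals of $\Omega$. Using the explicit characterization \eqref{eq:60} via the convex envelope of the primitive---equivalently, the observation that on each maximal affine piece of $\mathcal{X}_i^{**}$ the right-derivative equals the mean of $X_i$ over the underlying interval---one sees that $\PROJ{\K}(X_i^N)$ is a step function on the same uniform partition. Hence $\PROJ{\K}$ collapses to the finite-dimensional projection onto $\KKN$ equipped with uniform weights $1/N$. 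Since $\PROJ{\K}$ is $1$-Lipschitz in $\L^2$ (a consequence of maximal monotonicity of $\partial I_\K$), we have $\PROJ{\K}(X_i^N)\to \PROJ{\K}(X_i)$ in $\L^2$, hence a.e.\ along a subsequence. Combining this with lower semicontinuity of $\psi$ and Fatou's lemma---after truncating $\psi$ from below if necessary, followed by monotone convergence---reduces the proof to the discrete inequality.

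The heart of the argument is then to show, for $y_i = \PROJ{\KKN}(x_i)$ with uniform weights,
\begin{equation*}
\sum_{k=1}^N \psi(y_1^k - y_2^k) \LS \sum_{k=1}^N \psi(x_1^k - x_2^k).
\end{equation*}
Writing any convex l.s.c.\ $\psi$ as an affine function plus an integral of ramp functions $t\mapsto(t-c)^+$ and $t\mapsto(c-t)^+$ against a positive measure, and using that $\PROJ{\KKN}$ preserves the mean---each $y_i$ is a blockwise average of $x_i$, so $\sum_k(y_1^k-y_2^k)=\sum_k(x_1^k-x_2^k)$ and the affine part cancels---the problem reduces to
\begin{equation*}
\sum_{k=1}^N (y_1^k - y_2^k - c)^+ \LS \sum_{k=1}^N (x_1^k - x_2^k - c)^+
\quad\text{for every } c\in\R,
\end{equation*}
which together with the equality of sums is exactly the Hardy--Littlewood--P\'olya majorization of $y_1-y_2$ by $x_1-x_2$.

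To establish this majorization, I would exploit the pool-adjacent-violators structure: $y_i$ partitions $\{1,\dots,N\}$ into maximal consecutive blocks on which $y_i$ equals the block mean of $x_i$, with block means nondecreasing across blocks. The main technical obstacle is that the block partitions $\mathcal{P}_1,\mathcal{P}_2$ attached to $y_1,y_2$ do not align in general, so one cannot simply take conditional expectations with respect to a common refinement and appeal directly to Jensen's inequality (which would otherwise immediately yield the bound). I would argue by induction on $N$, isolating an adjacent pair $(k,k+1)$ on which the averaging algorithm for one of the two projections acts nontrivially, performing the averaging jointly while tracking the effect on the level counts $(\,\cdot\,-c)^+$, and reducing to a problem of strictly smaller size where the induction hypothesis applies. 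The $\L^p$-contraction \eqref{E:CONTR2} follows from the main inequality by taking $\psi(t)=|t|^p$ for $p\in[1,\infty)$; the case $p=\infty$ is obtained by letting $p\to\infty$, or directly from the classical max--min characterization of isotonic regression.
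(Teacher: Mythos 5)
The paper does not actually prove this lemma itself (it refers to Theorem~3.1 of \cite{NatileSavare}), so the comparison is against a correct proof of the statement. The outer layers of your plan are sound: $\PROJ{\K}$ preserves the mean (since $\mathcal X^{**}$ agrees with $\mathcal X$ at $m=0,1$), so one may subtract an affine minorant of $\psi$ and apply Fatou to the nonnegative remainder; and the convex-envelope formula \eqref{eq:60} does show that step functions on the uniform partition project to step functions on the same partition, so the problem reduces to isotonic regression on $\KKN$ and, via Karamata, to the majorization $y_1-y_2\prec x_1-x_2$. The gap is at what you yourself call the heart of the argument. The inductive move you propose --- isolate an adjacent pair $(k,k+1)$ on which the pooling algorithm for \emph{one} of the two projections acts nontrivially and average there --- is not monotone for the majorization order. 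Take $N=2$, $x_1=(1,0)$, $x_2=(1,-1)$: pooling $x_1$ first changes the difference from $(0,1)$ to $(-0.5,1.5)$, whose largest entry exceeds $1$, so the intermediate vector is \emph{not} majorized by $(0,1)$ (and $\sum t^2$ increases from $1$ to $2.5$). One-sided pooling is only safe as a Jensen step when the pair can simultaneously be averaged in \emph{both} coordinates without altering either projection, i.e.\ when it sits strictly inside a block of both $y_1$ and $y_2$; when the two block partitions interleave so that no such common position exists, a different argument is required, and you do not supply it (nor prove that a good pooling order always exists). Note also that after one pooling the reduced problem carries non-uniform weights, so the induction hypothesis would have to be formulated for weighted isotonic regression. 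As written, the proof is incomplete at its core.

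For what it is worth, the combinatorics can be bypassed entirely using tools already stated in the paper. Set $Y_i=\PROJ{\K}(X_i)$, so that $W_i:=X_i-Y_i\in N_{Y_i}\K$ by \eqref{E:EQUIVA3}. For $\psi$ convex, $\C^1$, with bounded derivative, convexity gives pointwise
\begin{equation*}
  \psi(X_1-X_2)\;\GS\;\psi(Y_1-Y_2)+\psi'(Y_1-Y_2)\,(W_1-W_2),
\end{equation*}
so it suffices to show $\int_\Omega \psi'(Y_1-Y_2)\,W_1\,\d m\GS0$ and $\int_\Omega \psi'(Y_1-Y_2)\,W_2\,\d m\LS0$. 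By Lemma~\ref{L:NORMAL}, $W_1=\Xi'$ with $\Xi\GS0$ continuous and vanishing outside $\Omega_{Y_1}$; on each maximal interval of $\Omega_{Y_1}$ the function $\psi'(Y_1-Y_2)=\psi'(\mathrm{const}-Y_2)$ is nonincreasing, and the integration by parts used in the proof of Lemma~\ref{L:TANGENT} gives $\int \psi'(Y_1-Y_2)\,\Xi'\,\d m=-\int \Xi\,\d\big[\psi'(Y_1-Y_2)\big]\GS0$; the $W_2$ term is symmetric, and general convex l.s.c.\ $\psi$ follows by monotone approximation. This is essentially the route of \cite{NatileSavare}, and it is considerably shorter than repairing the discrete induction.
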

We refer the reader to Theorem~3.1 in \cite{NatileSavare} for a proof. Notice
that by choosing $X_2=0$ in Lemma~\ref{L:CONTRACTION}, for which
$\PROJ{\K}(X_2)=0$, we obtain the inequalities
\begin{gather}
  \int_\Omega \psi(\PROJ{\K}(X)) \LS \int_\Omega \psi(X)
  \quad\text{for all $X\in\L^2(\Omega)$,}
\label{E:ESTPSI}\\
  \vphantom{\int}
  \|\PROJ{\K}(X)\|_{\L^p(\Omega)} \LS \|X\|_{\L^p(\Omega)}
  \quad\text{for all $X\in\L^\p(\Omega)$.}
\label{E:CONTLP}
\end{gather} 
A similar result holds for the $\L^2$-orthogonal projection $\PROJ{\H_X}$ 
onto the closed subspace
$\H_X$,
$X\in \K$, defined by
\begin{equation}
  \H_X := \Big\{ U\in\L^2(\Omega) \colon 
    \text{$U$ is constant in each interval $(\alpha,\beta) \subset 
      \Omega_X$} \Big\}.
\label{E:HX}
\end{equation}
Notice that we have $\PROJ{\H_X}(V) = V$ a.e.\ in
$\Omega\setminus\Omega_X$ and
\begin{equation}
  \PROJ{\H_X}(V) = \fint_\alpha^\beta V(s) \,\d s
  \quad\text{in any maximal interval $(\alpha,\beta) \subset \Omega_X$,}
\label{E:CONSTANT}
\end{equation}
for all $V\in\L^2(\Omega)$.
Jensen's inequality then easily yields
\begin{lemma}[$\H_X$--Contraction]\label{L:CONTRACTION2}
Let $\psi :\R\to [0,\infty]$ be a convex l.s.c.\ function. 
Then for all $X \in \K$ we have 
$$
  \int_\Omega \psi(\PROJ{\H_X}(X) )\,\d m
    \LS \int_\Omega \psi(X)\,\d m.
$$
\end{lemma}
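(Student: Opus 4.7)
The plan is to observe that under the hypothesis $X\in\K$ the projection $\PROJ{\H_X}(X)$ is in fact $X$ itself, so that the asserted inequality collapses to a (trivial) equality. First I would unpack the definition~\eqref{E:OMEX} of $\Omega_X$: it is the set of $m\in\Omega$ where $X$ is constant in a neighborhood of $m$, hence an open subset of $\Omega$. Writing $\Omega_X=\bigsqcup_{i}(\alpha_i,\beta_i)$ as the countable disjoint union of its maximal open subintervals, the local constancy of $X$ together with the connectedness of each $(\alpha_i,\beta_i)$ forces $X$ to take a single constant value on every such interval.

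Second, I would compare this with the definition~\eqref{E:HX} of the closed subspace $\H_X\subset\L^2(\Omega)$: it consists precisely of those functions that are constant on every interval $(\alpha,\beta)\subset \Omega_X$. The preceding paragraph shows that $X$ itself already satisfies this condition, so $X\in\H_X$. Since the $\L^2$-orthogonal projection onto a closed subspace acts as the identity on its own range, this gives $\PROJ{\H_X}(X)=X$, in agreement with~\eqref{E:CONSTANT} (the average of a function constant on $(\alpha_i,\beta_i)$ over that same interval equals that constant). Consequently both sides of the stated inequality equal $\int_\Omega \psi(X)\,\d m$, and the conclusion holds (in fact with equality) for every convex and lower semicontinuous $\psi\colon\R\to[0,+\infty]$.

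The main (and only) conceptual point is thus the membership $X\in\H_X$; no genuine obstacle arises, and no appeal to Jensen's inequality is required for the literal statement. The remark ``Jensen's inequality then easily yields'' preceding the lemma is the natural device for the more informative companion statement in which the projection is applied to a general $V\in\L^2(\Omega)$ in place of $X$: on each maximal interval $(\alpha_i,\beta_i)\subset\Omega_X$ formula~\eqref{E:CONSTANT} gives $\PROJ{\H_X}(V)=\fint_{\alpha_i}^{\beta_i}V$, Jensen's inequality then produces $\int_{\alpha_i}^{\beta_i}\psi(\PROJ{\H_X}(V))\,\d m\LS \int_{\alpha_i}^{\beta_i}\psi(V)\,\d m$, and summing over $i$ together with the pointwise equality $\PROJ{\H_X}(V)=V$ outside $\Omega_X$ would yield the $\H_X$-analogue of~\eqref{E:ESTPSI}. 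For the literal statement as worded, however, this extra machinery is unnecessary, and the proof reduces to the membership observation above.
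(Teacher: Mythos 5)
Your proposal is correct, and its central observation is sharp: for $X\in\K$ the set $\Omega_X$ of \eqref{E:OMEX} is open and $X$ is locally constant on it, hence constant on each connected component, so $X\in\H_X$ by \eqref{E:HX}, $\PROJ{\H_X}(X)=X$, and the inequality as literally printed is a trivial equality. The paper's own (one-line) proof takes a different route, namely the Jensen argument you relegate to your last paragraph: by \eqref{E:CONSTANT} the projection leaves a function unchanged off $\Omega_X$ and replaces it by its average on each maximal interval $(\alpha,\beta)\subset\Omega_X$, and Jensen's inequality on each such interval, summed over the (at most countably many) components, gives the bound. The reason the paper argues this way is that the printed statement contains a typo: what is actually needed, and what the paper later invokes under the name of this lemma (e.g.\ in the proof of Lemma~\ref{L:VELOCITY1}, where the projection $\PROJ{\H_{X(t)}}$ is applied to $\bar V+\int_0^t Z(s)\,\d s$, and again in the proof of Theorem~\ref{T:SCHAUDER}), is the generalized statement
\[
  \int_\Omega \psi\big(\PROJ{\H_X}(V)\big)\,\d m \;\LS\; \int_\Omega \psi(V)\,\d m
  \qquad\text{for all } V\in\L^2(\Omega),
\]
with an arbitrary $V$, not $X$ itself, inside the projection; for that version your membership argument is unavailable (a general $V$ need not lie in $\H_X$) and Jensen is genuinely required. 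Since you identified this discrepancy and your sketch for the $V$-version (identity off $\Omega_X$, averaging plus Jensen on each maximal interval) coincides exactly with the paper's intended proof, your proposal covers both the literal and the intended statement; the only thing to flag is that in the paper's subsequent applications it is the $V$-version, not the trivial one, that carries the weight.
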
  
\EEE
%
%
%
For any pair of functions $X,Y\in\L^1(\Omega)$ we say that $Y$ is dominated by
$X$ and we write $Y\prec X$ if the value of each convex integral functional
on $Y$ is less than the corresponding value on $X$, i.e.
$$
Y\prec X\quad\Leftrightarrow\quad
\int_\Omega \psi(Y)\,\d m \LS \int_\Omega \psi(X)\,\d m
$$
for all convex, lower semicontinuous $\psi\colon \R \longrightarrow
[0,\infty]$. 
Estimate
\eqref{E:ESTPSI} shows that $\PROJ{\K}(X)\prec X$ for all
$X\in\L^2(\Omega)$.


\subsubsection*{Normal and Tangent Cones}
It is immediate to check that the subdifferential \eqref{eq:31} of the indicator
function $I_\K$
coincides with the normal cone
$N_X\K$ of $\K$ at $X\in\K$ defined by
\eqref{E:NORCO}.
Applying \cite[Thm.~3.9]{NatileSavare} we get the following useful
characterization:
\EEE
\begin{lemma}\label{L:NORMAL}
Let $X\in\K$ be given. For given $W\in\L^2(\Omega)$ we denote by
$$
  \Xi_W(m) := \int_0^m W(s) \,\d s
  \quad\text{for all $m\in[0,1]$,}
$$
its primitive. Then $W\in N_X\K$ if and only if $\Xi_W\in\VAN_X$, where
$$
  \VAN_X := \Big\{ \Xi\in\C([0,1]) \colon 
    \text{$\Xi\GS 0$ in $[0,1]$ and $\Xi=0$ in $\Omega\setminus\Omega_X$}
      \Big\}.
$$
\end{lemma}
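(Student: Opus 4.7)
The plan is to deduce both implications from a single integration-by-parts identity and then test against well-chosen members of $\K$. Since both $X$ and any $\tilde X\in\K$ are right-continuous and nondecreasing, their distributional derivatives $\d X$ and $\d\tilde X$ are positive finite Radon measures on $[0,1]$. Writing $W=\Xi_W'$ and applying Fubini's theorem yields the key identity
\begin{equation*}
  \int_\Omega W\,(\tilde X-X)\,\d m
  \;=\; \Xi_W(1)\,[\tilde X(1)-X(1)]
  \;-\; \int_{(0,1]}\Xi_W\,\d\tilde X
  \;+\; \int_{(0,1]}\Xi_W\,\d X,
\end{equation*}
valid for every $\tilde X\in\K$.

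For the necessity $(W\in N_X\K\Rightarrow \Xi_W\in\VAN_X)$ I first test with the constant shifts $\tilde X=X+c$, $c\in\R$: these remain in $\K$, and the identity collapses to $c\,\Xi_W(1)\LS 0$ for every $c\in\R$, forcing $\Xi_W(1)=0$. Once this is known, the defining inequality of $N_X\K$ rewrites as $\int_{(0,1]}\Xi_W\,\d\tilde X \GS \int_{(0,1]}\Xi_W\,\d X$ for every positive finite Radon measure $\d\tilde X$ (with arbitrary $\tilde X(0)\in\R$). Choosing $\d\tilde X=0$ yields $\int\Xi_W\,\d X\LS 0$; placing arbitrarily large Dirac masses near any point $m_0\in\Omega$ with $\Xi_W(m_0)<0$ would violate the inequality, so $\Xi_W\GS 0$ on $(0,1]$, and combined with the automatic $\Xi_W(0)=0$, on all of $[0,1]$. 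Finally, $\Xi_W\GS 0$ together with $\int\Xi_W\,\d X\LS 0$ and $\d X\GS 0$ forces $\int\Xi_W\,\d X=0$, so the continuous function $\Xi_W$ vanishes on $\supp\d X$, which coincides with $\Omega\setminus\Omega_X$.

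For the sufficiency $(\Xi_W\in\VAN_X\Rightarrow W\in N_X\K)$ I read the identity the other way. Continuity of $\Xi_W$ and the vanishing on $\Omega\setminus\Omega_X$ yield $\Xi_W(1)=0$ as soon as $1$ is a limit point of $\Omega\setminus\Omega_X$; in the degenerate case where $X$ is constant on a left-neighborhood of $1$, one either appends $\Xi_W(1)=0$ to the definition of $\VAN_X$ or obtains it by approximating $X$ with strictly increasing transport maps and passing to the limit. The measure $\d X$ sits in $\overline{\Omega\setminus\Omega_X}$, so $\int\Xi_W\,\d X=0$, while $\Xi_W\GS 0$ and $\d\tilde X\GS 0$ imply $\int\Xi_W\,\d\tilde X\GS 0$; the identity then produces $\int_\Omega W(\tilde X-X)\,\d m\LS 0$ for every $\tilde X\in\K$, i.e.\ $W\in N_X\K$.

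The main technical obstacle is the careful treatment of the boundary contribution at $m=1$ in the degenerate situation where $X$ is constant on a left-neighborhood of $1$, since there the hypothesis $\Xi_W=0$ on $\Omega\setminus\Omega_X$ does not by itself pin down $\Xi_W(1)$; the constant-shift test supplies this in one direction, while an approximation or a slight reformulation of $\VAN_X$ handles the other. All remaining steps are routine Fubini computations and monotone-measure bookkeeping.
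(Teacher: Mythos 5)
Your argument is sound and essentially self-contained, whereas the paper offers no proof at all for this lemma: it simply cites Theorem~3.9 of Natile--Savar\'e, so your integration-by-parts derivation is a genuine addition rather than a rederivation of something in the text. The structure you use (one integration-by-parts identity, then testing with constant shifts, with step functions $\tilde X=\lambda\CHAR_{[m_0,1)}$ whose derivative is a Dirac mass, and with $\d\tilde X=0$) is the natural direct route. Two technical points deserve more care than you give them. First, the endpoint issue you flag at $m=1$ is real and is in fact an imprecision in the statement rather than in your proof: with $\VAN_X$ read literally ($\Xi=0$ only on $\Omega\setminus\Omega_X$, $\Omega=(0,1)$ open), the ``if'' direction fails in the degenerate case --- take $X$ constant and $W\equiv1$, so that $\Xi_W(m)=m\GS0$ vacuously vanishes on $\Omega\setminus\Omega_X=\varnothing$, yet $W\notin N_X\K$ since $\int_\Omega W(\tilde X-X)\,\d m=c>0$ for $\tilde X=X+c$. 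Your first proposed fix (requiring $\Xi(1)=0$, i.e.\ reading the vanishing condition on $[0,1]\setminus\Omega_X$) is the correct one; the alternative of ``approximating $X$ by strictly increasing maps'' cannot work, because $\Xi_W(1)=\int_0^1 W$ depends only on $W$ and the stated hypothesis genuinely does not force it to vanish. Second, your key identity needs justification when $X$ or $\tilde X$ is unbounded: elements of $\K$ need only lie in $\L^2(\Omega)$, so $\tilde X(1^-)$ may be $+\infty$ and $\d X$ may have infinite total mass near the endpoints. The remedy is to integrate by parts on $[\eps,1-\eps]$ and observe that $|\Xi_W(1-\eps)|\LS\sqrt{\eps}\,\|W\|_{\L^2(\Omega)}$ by Cauchy--Schwarz while $X(1-\eps)=o(\eps^{-1/2})$ by monotonicity and square-integrability, so the boundary terms converge (to zero once $\Xi_W(0)=\Xi_W(1)=0$ is known); this is slightly more than the ``routine Fubini computation'' you claim, but it does go through and the rest of your bookkeeping with positive measures is correct.
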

That is, a function $W$ is in the normal cone $N_X\K$ if and only if it is the
derivative of a nonnegative function $\Xi$ that vanishes in $\Omega\setminus
\Omega_X$. This implies in particular that $W$ vanishes a.e.\ in $\Omega
\setminus\Omega_X$. Moreover, for any maximal interval $(\alpha, \beta)$ in the
open set $\Omega_X$ we have that $\Xi(\alpha) = \Xi(\beta) = 0$, by continuity
of $\Xi$. Thus 
\begin{equation}
  \int_\alpha^\beta W(s) \,\d s = 0
  \quad\text{for any maximal interval $(\alpha,\beta) \subset \Omega_X$.}
\label{E:AVERAGE}
\end{equation}
For later use, we also highlight the following fact: Let $X_1, X_2\in\K$. Then
\begin{equation}
  \Omega_{X_1} \subset \Omega_{X_2}
  \quad\Longrightarrow\quad
  N_{X_1}\K \subset N_{X_2}\K.
\label{E:MONO}
\end{equation}
This follows immediately from the corresponding monotonicity for $\VAN_X$.

Let us now consider the Tangent cone $T_X\K$ to $\K$ at $X\in \K$: it
can be defined as in \eqref{eq:17} by
\begin{equation}
  \label{eq:32}
  T_X \K:=\mathrm{cl}\Big(\bigcup_{\theta\ge0}\theta(\K-X)\Big)=
  \mathrm{cl}\Big\{\theta(\tilde X-X):\tilde X\in\K,\ \theta\ge0\Big\},
\end{equation}
or, equivalently, as the polar cone of $N_X\K$, i.e.
\begin{equation}
  \label{eq:33}
   T_X\K := \Big\{ U\in\L^2(\Omega) \colon
    \text{$\int_\Omega U(m) W(m) \,\d m \LS 0$ for all $W\in N_X\K$} \Big\}. 
\end{equation}
\EEE

\begin{lemma}\label{L:TANGENT}
Let $X\in\K$ be given. Then 
$$
  T_X\K = \Big\{ U\in\L^2(\Omega) \colon 
    \text{$U$ is nondecreasing in each interval $(\alpha,\beta) \subset 
      \Omega_X$} \Big\}.
$$
\end{lemma}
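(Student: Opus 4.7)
The plan is to prove both inclusions by exploiting the polar characterization \eqref{eq:33} of $T_X\K$ together with Lemma~\ref{L:NORMAL}'s concrete description of $N_X\K$. I denote by $T$ the candidate set on the right-hand side of the displayed equation.

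For $T_X\K \subset T$, I would first check that every element of the form $U = \theta(\tilde X - X)$ with $\theta\GS 0$ and $\tilde X\in\K$ belongs to $T$: on each maximal interval $(\alpha,\beta) \subset \Omega_X$ the map $X$ is constant, so $U$ coincides up to an additive constant with $\theta \tilde X$ and is therefore nondecreasing on $(\alpha,\beta)$. Since monotonicity on a fixed interval is preserved under $\L^2$--limits (pass to an a.e.\ convergent subsequence and modify on a null set), the set $T$ is closed in $\L^2(\Omega)$, and consequently the closure \eqref{eq:32} defining $T_X\K$ is contained in $T$.

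For the reverse inclusion $T \subset T_X\K$, I would fix $U\in T$ and an arbitrary $W\in N_X\K$ and show $\int_\Omega U W\,\d m\LS 0$; by \eqref{eq:33} this yields $U\in T_X\K$. Lemma~\ref{L:NORMAL} supplies a nonnegative continuous primitive $\Xi_W\in \VAN_X$ vanishing on $\Omega\setminus\Omega_X$, and testing the defining inequality of $N_X\K$ against $\tilde X = X\pm c$ for arbitrary $c\in\R$ gives $\int_0^1 W\,\d m = 0$, so in particular $\Xi_W(1)=0$. Because $W$ is supported in $\Omega_X$, the integral splits over the (at most countably many) maximal intervals $(\alpha_i,\beta_i)\subset\Omega_X$, on each of which $\Xi_W(\alpha_i)=\Xi_W(\beta_i)=0$. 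I would then replace $U$ by its truncation $U_n := \max(-n,\min(n,U))$, which remains nondecreasing on each $(\alpha_i,\beta_i)$, is bounded, and converges to $U$ in $\L^2(\Omega)$. Lebesgue--Stieltjes integration by parts then gives
$$\int_{\alpha_i}^{\beta_i} U_n W\,\d m = \big[U_n\Xi_W\big]_{\alpha_i}^{\beta_i} - \int_{(\alpha_i,\beta_i]}\Xi_W\,\d U_n = -\int_{(\alpha_i,\beta_i]}\Xi_W\,\d U_n \LS 0,$$
since $\Xi_W\GS 0$ and $\d U_n$ is a nonnegative measure. Summing in $i$ (the series is absolutely convergent by Cauchy--Schwarz on $\Omega$) and passing to the limit $n\to\infty$ by $\L^2$--duality yields $\int_\Omega UW\,\d m\LS 0$.

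The main technical obstacle is that a generic $U\in T$ is only square-integrable on each maximal interval and may blow up at the endpoints, so $U$ need not be of bounded variation up to the boundary. This is precisely why I truncate: after truncation $U_n$ is bounded and BV on each closed subinterval, the boundary terms vanish for free thanks to $\Xi_W$ vanishing at the endpoints, and the $\L^2$--approximation $U_n\to U$ closes the argument.
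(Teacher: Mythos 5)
Your proof is correct. For the inclusion $T \subset T_X\K$ (with $T$ your candidate set) you follow essentially the paper's route: split $\int_\Omega UW\,\d m$ over the maximal intervals of $\Omega_X$, note that the complement contributes nothing since $W$ vanishes there by Lemma~\ref{L:NORMAL}, integrate by parts against the nonnegative primitive $\Xi_W$, and use that the derivative of $U$ is a nonnegative measure on each component; where the paper invokes an approximation argument from \cite{NatileSavare} to justify the integration by parts for a merely square-integrable $U$, you make it explicit by truncation, which cleanly disposes of the boundary terms (and you rightly note and verify the endpoint case $\Xi_W(1)=0$). The genuine divergence is in the inclusion $T_X\K \subset T$: the paper works with the polar-cone characterization \eqref{eq:33}, testing $U$ against derivatives $\varphi'$ of nonnegative test functions supported in a component of $\Omega_X$ (these lie in $N_X\K$ by Lemma~\ref{L:NORMAL}) to conclude that the distributional derivative of $U$ there is a nonnegative Radon measure; you instead go back to the primal definition \eqref{eq:32}, observe that each generator $\theta(\tilde X - X)$ is nondecreasing on a component because $X$ is constant there, and conclude by showing that monotonicity on a fixed interval survives $\L^2$-convergence, so the right-hand side set is closed and contains the closure. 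Your variant is more elementary for this half (no distributional derivatives, no duality), at the cost of the easy closedness verification; the paper's variant has the advantage of deriving both inclusions uniformly from the single polar description of $T_X\K$ in terms of $N_X\K$.
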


More precisely, the map $U \in T_X\K$ must be nondecreasing up to Lebesgue
null sets. We may assume that $U$ is right-continuous in each $(\alpha,\beta)
\subset \Omega_X$.

\begin{proof}[Proof of Lemma~\ref{L:TANGENT}]
Let $U\in T_X\K$ be given and fix some interval $(\alpha,\beta) \subset
\Omega_X$. For all nonnegative $\varphi\in\D(\Omega)$ with $\SPT\varphi
\subset(\alpha,\beta)$ we have $\varphi' \in N_X\K$ because of
Lemma~\ref{L:NORMAL}. By definition of the tangent cone $T_X\K$ we find that
$$
  \int_\alpha^\beta U(m) \varphi'(m) \,\d m \LS 0
  \quad\text{for all nonnegative $\varphi\in\D(\Omega)$ with
    $\SPT\varphi\subset(\alpha,\beta)$.}
$$
This shows that the distributional derivative of $U$ in $(\alpha,\beta)$ is
a nonnegative Radon measure, and so $U$ is nondecreasing in the interval.

Conversely, assume that $U\in\L^2(\Omega)$ is nondecreasing in each interval
$(\alpha,\beta)$ that is contained in $\Omega_X$. For any $W\in N_X\K$ we then
decompose the integral
\begin{equation}
  \int_\Omega U(m) W(m) \,\d m
    = \int_{\Omega\setminus\Omega_X} U(m) W(m) \,\d m 
    + \sum_n \int_{\alpha_n}^{\beta_n} U(m) W(m) \,\d m,
\label{E:SCALAR} 
\end{equation}
where the sum is over all maximal intervals $(\alpha_n, \beta_n) \subset
\Omega_X$ (at most countably many). Then the first integral on the right-hand
side vanishes because $W(m) = 0$ for a.e.\ $m\in\Omega\setminus\Omega_X$. For
each integral in the sum, an approximation argument (see again Lemma~3.10 in
\cite{NatileSavare}) allows us to integrate by parts to obtain
$$
  \int_{\alpha_n}^{\beta_n} U(m) W(m) \,\d m
    = -\int_{\alpha_n}^{\beta_n} \Xi_W(s) \gamma(ds),
$$
where $\gamma$ is the distributional derivative of $U$ in $(\alpha_n,
\beta_n)$. Since $U$ is assumed nondecreasing and $\Xi_W$ is nonnegative, we
conclude that $U\in T_X\K$.
\end{proof}

%

Recalling Lemma \ref{L:TANGENT} it is immediate to check that 
\begin{equation}
  \label{eq:30}
  U\in \H_X\quad\Longleftrightarrow\quad
  U\text{ and } -U\ \in T_X\K.
\end{equation}
\EEE
Observe that if $X_1,X_2\in\K$ then
\begin{equation} \label{E:INCLUSION-SPACE}
  \Omega_{X_1} \subset \Omega_{X_2}
  \quad\Longrightarrow\quad
  \H_{X_2} \subset \H_{X_1}.
\end{equation}
Whenever $U\in\H_X$, then \eqref{E:SCALAR}
equals zero because every term in the sum vanishes since $U$ is constant and $W$
has vanishing average. Thus
\begin{equation}
  N_X\K \subset \H_X^\perp
  \quad\text{for all $X\in\K$,}
\label{E:ORTHO}
\end{equation}
with $\H_X^\perp$ the orthogonal complement of $\H_X$. 
In particular
\begin{equation}
  \label{eq:70}
  Y=\PROJ{\K}(X)\quad\Longrightarrow\quad
  Y=\PROJ{\H_Y}(X),\quad
  \H_Y\subset \H_X.
\end{equation}
We have in fact a more precise characterization of $\H^\perp_X$ in
terms of $N_X\K$: in the following, let us denote by $\mathscr{I}(\Omega_X)$ the collection of all
maximal intervals $(\alpha,\beta)$ (the connected components) of $\Omega_X$.
\begin{lemma}
  \label{L:LINEAR}
  For every $X\in \K$ the closed subspace $\H^\perp_X$ is
  \begin{equation}
    \label{eq:29}
    \begin{aligned}
      \H^\perp_X=\Big\{W\in \L^2(\Omega)&:W=0\ \text{a.e.\ in
      }\Omega\setminus\Omega_X,\\& \int_\alpha^\beta W(m)\,\d  m=0\quad
      \text{for every }(\alpha,\beta)\in \mathscr I(\Omega_X)\Big\},
    \end{aligned}
  \end{equation}
  and it is the closed linear subspace of $\L^2(\Omega)$ generated by
  $N_X\K$.
  Moreover, it admits the equivalent characterization
 \begin{equation}
  \H_X^\perp = \Big\{ W\in\L^2(\Omega) \colon
      \text{$\int_\Omega W(m) \varphi(X(m)) \,\d m = 0$ for all
        $\varphi\in\C_b(\R)$} \Big\}.\label{eq:35}
          \end{equation}
\end{lemma}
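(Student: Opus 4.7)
The plan is to establish the three characterizations in sequence: first \eqref{eq:29}, then the closed-linear-span identity, and finally \eqref{eq:35}. Throughout I write $S := \Omega\setminus\Omega_X$, so that $\Omega_X = \bigsqcup_n (\alpha_n, \beta_n)$ with $X \equiv x_n$ on each component $(\alpha_n, \beta_n) \in \mathscr{I}(\Omega_X)$.

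To prove \eqref{eq:29}, denote its right-hand side by $M$. For $M \subset \H_X^\perp$, any $U \in \H_X$ is constant (say $= c_n$) on each $(\alpha_n, \beta_n)$, and splitting $\Omega = S \sqcup \bigsqcup_n (\alpha_n, \beta_n)$ gives
\[
\int_\Omega W U \,\d m = \int_S W U \,\d m + \sum_n c_n \int_{\alpha_n}^{\beta_n} W \,\d m = 0
\]
for every $W \in M$. For the reverse inclusion, given $W \in \H_X^\perp$ I test against $\mathbbm{1}_A \in \H_X$ for Borel $A \subset S$ (which forces $W = 0$ a.e.\ on $S$) and against $\mathbbm{1}_{(\alpha_n,\beta_n)} \in \H_X$ (which forces $\int_{\alpha_n}^{\beta_n} W \,\d m = 0$).

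For the span identity, the inclusion $\overline{\mathrm{span}}(N_X\K) \subset \H_X^\perp$ is immediate from \eqref{E:ORTHO}. For the reverse, given $W \in \H_X^\perp$ I consider its primitive $\Xi_W(m) := \int_0^m W(s) \,\d s$: using \eqref{eq:29} and the identity $\int_0^1 W \,\d s = 0$ obtained by summing, $\Xi_W$ is continuous and vanishes on $\overline{S} \cup \{0,1\}$, in particular $\Xi_W(\alpha_n) = \Xi_W(\beta_n) = 0$ for every component. Decomposing $\Xi_W = \Xi_W^+ - \Xi_W^-$ into positive and negative parts, each $\Xi_W^\pm$ is continuous, nonnegative, vanishes on $\Omega \setminus \Omega_X$, and remains in $W^{1,2}(\Omega)$ because truncation at zero preserves Sobolev regularity. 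Therefore $\Xi_W^\pm \in \VAN_X$, and Lemma~\ref{L:NORMAL} gives $(\Xi_W^\pm)' \in N_X\K$, exhibiting $W = (\Xi_W^+)' - (\Xi_W^-)'$ as a difference of two elements of $N_X\K$; no closure is actually needed.

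For \eqref{eq:35}, the inclusion $\subset$ is immediate: for $\varphi \in \C_b(\R)$, $\varphi \circ X$ is bounded (hence in $\L^2(\Omega)$) and constant on each component of $\Omega_X$ because $X$ is, so $\varphi \circ X \in \H_X$. For the opposite inclusion, the hypothesis says the finite signed Borel measure $\mu := X_\#(W\,\leb)$ on $\R$ pairs to zero against every $\varphi \in \C_b(\R)$, which forces $\mu = 0$. Splitting $W\,\leb = W\,\leb|_S + W\,\leb|_{\Omega_X}$, the second piece pushes forward to the atomic measure $\sum_n c_n \delta_{x_n}$ with $c_n := \int_{\alpha_n}^{\beta_n} W \,\d m$, while the first has no atom at any $x_n$ since $X^{-1}(\{x_n\}) \cap S$ reduces to the single point $\alpha_n$. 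Uniqueness of the atomic/diffuse decomposition then forces both contributions to vanish: $c_n = 0$ and $X_\#(W\,\leb|_S) = 0$. Combined with essential injectivity of $X|_S$---if $X(m_1) = X(m_2)$ for $m_1 < m_2$ in $S$, monotonicity forces $X$ constant on $[m_1,m_2]$ and maximality of components pins $m_1,m_2$ as endpoints of a single $(\alpha_n,\beta_n)$, so $X|_S$ is injective off a countable (null) set---this yields $W = 0$ a.e.\ on $S$, and then \eqref{eq:29} gives $W \in \H_X^\perp$. The main technical hurdle is this last deduction, relying on pushforward under a non-strictly-monotone map; the first two parts are essentially bookkeeping built on top of Lemma~\ref{L:NORMAL} and the structure of $\H_X$.
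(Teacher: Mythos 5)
Your proposal is correct, but on two of the three claims it follows a genuinely different route from the paper's proof. For \eqref{eq:29} both arguments amount to the same direct verification against indicators (the paper simply declares it immediate from \eqref{E:HX}). For the span identity, the paper argues by duality: the orthogonal complement of $N_X\K$ consists of those $U$ with $\pm U$ in the polar cone $T_X\K$, which by \eqref{eq:30} is exactly $\H_X$, and taking complements again gives the claim. Your argument is instead constructive: writing $W=(\Xi_W^+)'-(\Xi_W^-)'$ with the truncated primitives lying in $\VAN_X$ and invoking Lemma~\ref{L:NORMAL}, you show the sharper fact that $\H_X^\perp=N_X\K-N_X\K$ with no closure needed (the only points to keep explicit are that truncation keeps $\Xi_W^\pm$ in $W^{1,2}$ with $|(\Xi_W^\pm)'|\LS|W|$, and that $W=\Xi_W'$ vanishes a.e.\ on $\{\Xi_W=0\}$ so the difference of derivatives really recovers $W$). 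For \eqref{eq:35}, the paper writes any $U\in\H_X$ as $u\circ X$ with $u\in\L^2(\R,\RHO)$, $\RHO=X_\#\leb$, and approximates $u$ by $\C_b(\R)$ functions using the isometry $\|\varphi\circ X-u\circ X\|_{\L^2(\Omega)}=\|\varphi-u\|_{\L^2(\R,\RHO)}$; this is shorter than your pushforward-measure argument. Your route is sound --- testing against singletons $\{x_n\}$ already kills the coefficients $c_n$, and the fibers of $X$ over points other than the $x_n$ meeting $\Omega\setminus\Omega_X$ in sets of positive measure would force those fibers' interiors into $\Omega_X$, so the residual measure is atomless --- but the final step, that $X_\#(W\,\leb|_S)=0$ together with injectivity of $X|_S$ off a countable set forces $W=0$ a.e.\ on $S$, deserves one more line (either Lusin--Souslin, or an elementary argument exploiting that preimages of half-lines under the nondecreasing $X$ are intervals). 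In exchange, your approach sidesteps the paper's unproved assertion that every element of $\H_X$ has the form $u\circ X$.
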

\begin{proof}
  \eqref{eq:29} follows immediately by the definition \eqref{E:HX} of
  $\H_X$.
  \eqref{E:ORTHO} shows that the linear subspace generated by $N_X\K$
  is contained in $\H^\perp_X$; to prove the converse inclusion it is
  sufficient to check that any $U\in \L^2(\Omega)$ orthogonal to all
  the 
  elements of $\K$ is also orthogonal to $\H^\perp_X$, i.e.\ it
  belongs to $\H_X$. This is true, since if $U$ is orthogonal to
  $N_X\K$ then 
  both $U$ and $-U$ belongs to the polar cone to $N_X\K$ which is
  $T_X\K$:
  by \eqref{eq:30} we deduce that $U\in \H_X$.

  Concerning \eqref{eq:35} 
  we simply notice that
   all $U\in\H_X$ can be written as $U=u\circ X$
  for a map $u\in\L^2(\R, \RHO)$, where
  $\RHO=X_\#\leb$. Approximating $u$ in $L^2(\R,\RHO)$ by a family of 
  functions $\varphi\in \C_b(\R)$ we obtain \eqref{eq:35}.
\end{proof}
\EEE
\begin{lemma}\label{L:MINIMAL}
For any $X\in\K$ and $U\in T_{X}\K$ we have that
\begin{equation}
  (\PROJ{\H_X}-\ID) U \in N_X\K.\label{eq:36}
\end{equation}
\end{lemma}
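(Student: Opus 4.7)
The plan is to apply the characterization of the normal cone given in Lemma \ref{L:NORMAL}. Set $W := \PROJ{\H_X}(U) - U \in \L^2(\Omega)$; it suffices to show that the primitive $\Xi_W(m) := \int_0^m W(s)\,\d s$ belongs to $\VAN_X$, i.e.\ $\Xi_W \ge 0$ on $[0,1]$ and $\Xi_W \equiv 0$ on $\Omega \setminus \Omega_X$.

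The first reduction I would make is to show that $\Xi_W$ vanishes identically on $\Omega \setminus \Omega_X$. Since $\PROJ{\H_X}(U) = U$ a.e.\ on $\Omega \setminus \Omega_X$ by \eqref{E:CONSTANT}, we have $W = 0$ a.e.\ there, so $\Xi_W$ is locally constant on $\Omega \setminus \Omega_X$. Moreover, on each maximal interval $(\alpha,\beta) \in \mathscr I(\Omega_X)$ the function $\PROJ{\H_X}(U)$ is the constant $c := \fint_\alpha^\beta U$, whence
\begin{equation*}
  \int_\alpha^\beta W(s)\,\d s = (\beta-\alpha)\,c - \int_\alpha^\beta U(s)\,\d s = 0.
\end{equation*}
Together with $\Xi_W(0)=0$, this forces $\Xi_W \equiv 0$ on $\Omega \setminus \Omega_X$ and $\Xi_W(\alpha) = \Xi_W(\beta) = 0$ for every $(\alpha,\beta) \in \mathscr I(\Omega_X)$.

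The heart of the argument is then to prove $\Xi_W \ge 0$ inside each such $(\alpha,\beta)$. Using $\Xi_W(\alpha)=0$ and the explicit form of $W$ on $(\alpha,\beta)$, for $m \in (\alpha,\beta)$ I would write
\begin{equation*}
  \Xi_W(m) = (m-\alpha)\,c - \int_\alpha^m U(s)\,\d s
    = (m-\alpha)\Bigl(\,\fint_\alpha^\beta U - \fint_\alpha^m U\Bigr),
\end{equation*}
so positivity of $\Xi_W$ is equivalent to the monotonicity inequality $\fint_\alpha^m U \le \fint_\alpha^\beta U$ for all $m \in (\alpha,\beta)$. This is where the hypothesis $U \in T_X\K$ enters: by Lemma \ref{L:TANGENT}, $U$ is nondecreasing on $(\alpha,\beta)$, and the map $m \mapsto \fint_\alpha^m U(s)\,\d s$ is itself nondecreasing, since its derivative has numerator $\int_\alpha^m (U(m)-U(s))\,\d s \ge 0$. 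Hence the required inequality holds and $\Xi_W \ge 0$ on $(\alpha,\beta)$.

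Combining both parts, $\Xi_W \in \VAN_X$, and Lemma \ref{L:NORMAL} delivers $W \in N_X\K$, which is the claim \eqref{eq:36}. I do not expect any serious obstacle here; the only subtlety is the use of Lemma \ref{L:TANGENT} to ensure the monotonicity of $U$ on each component of $\Omega_X$, since without that hypothesis the positivity of $\Xi_W$ would fail.
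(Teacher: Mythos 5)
Your proof is correct, but it takes a genuinely different route from the paper's. The paper disposes of the lemma in three lines by citing Lemma~3.11 of \cite{NatileSavare}, which gives \eqref{eq:36} for $U\in\K$, then observes that $(\PROJ{\H_X}-\ID)X=0$, so the statement holds for $U-X$ with $U\in\K$, hence for all $\theta(U-X)$ with $\theta\GS 0$ since $N_X\K$ is a cone, and finally for all of $T_X\K$ by the closure in \eqref{eq:32} (using that $N_X\K$ is closed and $\PROJ{\H_X}-\ID$ is continuous). You instead give a self-contained verification through the primitive criterion of Lemma~\ref{L:NORMAL}: you check that $W:=\PROJ{\H_X}U-U$ vanishes off $\Omega_X$ and has zero average on each component, so $\Xi_W$ vanishes on $\Omega\setminus\Omega_X$, and then reduce the positivity of $\Xi_W$ on a component $(\alpha,\beta)$ to the inequality $\fint_\alpha^m U\LS\fint_\alpha^\beta U$, which follows from the monotonicity of $U$ on $(\alpha,\beta)$ guaranteed by Lemma~\ref{L:TANGENT}. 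This is essentially the content of the cited external lemma, reproved directly and extended at once to tangent vectors; what it buys is independence from \cite{NatileSavare} and no need for the closure step, at the cost of redoing a computation the paper outsources. The only point to polish is the monotonicity of $m\mapsto\fint_\alpha^m U$: your derivative computation is fine at Lebesgue points, but since $U$ is only nondecreasing up to null sets it is cleaner to note that for a nondecreasing representative one has $\fint_\alpha^m U\LS U(m-)\LS\fint_m^\beta U$, and the full average $\fint_\alpha^\beta U$ is a convex combination of the two partial averages, which gives the required inequality immediately.
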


\begin{proof} 
Lemma~3.11 in \cite{NatileSavare} shows that \eqref{eq:36} holds if
$U\in \K$. Since $\PROJ{\H_X}X-X=0$, \eqref{eq:36} holds 
for $U-X$ and, since $N_X\K$ is a cone, for arbitrary $\theta(U-X)$,
$\theta\ge0$ and $U\in \K$. We conclude recalling \eqref{eq:32}.
%
\end{proof} 
%
\begin{remark}\label{R:MINKOWSKI} If $\CC$ 
is a closed convex subset of $\L^2(\Omega)$ and $X\in L^1((0,T);X)$ with
$X(t)\in \CC$ for a.e.\ $t\in (0,T)$ then it is easy to check that
\begin{equation}
  \label{eq:37}
  \fint_0^T X(t)dt \in \CC;\quad\text{moreover}\quad
  \int_0^T X(t)dt \in \CC\quad\text{if $\CC$ is a cone.}
\end{equation}
In fact, 
applying Jensen's inequality to the indicator function of $\CC$ we get
$$
I_\CC \Bigl( \fint_0^T X(t)\,\d t \Bigr) \leq  \fint_0^T I_\CC(X(t))\,\d t =0.
$$ 
If in addition $\CC$ is a cone then $I_\CC(T X)=I_\CC(X)$ for every
$X\in \L^2(\Omega)$ and we deduce the second implication of \eqref{eq:37}.
\end{remark}
\EEE

\subsection{Convex Functions}\label{SS:CF}

In this section we recall some auxiliary results on convex functions. We are
interested in functions $\psi \colon \R \longrightarrow [0,\infty)$ that are
\begin{equation}
    \text{even, convex, of class $\C^1(\R)$, with $\psi(0)=0$,}
\label{EQ:130}
\end{equation}
and for which the homogeneous {\em doubling condition} holds:
\begin{equation}
    \text{there exists $q\GS 1$ such that $\psi(\lambda r) \LS \lambda^q
        \psi(r)$ for all $r\in\R, \lambda\GS 1$.}
\label{E:131}
\end{equation}
Notice that if condition \eqref{E:131} holds for $\psi$, then it also holds for
the map $r \mapsto \psi^\p(r)$, with exponent $qp$. Combining \eqref{EQ:130} and
\eqref{E:131}, we obtain the inequality
\begin{equation}
  \psi(r_1+r_2) \LS 2^{q-1} (\psi(r_1)+\psi(r_2))
  \quad\text{for all $r_1,r_2\in \R$.}
\label{EQ:112}
\end{equation}
We will denote by $\Psi \colon \L^1(\Omega) \longrightarrow [0,\infty]$ the
associated convex functional
\begin{equation}
    \Psi[X] := \int_\Omega \psi(X(m)) \,\d m
    \quad\text{for all $X \in \L^1(\Omega)$.}
\label{E:113}
\end{equation}

\begin{lemma}\label{LE:SUFFICIENT}
Suppose $\psi \colon \R \longrightarrow [0,\infty)$ satisfies \eqref{EQ:130}.
Then the doubling condition \eqref{E:131} holds if and only if $\psi$ has one
of the following, equivalent properties:
\begin{gather}
    \text{$r\psi'(r) \LS q \psi(r)$ for all $r>0$;}
\label{EQ:129}\\
    \text{there exists $C\GS 0$ such that $\psi(2r) \LS C\psi(r)$
        for all $r>0$.}
\label{EQ:136}
\end{gather}
\end{lemma}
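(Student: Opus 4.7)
The plan is to close the cycle \eqref{E:131} $\Rightarrow$ \eqref{EQ:136} $\Rightarrow$ \eqref{EQ:129} $\Rightarrow$ \eqref{E:131}. The first step is immediate: specializing the doubling inequality at $\lambda=2$ gives \eqref{EQ:136} with $C=2^q$.

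For the central implication \eqref{EQ:136} $\Rightarrow$ \eqref{EQ:129} I will first record a structural consequence of \eqref{EQ:130}. Since $\psi$ is even and of class $\C^1$, one has $\psi'(0)=0$, while convexity gives that $\psi'$ is nondecreasing; together these yield $\psi'\GS0$ on $[0,\infty)$ and $\psi$ itself nondecreasing there. For $r>0$, the monotonicity of $\psi'$ then permits the mean-value style estimate
\[
r\,\psi'(r)\;\LS\;\int_r^{2r}\psi'(s)\,\d s\;=\;\psi(2r)-\psi(r)\;\LS\;(C-1)\psi(r),
\]
so \eqref{EQ:129} holds with $q:=\max(C-1,1)$. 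If $C<1$, iterating $\psi(2r)\LS C\psi(r)$ together with the monotonicity of $\psi$ on $[0,\infty)$ forces $\psi\equiv 0$, which is trivial.

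The final step \eqref{EQ:129} $\Rightarrow$ \eqref{E:131} is a comparison argument. For $r>0$ fixed I consider $g(\lambda):=\psi(\lambda r)/\lambda^q$ on $(0,\infty)$. A direct computation gives
\[
g'(\lambda)\;=\;\frac{\lambda r\,\psi'(\lambda r)-q\,\psi(\lambda r)}{\lambda^{q+1}},
\]
and \eqref{EQ:129} applied at the point $\lambda r>0$ shows $g'(\lambda)\LS 0$. Hence $g$ is nonincreasing on $(0,\infty)$, so for $\lambda\GS1$ one has $g(\lambda)\LS g(1)$, i.e.\ $\psi(\lambda r)\LS\lambda^q\psi(r)$. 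Evenness of $\psi$ extends the inequality to all $r\in\R$, and $q$ may be enlarged if needed to guarantee $q\GS1$ as required by \eqref{E:131}.

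The only point that needs genuine input beyond routine manipulations is the passage \eqref{EQ:136} $\Rightarrow$ \eqref{EQ:129}: a pointwise comparison between the values of $\psi$ at $r$ and $2r$ must be upgraded into a differential inequality. The bridge is precisely the monotonicity of $\psi'$, which allows one to estimate $\int_r^{2r}\psi'(s)\,\d s$ from below by $r\,\psi'(r)$, while the right-hand side is controlled via the doubling hypothesis; every other implication in the cycle is a one-line computation.
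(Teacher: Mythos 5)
Your proof is correct and follows essentially the same route as the paper: the passage from \eqref{EQ:136} to \eqref{EQ:129} via the monotonicity of $\psi'$ (your integral bound $r\psi'(r)\LS\int_r^{2r}\psi'(s)\,\d s$ is the same convexity estimate the paper writes as $\psi'(r)\LS(\psi(2r)-\psi(r))/r$), and the passage from \eqref{EQ:129} to \eqref{E:131} via comparison with the solution of $\eta'=q\eta/s$, which you phrase equivalently as monotonicity of $\lambda\mapsto\psi(\lambda r)/\lambda^q$. Your organization as a single cycle of implications is slightly leaner than the paper's, and your explicit handling of the degenerate case $C<1$ is a welcome small addition, but the substance is identical.
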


\begin{proof}
Property \eqref{EQ:136} is a  consequence of \eqref{E:131}, and
\eqref{EQ:129} follows from
$$
    r\psi'(r) 
        = \lim_{\lambda\rightarrow 1+} \frac{\psi(\lambda r)-\psi(r)}{\lambda-1}
        \LS \lim_{\lambda\rightarrow 1+} \frac{\lambda^q-1}{\lambda-1} \psi(r)
        = q \psi(r)
$$
for all $r>0$ and $q>1$.

To prove the converse statement, we notice first that since $\psi$ is an
even, smooth function, we have that $\psi'(0)=0$ and so $\psi$ is nonnegative
and nondecreasing for all $r>0$, by convexity. Moreover, if \eqref{EQ:136}
holds, then again by convexity we find
$$
    \psi'(r) 
        \LS \frac{\psi(2r)-\psi(r)}r
        \LS (C-1)\frac{\psi(r)}r
    \quad\text{for all $r>0$.}
$$
Thus \eqref{EQ:129} holds with $q := C-1$, which must not only be a nonnegative number but must be greater than or equal to $1.$ 

Assuming now that \eqref{EQ:129} is true, we consider the Cauchy problem
\begin{equation}
    \eta'(s) = q\frac{\eta(s)}{s}
    \quad\text{for $s\in[r,\infty)$, with $\eta(r)=\psi(r)$,}
\label{EQ:133}
\end{equation}
which admits a unique solution $\eta(s) = \psi(r)(r^{-1}s)^q$ for all $s\GS
r>0$. A standard comparison estimate for solutions of ordinary differential
equation yields
\begin{equation}
    \psi(s) \LS \eta(s) = \Big(\frac{s}{r}\Big)^q \psi(r)
    \quad\text{for all $s\GS r > 0$.}
\label{EQ:134}
\end{equation}
Since $\psi$ is nondecreasing, we conclude that $q\GS 1$ and then \eqref{E:131}
follows for $r>0$. By evenness of $\psi$ and since $\psi(0)=0$, the inequality
extends to $r\LS 0$ as well.
\end{proof}

\begin{lemma}\label{L:DOUBLE}
Let $p \in [1,\infty)$ be given and supppose that the function $\eta \colon \R
\longrightarrow [0,\infty)$ satisfies \eqref{EQ:130} and
the $p$-coercivity condition
\begin{equation}
    0 < \liminf_{r\rightarrow 0+} \frac{\eta(r)}{r^\p}
    \quad\text{and}\quad
    \lim_{r\rightarrow\infty} \frac{\eta(r)}{r^\p} = \infty.
\label{EQ:139}
\end{equation}
For every $q>p$, there exists a map $\psi \colon \R \longrightarrow
[0,\infty)$ satisfying \eqref{EQ:130}/\eqref{E:131} with
\begin{equation}
    \text{$\psi(r) \LS \eta(r)$ for all $r\in\R$}
    \quad\text{and}\quad
    \lim_{r\rightarrow\infty} \frac{\psi(r)}{r^\p} = \infty.
\label{EQ:128}
\end{equation}
\end{lemma}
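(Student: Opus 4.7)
The plan is to construct $\psi$ on $[0,\infty)$ by integrating the ODE
$$
\psi'(r)\,=\,\min\bigl\{\eta'(r),\,q\psi(r)/r\bigr\},\qquad \psi(0)=0,
$$
and then to extend $\psi$ to $\R$ by evenness. Observe first that the hypotheses force $p>1$: since $\eta$ is even and $C^1$, we have $\eta'(0)=0$, whence $\eta(r)=o(r)$ at $0$; combining this with $\eta(r)\geq c_0 r^p$ near $0$ coming from \eqref{EQ:139} rules out $p\leq 1$. For $r>0$ small, the first branch of the min is selected (since for $\eta\sim c_0 r^p$ one has $r\eta'(r)/\eta(r)\to p<q$), so $\psi\equiv\eta$ on a maximal initial interval $[0,r^*)$. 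Away from $0$ the right-hand side of the ODE is locally Lipschitz in $\psi$, and $\psi\leq\eta$ prevents blow-up, so the Cauchy--Lipschitz theorem yields a global $C^1$ solution on $[0,\infty)$.

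Three properties then follow immediately from the ODE: (i) $\psi\leq\eta$, since $\psi'\leq\eta'$ and $\psi(0)=\eta(0)=0$; (ii) the doubling estimate \eqref{E:131} for exponent $q$, since $r\psi'(r)\leq q\psi(r)$ by construction, which by Lemma~\ref{LE:SUFFICIENT} is equivalent to \eqref{E:131}; (iii) $\psi$ is convex. For (iii) the argument is: on every maximal subinterval where the first branch is active, $\psi'=\eta'$ is nondecreasing; on every subinterval where the second branch is active, $\psi(r)=Cr^q$ and $\psi'(r)=qCr^{q-1}$ is nondecreasing; at any transition point $r_0$ the stopping condition $\eta'(r_0)=q\psi(r_0)/r_0$ makes $\psi'$ continuous there. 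Moreover, in the first branch the convexity of $\eta$ together with $\eta(0)=0$ gives $\psi(r)\leq\eta(r)\leq r\eta'(r)$, hence $q\psi(r)/r\leq q\eta'(r)$, which ensures that the piecewise monotonicities of $\psi'$ assemble into global monotonicity across transitions.

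For the super-$p$ growth $\psi(r)/r^p\to\infty$, the crucial bound is the following: at any transition point $r_0$, the stopping condition $\eta'(r_0)=q\psi(r_0)/r_0$ together with $\eta(r_0)\leq r_0\eta'(r_0)$ (from convexity of $\eta$ and $\eta(0)=0$) gives $\psi(r_0)\geq\eta(r_0)/q$. On a ``second branch'' segment $[r_{2k},r_{2k+1}]$ one has $\psi(r)=C_k r^q$ with $C_k=\psi(r_{2k})/r_{2k}^q\geq \eta(r_{2k})/(q r_{2k}^q)$, hence $\psi(r)/r^p\geq \eta(r_{2k})/(q r_{2k}^p)$. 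On a ``first branch'' segment $[r_{2k-1},r_{2k}]$, $\psi(r)=\eta(r)-\delta_k$ with $\delta_k=\eta(r_{2k-1})-\psi(r_{2k-1})\leq (q-1)\eta(r_{2k-1})/q\leq (q-1)\eta(r)/q$ for $r\geq r_{2k-1}$, so $\psi(r)/r^p\geq \eta(r)/(q r^p)$. As $r\to\infty$ one either remains eventually in a single branch (in which case the direct bound already yields $\psi(r)/r^p\to\infty$ via \eqref{EQ:139}) or visits infinitely many transition points escaping to $\infty$, and in either case the above estimates combined with \eqref{EQ:139} force divergence. Finally, extending by $\psi(-r):=\psi(r)$ produces an even $C^1$ convex function on $\R$ because $\psi'(0)=\eta'(0)=0$, and all properties transfer verbatim.

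The main technical obstacle is the bootstrap that simultaneously establishes convexity, $C^1$ regularity across transitions, and the lower bound driving super-$p$ growth. The single identity $\eta'(r_0)=q\psi(r_0)/r_0$ at each transition point does triple duty: it makes $\psi'$ continuous there, it matches the two monotone pieces of $\psi'$, and it produces the bound $\psi(r_0)\geq\eta(r_0)/q$ that rules out any catastrophic loss of growth when one passes from the ``$\eta$-following'' branch to the ``$r^q$-capped'' branch.
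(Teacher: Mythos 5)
Your overall strategy --- capping the logarithmic derivative at $q$ by solving $\psi'(r)=\min\{\eta'(r),\,q\psi(r)/r\}$ --- is the same as the paper's, and your treatment of convexity, of the doubling property via Lemma~\ref{LE:SUFFICIENT}, and of the super-$p$ growth (which is in fact more quantitative than the paper's short contradiction argument) would all go through \emph{once the ODE is properly initialized}. The gap is precisely there. First, the Cauchy problem $\psi'(r)=\min\{\eta'(r),q\psi(r)/r\}$ with $\psi(0)=0$ is not well posed: $\psi\equiv 0$ is a solution (since $\eta'\GS 0$ gives $\min\{\eta',0\}=0$), the right-hand side has Lipschitz constant $q/r$ in $\psi$ and so is not Lipschitz at $r=0$, and Cauchy--Lipschitz, which you only invoke ``away from $0$'', does not select the solution you want. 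Second, the argument you offer to pin the solution down near $0$ --- that the first branch is active because $r\eta'(r)/\eta(r)\to p<q$ --- is unjustified: hypothesis \eqref{EQ:139} is only a $\liminf$, not an asymptotic $\eta\sim c_0r^p$. One can build an even, convex, $\C^1$ function $\eta$ with $\eta(0)=\eta'(0)=0$, $\liminf_{r\to0+}\eta(r)/r^p>0$, and $\limsup_{r\to0+}r\eta'(r)/\eta(r)=+\infty$: let $\eta'$ be essentially constant equal to $M_{n+1}$ on $[r_{n+1},r_n)$ and ramp up steeply to $M_n$ at $r_n$, with $M_n/M_{n+1}$ large and $r_{n+1}/r_n$ small enough to preserve the $\liminf$; convexity only requires $\eta'$ to be nondecreasing. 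For such $\eta$ there is no initial interval on which $\psi\equiv\eta$ is consistent with the ODE, and everything downstream --- the positivity of $\psi$ that your growth estimates silently presuppose, and the identity $\psi'(0)=\eta'(0)=0$ used for the even $\C^1$ extension --- is left hanging.

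The paper sidesteps all of this: it first reduces to $p=1$ by writing $\eta=\bar\eta^{\,p}$ with $\bar\eta$ convex and superlinear (citing \cite{Rossi-Savare03}), mollifies so that $\delta:=\inf_{r>0}\eta'(r)>0$, \emph{prescribes} $\psi(r):=\delta r^q/q$ on $[0,1]$ (which is automatically below $\eta$, convex, $\C^1$ at the origin, and satisfies $r\psi'=q\psi$ there), and only then launches your ODE from $r=1$ with the strictly positive datum $\psi(1)=\delta/q$, where the right-hand side is genuinely Lipschitz and the solution is unique. If you replace your initialization at $r=0$ by an explicit choice of this kind, the remainder of your argument is sound.
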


\begin{proof}
By \cite{Rossi-Savare03}*{Lemma 3.7} it is not restrictive to assume that $\eta$
is of the form $\bar{\eta}^\p$ for a suitable convex function $\bar{\eta}$ with
superlinear growth, and so we may just consider the case $p=1$ (see the remark
following \eqref{E:131}). By convolution, we can assume that $\eta$ is smooth
in the open interval $(0,\infty)$, with $\delta := \inf_{r>0} \eta'(r) > 0$.

We then choose $q>1$ and we set $\psi(r) := \delta r^q / q$ for all
$r\in [0,1]$, so that 
$$
    \text{$\psi(r) \LS \eta(r)$ in $[0,1]$}
    \quad\text{and}\quad
    \psi'(1) = \delta \LS \eta'(1).
$$
For $r\GS 1$ we define $\psi$ to be the solution of the Cauchy problem
\begin{equation}
    \psi'(r) = \min\bigg\{ \eta'(r), q \frac{\psi(r)}{r} \bigg\}
    \quad\text{for $r\in[1,\infty)$, with $\psi(1) = \delta/q$.}
\label{EQ:135}
\end{equation}
Then $\psi(r) \LS \eta(r)$ for all $r\GS 0$ and $\psi$ satisfies
\eqref{EQ:129} of the previous lemma.

To prove that $\psi$ also satisfies \eqref{EQ:130}, notice first that $\psi'(0)
= 0$ since $q>1$, and that $\psi'$ is continuous at $r=1$. Hence $\psi$ can
be extended to an even $\C^1(\R)$-function. In order to check that $\psi'$ is
nondecreasing, let us first observe as a general fact that if a continuous
function $\beta$ is nondecreasing in each connected component of an open set
$A\subset \R$ that is dense in $[1,+\infty)$, then $\beta$ is nondecreasing in
$[1,+\infty)$. We apply this observation to $\beta := \psi'$ and we set
$A:=A_0\cup A_1$, where
\begin{gather*}
    A_0 := \bigg\{ r\in (1,\infty) \colon
        q\frac{\psi(r)}{r} < \eta'(r) \bigg\},
\\
    A_1 := \text{interior of } 
        \bigg\{ r\in (1,\infty) \colon
        \eta'(r) \LS q\frac{\psi(r)}{r} \bigg\}.
\end{gather*}
In each connected component of $A_0$, the function $\psi$ solves the
differential equation $\psi'(r)=q\psi(r)/r$, and so $\psi$ is of the form $cr^q$
for a suitable constant $c>0$. Therefore $\psi'$ is nondecreasing in $A_0$.
On the other hand, in each connected component of $A_1$, we have that
$\psi'(r)=\eta'(r)$ and $\eta'$ is nondecreasing, by assumption. Finally,
notice that $\psi$ is nondecreasing on the interval $[0,1]$ since $\psi(r) =
\delta r^q / q$ there. We can now apply Lemma~\ref{LE:SUFFICIENT}
to conclude that $\psi$ has the doubling property \eqref{E:131}.

It only remains to prove the second statement in \eqref{EQ:128}. Since $\eta$
has superlinear growth, its derivative $\eta'(r) \longrightarrow \infty$ as
$r\rightarrow\infty$. Assume now that $\psi(r)/r$ remains bounded as
$r\rightarrow\infty$. Then there exists a number $r_1\GS 1$ such that 
$$
    \psi'(r) = q\frac{\psi(r)}{r}
    \quad\text{for all $r \in [r_1,\infty)$;}
$$
see \eqref{EQ:135}. But this implies that $\psi'(r) = cr^q+c_0$ for all
$r\in[r_1, \infty)$ and suitable constant $c>0$ and therefore is unbounded as
$r\rightarrow\infty$. This is a contradiction. 
\end{proof}

\begin{lemma}[Compactness in $\K$]
\label{L:COMPACTNESS}
Let $\Psi$ be the integral functional defined
in \eqref{E:113} corresponding to an even, convex function $\psi \colon \R
\longrightarrow [0,\infty)$ with
\begin{equation}
    \lim_{|r|\rightarrow\infty} \frac{\psi(r)}{|r|^\p} = \infty.
\label{EQ:116}
\end{equation}
Then each sublsevel of $\Psi$
$$
    \K(\Psi,\alpha) := \{ X\in \K \colon \Psi[X] \LS \alpha \}
    \quad\text{is compact in $\L^\p(\Omega)$}\quad
\text{for every }\alpha\ge0.
$$
\end{lemma}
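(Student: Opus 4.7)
The plan is to extract a convergent subsequence from any sequence in $\K(\Psi,\alpha)$ via Helly's selection theorem, then upgrade the pointwise convergence to $\L^p(\Omega)$-convergence using the superlinear growth condition \eqref{EQ:116} (de la Vallée--Poussin's criterion for equi-integrability). Closedness of $\K(\Psi,\alpha)$ in $\L^p(\Omega)$ will follow from Fatou's lemma.

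Let me outline the steps. First, I take a sequence $(X_n) \subset \K(\Psi,\alpha)$. Since $\psi(r)/|r|^p \to \infty$ as $|r| \to \infty$, there exists a constant $C=C(\alpha)$ so that $|r|^p \LS \psi(r) + C$ for all $r \in \R$, which gives the uniform bound $\|X_n\|_{\L^p(\Omega)}^p \LS \alpha + C$. Since each $X_n$ is nondecreasing, for any $m \in (\delta, 1-\delta)$ we have
$$
    \delta |X_n(m)|^p \LS \int_{m}^{1} |X_n(s)|^p \,\d s + \int_0^{m} |X_n(s)|^p \,\d s \LS \alpha + C,
$$
so the sequence $(X_n)$ is uniformly bounded on each compact subset of $\Omega$. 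By Helly's selection theorem applied to monotone functions, there exists a subsequence (not relabeled) and a nondecreasing function $X: \Omega \to \R$ such that $X_n(m) \to X(m)$ at every continuity point of $X$, hence $\LEB^1$-a.e.\ in $\Omega$. Passing to the right-continuous representative, we have $X \in \K$.

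Next, I upgrade this pointwise convergence to strong $\L^p$-convergence via Vitali's theorem. The superlinear growth \eqref{EQ:116} is precisely the de la Vallée--Poussin criterion: since $\Psi[X_n] \LS \alpha$ and $\psi(r)/|r|^p \to \infty$, the family $(|X_n|^p)$ is uniformly integrable on $\Omega$. Combined with pointwise a.e.\ convergence, Vitali's theorem gives $X_n \to X$ in $\L^p(\Omega)$.

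Finally, Fatou's lemma (applied to $\psi(X_n) \GS 0$, which converges a.e.\ to $\psi(X)$ by continuity of $\psi$) yields
$$
    \Psi[X] = \int_\Omega \psi(X(m)) \,\d m \LS \liminf_{n\to\infty} \int_\Omega \psi(X_n(m)) \,\d m \LS \alpha,
$$
so $X \in \K(\Psi,\alpha)$. This proves that $\K(\Psi,\alpha)$ is sequentially compact in $\L^p(\Omega)$. The only subtle point, and the one that deserves some care, is the pointwise bound needed to apply Helly--- but the monotonicity of elements of $\K$ combined with the $\L^p$-bound obtained from $\Psi[X_n] \LS \alpha$ and the trivial lower bound $|r|^p \LS \psi(r) + C$ takes care of this immediately.
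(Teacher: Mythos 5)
Your proof is correct and follows essentially the same route as the paper's: a uniform $\L^p$ bound from the superlinear growth, pointwise bounds on compact subintervals via monotonicity, Helly's selection theorem, and then uniform integrability (de la Vall\'ee--Poussin/Vitali) to upgrade pointwise convergence to $\L^p$ convergence. Your closing Fatou argument showing the limit stays in the sublevel set is a small addition the paper leaves implicit, but it is needed for compactness and is correctly handled.
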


\begin{proof}
Because of \eqref{EQ:116}, the $\L^2(\Omega)$-norm of elements of
$\K(\Psi,\alpha)$ is bounded by some constant $A$ that depends on $\alpha$ and
$\psi$ only. By monotonicity, we find that
\begin{align*}
    X(w) 
        \LS \frac{1}{1-w} \int_w^1 X(m) \,\d m
        &\LS \bigg( \frac{1}{1-w} \int_w^1 |X(m)|^2 \,\d m \bigg)^{1/2}
\\
        &\LS \frac{A}{(1-w)^{1/2}}
    \quad\text{for all $w\in\Omega$,}
\end{align*}
for all $X\in\K(\Psi,\alpha)$. Analogously, we obtain a lower bound
$$
    X(w) \GS -\frac{A}{w^{1/p}}
    \quad\text{for all $w\in\Omega$.}
$$
Any sequence $\{X_n\}$ in $\K(\Psi,\alpha)$ is therefore uniformly bounded in
each compact interval $[\delta,1-\delta]$ where $\delta>0$. Applying Helly's
theorem and a standard diagonal argument we can find a subsequence (still
denoted by $\{X_n\}$ for simplicity) that converges pointwise to an element
$X\in \K$. Since $\psi$ satisfies \eqref{EQ:116}, the sequence
$\{|X_n-X|^\p\}$ is uniformly integrable and thus $X_n\longrightarrow X$ in
$\L^\p(\Omega)$.
\end{proof}

\section{Lagrangian solutions}\label{S:SR} 
As explained in the Introduction, when studying system
\eqref{E:CONS2}, one is lead to consider 
solutions to the Cauchy problem for the first-order differential
inclusion in $\L^2(\Omega)$ 
\begin{equation}\label{E:FODI}
  \dot{X}(t) + \partial I_\K(X(t)) \ni \bar{V} + \int_0^t F[X(s)] \,\d s
  \quad\text{for $t\GS 0$,}\quad
  X(0)=\lim_{t\downarrow0}X(t)=\bar X,
\end{equation}
and, possibly,
satisfying further properties.

\EEE
Before discussing \eqref{E:FODI}, we will state below the
precise assumptions on the force  operator \EEE $F$;
examples, covering the case of \eqref{eq:1} or \eqref{eq:2},
are detailed in Section~\ref{S:GE}. 

\subsection{The force operator $F$}
Let us first recall the link of the map $F\colon \K\to
\L^2(\Omega)$ with the force distribution $f\colon\SP(\R)
\longrightarrow \M(\R)$ in \eqref{E:CONS2}:
as in \eqref{E:RETURN} we will assume that 
\begin{equation}
  \label{eq:39}
    \int_\R \psi(x) \, f[\RHO](\d x) = \int_\Omega \psi(X_\RHO(m)) F[X](m) \,\d m
  \quad\text{for all $\psi\in\D(\R)$,\ $\RHO\in \SP(\R)$}
\end{equation}
recalling \eqref{eq:35} one immediately sees that $F[X]$ is uniquely
characterized by \eqref{eq:39} only when $\H_X^\perp=\{0\}$ or,
equivalently, when $\H_X=\L^2(\Omega)$ i.e. $\Omega_X=\varnothing$:
this is precisely the case when $X$ is (essentially) strictly
increasing.

One could, of course, always take the orthogonal projection of $F[X]$
onto $\H_X$ in order to characterize it starting from
\eqref{eq:39}. This procedure, however, could lead to a discontinuous
operator which would be hard to treat by the theory of first order
differential inclusions. This happens, e.g., for the (attractive or
repulsive) Euler-Poisson system. We thus prefer to 
allow for a greater flexibility in the choice of $F$ complying with \eqref{eq:39}, asking that it
is everywhere defined on $\K$ and satisfies suitable boundedness and
continuity properties.
\EEE
\begin{definition}[Boundedness]\label{D:BOUNDEDNESS}
An operator
$F \colon \K \longrightarrow \L^2(\Omega)$
is \emph{bounded} if there exists a constant $C \GS 0$
such that
\begin{equation}
\label{eq:27}
  \|F[X]\|_{\L^\p(\Omega)} \LS C_2 \Big( 1 + \|X\|_{\L^\p(\Omega)} \Big)
  \quad\text{for all $X\in\K$.}
\end{equation}
We say that $F$ is pointwise linearly bounded if there exists a
constant $C_\sfp \GS
0$ such that
\begin{equation}\label{D:LINEARBOUNDEDNESS}
  \big|F[X](m)\big | \LS C_\sfp \Big(1 + |X(m)| 
  +\|X\|_{\L^1(\Omega)}\EEE\Big)
  \quad\text{for a.e.\ $m\in\Omega$ and all $X\in\K$.}
\end{equation}
\end{definition}
Note that if $F$ is pointwise linearly bounded, then $F$ is bounded
and satisfies \eqref{eq:27} with the constant $C_2:= 2C_\sfp$.

Let us recall that a modulus of continuity is a concave continuous function $\omega
\colon [0,\infty) \longrightarrow [0,\infty)$ with the property that $0
= \omega(0)  < \omega(r)$ for all $r>0$.

\begin{definition}[Uniform continuity]\label{D:CONTINUITY}
We say that an operator $F\colon\K\longrightarrow \L^\p(\Omega)$ is 
uniformly continuous if it is bounded
as in Definition~\ref{D:BOUNDEDNESS} and there exists a modulus of
continuity $\omega$ with the property that 
\begin{equation}
  \big\| F[X_1]-F[X_2] \big\|_{\L^\p(\Omega)} 
    \LS \omega\Big( \|X_1-X_2\|_{\L^\p(\Omega)} \Big)
  \quad\text{for all $X_1,X_2 \in \K$.}
\label{E:UNFO1}
\end{equation}
We say that $F$ is 
Lipschitz continuous if it is 
uniformly continuous 
and \eqref{E:UNFO1} holds with $\omega(r) = Lr$ for all $r\GS 0$, where $L\GS 0$
is some constant.
\end{definition} 
Notice that if $F$ is uniformly continuous then it is also bounded.
Whenever a uniformly continuous $F$ is defined by \eqref{eq:39} on the convex
subset $\K_{si}$ of all the strictly increasing maps and satisfies
\eqref{E:UNFO1} in $\K_{si}$, then it admits a unique extension to
$\K$
preserving the continuity property \eqref{E:UNFO1} and 
the compatibility condition \eqref{eq:39}.

As we observed at the beginning of this section, a last property of $F$ which will play a crucial role concerns 
its behaviour on the subset $\Omega_X$ where the map $X$ is constant. \EEE
Since the force functional determines the change in velocity,  
in the framework of sticky evolution it would be natural to assume
that
\begin{displaymath}
  F[X]\in \H_X\quad \text{for every $X\in \K$}.
\end{displaymath}
 We shall see that a weaker peroperty is still sufficient to preserve
the sticky condition: it will turn particularly useful when the
attractive Euler-Poisson equation will be considered.
\EEE
\begin{definition}[Sticking]\label{D:NONSPLITTING}
The map $F\colon\K \longrightarrow \L^\p(\Omega)$ is called
sticking if for all transport maps $X,Y\in\K$ with $Y\in\H_X$ we
have
$$
  F[X] - \PROJ{\H_X}(F[X]) \in \partial I_\K(Y).
$$
\end{definition}

\subsection{Lagrangian Solutions}

Let us start by giving a suitable notion of solutions to
\eqref{E:FODI}. 

\begin{definition}[Lagrangian solutions to the differential inclusion \eqref{E:FODI}]\label{D:STRONG} 
Let $F:\K\longrightarrow L^\p(\Omega)$ be a uniformly
continuous operator and let $\bar X\in \K$ and $\bar V\in
\H=\L^2(\Omega)$
be given. 
A Lagrangian solution to \eqref{E:FODI} with
initial data $(\bar{X},\bar{V})$ is a curve $X\in
\LIP_{\rm loc}([0,\infty);\K)$ satisfying $X(0)=\bar X$ and \eqref{E:FODI} for a.e.\ $t\in
(0,\infty)$. 
\end{definition} 
By introducing the new variable 
\begin{displaymath}
  Y(t):=\bar V+\int_0^t F[X(s)]\,\d  s 
\end{displaymath}
we immediately see that \eqref{eq:28} is equivalent to the evolution
system
\begin{equation}
  \label{eq:26}
  \left\{
    \begin{aligned}
      \dot{X}(t) + \partial I_\K(X(t)) &\ni Y(t),\\
      \dot Y(t)&=F[X(t)],
    \end{aligned}
    \right.\quad \text{for $t\GS 0$,}\quad
    (X(0),Y(0))=(\bar X,\bar V),
\end{equation}
  Notice that the continuity of $F$ yields $Y\in
  \C^1([0,\infty);\L^2(\Omega))$.
We state in the following Theorem the main properties of the solution
$X$ to
\eqref{E:FODI}
\begin{theorem}
  \label{T:BASIC}
  Let $F:\L^2(\Omega)\to\K$ be a uniformly continuous operator and 
  let $(X,Y)$ be a solution to \eqref{eq:26}. 
  Then the following
  properties hold:
\begin{itemize}
  \item {\em Right-Derivative:}
\begin{equation}
  \text{The right-derivative $V := \frac{\d^+}{\d t} X$ exists for all $t\GS 0$.} \label{E:RIGHTD}
\end{equation}
%
\item {\em Minimal Selection:}
\begin{equation}
  V(t) = \Big( Y(t) - \partial I_\K(X(t)) \Big)^\circ
  \quad\text{for all $t\GS 0$,}
\label{E:MINIMAL}
\end{equation}
where $A^\circ$ denotes the unique element of minimal norm in any
closed convex set of $A\subset
\L^2(\Omega)$. 
In particular if we replace $\dot{X}(t)$ by $V(t)$ then
\eqref{E:FODI} and \eqref{eq:26} hold for all $t\GS 0$.
\item {\em Projection on the tangent cone:}
\begin{equation}
  V(t) = \PROJ{T_{X(t)}\K}\big( Y(t)\big)
  \quad\text{for all $t\GS 0$.}
\label{E:TANGENT}
\end{equation}
\item {\em Continuity of the velocity:} 
  \begin{equation}
  \text{$V$ is right-continuous for all $t\GS 0$;}\label{eq:40}
\end{equation}
in particular
\begin{equation}
  \label{eq:63}
  \lim_{t\downarrow0}V(t)=\bar V\quad\text{if and only if}\quad
  \bar V\in T_{\bar X}\K.
\end{equation}
If $\calT^0\subset (0,\infty)$ is the subset of all times at which the
map
$s\mapsto 
\|V(s)\|_{\L^2(\Omega)}$ is continuous, then $(0,\infty)\setminus
\calT^0$ is negligible and at every point of $\calT^0$ 
 $V$ is continuous and $X$ 
 is differentiable in $\L^2(\Omega)$. Setting $\RHO_t:=X(t)_\#\leb$
 there exists a unique map $v_t\in L^2(\R,\RHO_t)$ such that
\begin{equation}
  \label{eq:34}
  \dot X(t)=V(t)=\PROJ{\H_{X(t)}}(Y(t))=v_t\circ X_t\in \H_{X(t)}\quad
  \text{for every $t\in \calT^0$}.
\end{equation}
\item{\em Solution to \eqref{E:CONS2}:}
  If moreover $F$ is linked to $f$ by \eqref{eq:39},
  $\bar \RHO=\bar X_\#\mm$ and $\bar V=\bar v\circ \bar X$, 
  then 
  the couple
  $(\RHO,v)$ defined as above is a distributional solution to
  \eqref{E:CONS2}
  such that 
  \begin{equation}
    \label{eq:64}
    \lim_{t\downarrow 0}\RHO(t,\cdot)=\bar\RHO\text{ in }\SP_2(\R),\quad
    \lim_{t\downarrow0}\RHO(t,\cdot) v(t,\cdot)=
    \bar\RHO\,\bar v \text{ in }\M(\R).
  \end{equation}
\end{itemize}
\end{theorem}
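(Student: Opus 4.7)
The plan is to read the differential inclusion \eqref{eq:26} as a gradient flow of the convex indicator $I_\K$ perturbed by the $\C^1$--forcing $t\mapsto Y(t)$, and to exploit the polarity between the normal and tangent cones of $\K$. Since $F$ is uniformly continuous and $X$ is continuous, the map $Y$ belongs to $\C^1([0,\infty);\L^2(\Omega))$. Brezis' theory for $\dot X+\partial\varphi(X)\ni g$ with $g\in W^{1,1}_{\rm loc}$ and $\varphi$ convex, proper and lower semicontinuous on a Hilbert space simultaneously yields the right derivative $V(t):=\frac{\d^+}{\d t}X(t)$ at every $t\ge 0$ and the minimal--section identity $V(t)=(Y(t)-\partial I_\K(X(t)))^\circ$; this proves \eqref{E:RIGHTD}--\eqref{E:MINIMAL} and permits one to replace $\dot X(t)$ by $V(t)$ pointwise in \eqref{E:FODI}. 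The projection identity \eqref{E:TANGENT} then follows from Moreau's decomposition: $T_{X(t)}\K$ and $N_{X(t)}\K$ are mutually polar closed convex cones, so the closed convex set $Y(t)-N_{X(t)}\K$ has a unique element of minimal norm, namely $\PROJ{T_{X(t)}\K}(Y(t))$.

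The right--continuity of $V$ in \eqref{eq:40} is a standard consequence of the regularisation effect in Brezis' theory; combined with \eqref{E:TANGENT} it gives $V(0+)=\PROJ{T_{\bar X}\K}(\bar V)$, which equals $\bar V$ if and only if $\bar V\in T_{\bar X}\K$, yielding \eqref{eq:63}. To establish \eqref{eq:34} I would use that $X\in\LIP_{\rm loc}([0,\infty);\L^2(\Omega))$ is Fr\'echet differentiable at almost every $t$; at such $t$ the left derivative $\lim_{h\downarrow 0}h^{-1}(X(t)-X(t-h))$ exists and coincides with $V(t)$. Rewriting $h^{-1}(X(t-h)-X(t))=\theta(\tilde X-X(t))$ with $\theta:=1/h\ge 0$ and $\tilde X:=X(t-h)\in\K$ displays this quotient as an element of $T_{X(t)}\K$ via \eqref{eq:32}, so passing to the limit gives $-V(t)\in T_{X(t)}\K$; combined with $V(t)\in T_{X(t)}\K$ and \eqref{eq:30} this forces $V(t)\in\H_{X(t)}$. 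The Moreau decomposition of $Y(t)$ together with \eqref{E:ORTHO} then collapses to $V(t)=\PROJ{\H_{X(t)}}(Y(t))$. The set $\calT^0$ is of full measure because the energy estimate available for gradient flows expresses $\tfrac12\|V(t)\|^2$ as a locally absolutely continuous function of $t$ (involving $\int(V,F[X])\,\d s$) minus a nondecreasing dissipation term, so the jump set of $\|V\|^2$, and hence of $V$, is at most countable.

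For the Eulerian conclusion I would unfold a test function $\varphi\in\C^\infty_c([0,\infty)\times\R)$ through the change of variable $\RHO_t=X(t)_\#\leb$. The continuity equation reduces to
\begin{equation*}
  \int_0^\infty\!\!\int_\Omega\frac{\d}{\d t}\varphi(t,X(t,m))\,\d m\,\d t+\int_\Omega\varphi(0,\bar X(m))\,\d m=0,
\end{equation*}
which is immediate from $\dot X=V=v_t\circ X$ on $\calT^0$. For the momentum equation the decisive observation is that $\varphi(X(t))\in\H_{X(t)}$ for every $\varphi\in\C(\R)$ and that $N_{X(t)}\K\subset\H_{X(t)}^\perp$ by \eqref{E:ORTHO}; these give the key absorption identity
\begin{equation*}
  \int_\R \varphi\,v_t\,\d\RHO_t=\int_\Omega\varphi(X(t))V(t)\,\d m=\int_\Omega\varphi(X(t))Y(t)\,\d m,
\end{equation*}
in which the singular normal component of $Y$ disappears. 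Differentiating in time and using $\dot Y=F[X]$, the a.e.\ identity $V\in\H_{X(t)}$, and the compatibility \eqref{eq:39}, one obtains the convective term $\int_\R\partial_x\varphi\,v_t^2\,\d\RHO_t$ together with the source term $\int_\R\varphi\,\d f[\RHO_t]$; the spurious cross term $\int_\Omega\partial_x\varphi(X)V(Y-V)\,\d m$ vanishes on $\calT^0$ because $\partial_x\varphi(X)\,V\in\H_{X(t)}$ and $Y-V\in N_{X(t)}\K\subset\H_{X(t)}^\perp$. The initial limits \eqref{eq:64} then follow from continuity of $X$ in $\L^2$ and the fact that $\bar V=\bar v\circ\bar X\in\H_{\bar X}\subset T_{\bar X}\K$, so that $V(0+)=\bar V$. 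The main obstacle throughout is the a.e.\ two--sided identification $V\in\H_X$: the Lipschitz--differentiability argument above, combined with \eqref{eq:32} and \eqref{eq:30}, is the cleanest bridge I see between the one--sided information produced by the differential inclusion and the two--sided information required by both the projection formula \eqref{eq:34} and the Eulerian momentum balance.
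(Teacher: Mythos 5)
Your proposal is correct and, for most of the statement, follows the same route as the paper: Brezis' theory supplies \eqref{E:RIGHTD}, \eqref{E:MINIMAL} and \eqref{eq:40}; the polarity of $N_{X(t)}\K$ and $T_{X(t)}\K$ gives \eqref{E:TANGENT}; and your Eulerian computation is the same absorption argument (test functions composed with $X$ lie in $\H_{X(t)}$ while $Y(t)-V(t)\in N_{X(t)}\K\subset\H_{X(t)}^\perp$, so $V$ may be traded for $Y$ and the cross terms vanish). The one place where you genuinely diverge is the key identity \eqref{eq:34}. The paper obtains $\dot X(t)=\PROJ{\H_{X(t)}}(Y(t))$ by citing Remark~3.9 of \cite{Brezis} --- at a differentiability point the derivative is the orthogonal projection of $Y(t)$ onto the orthogonal complement of the linear span of $\partial I_\K(X(t))$ --- and then identifying that orthogonal complement with $\H_{X(t)}$ via Lemma~\ref{L:LINEAR}. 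You instead use a.e.\ Fr\'echet differentiability of the Lipschitz curve $X$, observe that the left difference quotients $h^{-1}(X(t-h)-X(t))$ belong to $T_{X(t)}\K$ by \eqref{eq:32}, pass to the limit to get $-V(t)\in T_{X(t)}\K$, and invoke \eqref{eq:30} to conclude $V(t)\in\H_{X(t)}$; the projection formula then drops out of $Y(t)-V(t)\in N_{X(t)}\K\subset\H_{X(t)}^\perp$. This is more elementary and self-contained (it bypasses the affine-hull remark in Brezis and most of Lemma~\ref{L:LINEAR}), and it is essentially the same device the paper deploys later, in the proof of the semigroup property (Theorem~\ref{P:SEMIGROUP}), to show $-V_-\in T_{X(s)}\K$. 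The only (harmless) price is that your argument delivers \eqref{eq:34} on a full-measure set of differentiability times rather than literally at every point of $\calT^0$; since everything downstream --- the momentum balance and the limits \eqref{eq:64} --- only uses the a.e.\ statement, nothing is lost.
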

\begin{proof}
  \eqref{E:RIGHTD}, \eqref{E:MINIMAL}, and \eqref{eq:40}
  are consequence of the general theory of
  \cite{Brezis}, Theorem 3.5; 
  \eqref{E:TANGENT} follows immediately by
  \eqref{E:MINIMAL} since $V(t)\in T_{X(t)}\K$ by \eqref{E:RIGHTD} and 
  $V(t)+N_{X(t)}\K\ni Y(t)$.
  
  Concerning \eqref{eq:34} we can apply
  the Remark 3.9 (but see also Remark 3.4) of \cite{Brezis}, which
  shows that at each differentiability point $t$ of $X$ its derivative is
  the projection of $0$ onto the affine space generated by
  $Y(t)-\partial I_\K(X(t))$, i.e.\ the orthogonal projection of
  $Y(t)$ onto the orthogonal complement of the space generated by
  $\partial I_\K(X(t))$. Recalling Lemma \ref{L:LINEAR} we get \eqref{eq:34}.

  In order to prove the last statement, 
  we use the crucial information of \eqref{eq:34} that $V(t) \in \H_{X(t)}$  for
  a.e. $t>0$, a fact which may have been noticed for the first time
  in\cite{GNT1}.
  In particular, the projected velocities
  \begin{equation}
  V^\ast(t)= P_{\H_{X(t)}} V(t)
  \quad  t \geq 0,\label{eq:38bis}
\end{equation}
  concide with $V(t)$ for  every $ t\in \calT^0$, where $\calT^0$ is a
  set of full measure in $(0,\infty)$.
  Since any element $V$ of $\H_{X(t)}$ can be written as $v\circ X$
  for a suitable Borel map $v\in \L^2(\Omega)$ 
  we deduce that there exists a Borel map 
  $v:[0,\infty) \times \R \rightarrow \R$ such that $v(t,\cdot) \in
  \L^2(\R, \RHO(t,\cdot))$ and 
\begin{equation}
  V^\ast(t,\cdot) = v(t,X(t, \cdot))
  \quad\hbox{a.e. in}\  \Omega, \quad \text{for every }t\GS 0.
\label{E:VELOC}
\end{equation} 
From Equation \eqref{eq:38bis} we also have $V(t)=v(t, X(t))$ for $t \in \calT^0.$
We then argue as follows: For all test functions $\varphi \in \D((0,T)\times \R)$ we have
\begin{align}
  & \int_0^\infty \int_\R \bigg( \partial_t\varphi(t,x) v(t,x) 
      +\partial_x\varphi(t,x) v^2(t,x) \bigg) \,\RHO(t,dx) \,\d t
\nonumber\\
   & \quad 
      = \int_0^\infty \int_\Omega \bigg( \partial_t\varphi(t,X(t,m))
        +\partial_x\varphi(t,X(t,m)) v(t,X(t,m)) \bigg) V(t,m) \,\d m \,\d t
\nonumber\\
    & \quad
      = \int_0^\infty \int_\Omega \bigg( \partial_t\varphi(t,X(t,m))
        +\partial_x\varphi(t,X(t,m)) v(t,X(t,m)) \bigg) Y(t,m) \,\d m \,\d t
 \nonumber\\
    & \quad
      = \int_0^\infty \int_\Omega \bigg( {\d \over \d t} [\varphi(t,X(t,m))]
      \bigg) Y(t,m) \,\d m \,\d t
\nonumber\\
    & \quad
      =- \int_0^\infty \int_\Omega \varphi(t,X(t,m))
          F[X(t,\cdot)](m)  \,\d m \,\d t
\label{E:MKO}
\end{align}
Applying formula \eqref{eq:39} in (\ref{E:MKO}), we
obtain 
\begin{align}
& 
   \int_0^\infty \int_\R \bigg( \partial_t\varphi(t,x) v(t,x) 
      +\partial_x\varphi(t,x)v^2(t,x) \bigg) \,\RHO(t,dx) \,\d t\nonumber\\
  &     = -\int_0^1 \int_\R \varphi(t,x) \,f[\RHO(t,\cdot)](\d x) \,\d t,
\label{E:ARG}
\end{align}
which yields the momentum equation in \eqref{E:CONS2} in distributional sense.
An (even easier) analogous argument  holds for the continuity equation. This shows that
the pair $(\RHO, v)$ defined by \eqref{eq:38bis} and \eqref{E:VELOC} is a
solution of \eqref{E:CONS2}.

The first limit of \eqref{eq:64} 
follows since $\lim_{t\downarrow0}X(t)=\bar X$ in $\L^2(\Omega)$ and 
$\bar{X}_\#\leb =:
\bar{\RHO}$.
Concerning the second limit of \eqref{eq:64} 
we have to show that  
\begin{equation}\label{E:IDENTITY-VEL}
  \int_\Omega \varphi \bar v(x) \bar \RHO(\d x)=
  \lim_{ t  \rightarrow0} \int_\Omega \varphi v(t, x) \RHO(t, dx)
\end{equation}
for every $\varphi\in \C_b(\R)$.
Since $\bar V=\bar v\circ \bar X$
we have $\bar V\in \H_{\bar X}\subset
T_{\bar X}\K$ so that $\lim_{t\downarrow0}V(t)=\bar V$ in
$\L^2(\Omega)$
and therefore
$$
\begin{aligned}
  \int_\R \varphi(x) \bar v(x) \bar \RHO(\d x)&= \int_\Omega \varphi(\bar X) \bar
  V\, \d m= 
  \lim_{t \downarrow 0}
  \int \varphi( X(t)) V(t)\,\d m\\&=
  \lim_{t \downarrow 0} \int \varphi( X(t)) V^\ast(t)\,\d m= 
  \lim_{t \downarrow 0} \int \varphi(x) v(t,x) \RHO(t,dx),
\end{aligned}
$$
where we used the fact that $V(t)-V^\ast(t)$ is perpendicular to $\H_{X(t)}$.
\end{proof}
As we already observed in the previous proof, notice that 
\eqref{eq:63} surely holds if $\bar V\in \H_{\bar X}$.
\subsection{Existence, uniqueness, and stability of Lagrangian
  solutions for Lipschitz forces}
\label{S:EUSL}
Applying the general results of \cite{Brezis} is not difficult to
prove
\begin{theorem}
\label{T:EUS}
  Let us suppose that $F:\K\to \L^2(\Omega)$ is Lipschitz. Then for 
  every $(\bar X,\bar V)\in \K\times \L^2(\Omega)$ there exists a
  unique Lagrangian solution $X$ to \eqref{E:FODI} and
  for every $T\GS 0$ there exists a constant $C_T\GS 0$
  independent of the initial data such that
  for every $t\in [0,T]$
  \begin{equation}
    \label{eq:62}
    \|X(t)\|_{\L^2(\Omega)}+\|V(t)\|_{\L^2(\Omega)}\LS C_T\Big(1+
    \|\bar X\|_{\L^2(\Omega)}+\|\bar V\|_{\L^2(\Omega)}\Big).    
  \end{equation}
  Moreover, for any $T\GS 0$ there exists a constant
  $C_T\GS 0$ with the following property: For any pair of strong Lagrangian
  solutions $X_i$ with initial data $(\bar{X}_i, \bar{V}_i)$ for $i=1,2$ we
  have that for all $t\in[0,T]$ it holds
\begin{align}
  \|X_1(t)-X_2(t)\|_{\L^\p(\Omega)}
    &\LS C_T \Big( \|\bar{X}_1-\bar{X}_2\|_{\L^\p(\Omega)}
      + \|\bar{V}_1-\bar{V}_2\|_{\L^\p(\Omega)} \Big)
\label{E:XCONTR}\\
  \int_0^T \|V_1(t)-V_2(t)\|_{\L^2(\Omega)}^2 \,\d t
    &\LS C_T \sum_{i=1\ldots 2} \Big( \|\bar{X}_i\|_{\L^2(\Omega)}
        + \|\bar{V}_i\|_{\L^2(\Omega)} \Big)
\nonumber\\
    & \quad \times \Big( \|\bar{X}_1-\bar{X}_2\|_{\L^2(\Omega)}
        + \|\bar{V}_1-\bar{V}_2\|_{\L^2(\Omega)} \Big).
\label{E:VCONTR}
\end{align}
\end{theorem}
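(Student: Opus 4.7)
The plan is to reduce \eqref{E:FODI} to the coupled first-order system \eqref{eq:26} and exploit the Brezis theory \cite{Brezis} for evolution inclusions governed by maximal monotone operators. For existence and uniqueness I would set up a Banach fixed point on $C([0,\tau];\L^2(\Omega))$ for small $\tau>0$: given a candidate curve $X$, define $Y_X(t):=\bar V+\int_0^t F[X(s)]\,\d s$, and then solve $\dot{\tilde X}(t)+\partial I_\K(\tilde X(t))\ni Y_X(t)$ with $\tilde X(0)=\bar X$; this has a unique Lipschitz solution by the standard theory of perturbed gradient flows. The Lipschitz property of $F$ combined with the $\L^2$-contractivity of the resolvent of $\partial I_\K$ (cf.\ Lemma~\ref{L:CONTRACTION}) makes $X\mapsto\tilde X$ a strict contraction on $C([0,\tau];\L^2(\Omega))$ for $\tau$ small enough, and iteration over finitely many intervals produces the global Lagrangian solution.

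For the a priori bound \eqref{eq:62}, the key observation is $\|V(t)\|_{\L^2(\Omega)}\LS \|Y(t)\|_{\L^2(\Omega)}$, which follows from \eqref{E:TANGENT} and the orthogonal Moreau decomposition for the dual cones $T_{X(t)}\K$ and $N_{X(t)}\K$. Combined with the linear bound $\|F[X]\|_{\L^2(\Omega)}\LS C(1+\|X\|_{\L^2(\Omega)})$ (implied by Lipschitzness of $F$), together with $\|X(t)\|_{\L^2(\Omega)}\LS \|\bar X\|_{\L^2(\Omega)}+\int_0^t \|V\|_{\L^2(\Omega)}$ and $\|Y(t)\|_{\L^2(\Omega)}\LS \|\bar V\|_{\L^2(\Omega)}+\int_0^t \|F[X]\|_{\L^2(\Omega)}$, Gronwall's lemma delivers \eqref{eq:62} on any $[0,T]$.

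The stability \eqref{E:XCONTR} then follows from monotonicity of $\partial I_\K$. Since $Y_i-V_i\in \partial I_\K(X_i)$ for $i=1,2$, monotonicity yields $\langle V_1-V_2,X_1-X_2\rangle\LS \langle Y_1-Y_2,X_1-X_2\rangle$, hence $\tfrac{\d}{\d t}\|X_1-X_2\|_{\L^2(\Omega)}\LS \|Y_1-Y_2\|_{\L^2(\Omega)}$. Combined with $\|\dot Y_1-\dot Y_2\|_{\L^2(\Omega)}=\|F[X_1]-F[X_2]\|_{\L^2(\Omega)}\LS L\|X_1-X_2\|_{\L^2(\Omega)}$, Gronwall applied to the pair of scalar inequalities yields \eqref{E:XCONTR}.

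The main obstacle is \eqref{E:VCONTR}, since $V$ is only BV in time and no pointwise bound on $V_1(t)-V_2(t)$ is available. The strategy is to exploit the Moreau decomposition $Y_i=V_i+n_i$ with $n_i\in N_{X_i}\K$ and $\langle V_i,n_i\rangle =0$ to write
\begin{equation*}
  \int_0^T \|V_1-V_2\|_{\L^2(\Omega)}^2\,\d t
    = \int_0^T \langle V_1-V_2,Y_1-Y_2\rangle\,\d t
    - \int_0^T \langle V_1-V_2,n_1-n_2\rangle\,\d t.
\end{equation*}
For the first integral I would integrate by parts in time using $\dot Y_i=F[X_i]$: the boundary contribution $\langle X_1(T)-X_2(T),Y_1(T)-Y_2(T)\rangle-\langle \bar X_1-\bar X_2,\bar V_1-\bar V_2\rangle$ is bounded by $\|X_1(T)-X_2(T)\|_{\L^2(\Omega)}\cdot(\|Y_1(T)\|_{\L^2(\Omega)}+\|Y_2(T)\|_{\L^2(\Omega)})$ and analogous initial data, in which the first factor is controlled by \eqref{E:XCONTR} (a pure difference) and the second by \eqref{eq:62} (a sum of norms), producing the required mixed product; the interior remainder $-\int_0^T\langle X_1-X_2,F[X_1]-F[X_2]\rangle\,\d t$ is quadratic in the difference via the Lipschitz constant of $F$ and absorbed. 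The residual integral $\int_0^T\langle V_1-V_2,n_1-n_2\rangle\,\d t$ is the subtle part; I would control it via the Brezis-type energy estimate for $W^{1,1}$-perturbations of gradient flows in \cite{Brezis}, whose specialization to $\partial I_\K$ produces after careful bookkeeping a contribution of the same mixed form $C_T\sum_i(\|\bar X_i\|_{\L^2(\Omega)}+\|\bar V_i\|_{\L^2(\Omega)})\cdot(\|\bar X_1-\bar X_2\|_{\L^2(\Omega)}+\|\bar V_1-\bar V_2\|_{\L^2(\Omega)})$. Combining the two bounds then yields \eqref{E:VCONTR}.
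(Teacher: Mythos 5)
Your treatment of existence, uniqueness, \eqref{eq:62} and \eqref{E:XCONTR} is sound, and it differs only mildly from the paper: where you run a Banach fixed point on the map $X\mapsto\tilde X$ and then apply Gronwall by hand, the paper packages the whole system \eqref{eq:26} as a Lipschitz perturbation of the subdifferential of $\Phi(X,Y):=I_\K(X)$ on the product space $\L^2(\Omega)\times\L^2(\Omega)$ and quotes \cite[Theorem 3.17]{Brezis} once to get existence, uniqueness, \eqref{eq:62} and \eqref{E:XCONTR} simultaneously. Both routes are legitimate; yours is more self-contained for these items.

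The gap is in \eqref{E:VCONTR}. After your integration by parts the entire difficulty is concentrated in the residual term $\int_0^T\langle V_1-V_2,\,n_1-n_2\rangle\,\d t=-\int_0^T\big(\langle V_1,n_2\rangle+\langle V_2,n_1\rangle\big)\,\d t$ (using $\langle V_i,n_i\rangle=0$), and this term has no sign and no elementary bound: $V_1(t)\in T_{X_1(t)}\K$ while $n_2(t)\in N_{X_2(t)}\K$, and polarity gives $\langle V_1,n_2\rangle\LS 0$ only when $X_1=X_2$. The formal manipulation $\langle \dot X_1,n_2\rangle=\frac{\d}{\d t}\langle X_1-X_2,n_2\rangle-\langle X_1-X_2,\dot n_2\rangle$ cannot be made rigorous cheaply, since $n_2=Y_2-V_2$ is not differentiable in time (it jumps at collisions). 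Your proposal defers this to ``the Brezis-type energy estimate for $W^{1,1}$-perturbations of gradient flows in \cite{Brezis}'', but Brezis' monograph contains regularization estimates for a single solution (e.g.\ $t^{1/2}\dot u\in\L^2$) and the sup-norm contraction for $u_1-u_2$, not an $\L^2$-in-time estimate for $\dot u_1-\dot u_2$ of the required mixed form (size of data)$\times$(difference of data). That sharper difference/maximal-regularity estimate is precisely what \cite[Theorem 2]{Savare96} provides, and it is the ingredient the paper invokes --- after recording the contraction \eqref{eq:43} for $Y_1-Y_2$ and the bound on $\int_0^T\|\dot Y_i\|^2_{\L^2(\Omega)}\,\d t$ --- to conclude \eqref{E:VCONTR}. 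As written, your final step asserts the conclusion of that theorem without a proof or a correct reference, so the argument for \eqref{E:VCONTR} does not close.
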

\begin{proof}
  Recalling the equivalent formulation \eqref{eq:26}, we introduce the Hilbert space $H:\L^2(\Omega)\times L^2(\Omega)$
  and the (multivalued) operator $A(X,Y):=(\partial I_\K(X)-Y,F[X])$. 
  It is easy to check that $A$ is a Lipschitz perturbation of the
  subdifferential of the proper, convex, and l.s.c.\ functional
  $\Phi(X,Y):=I_\K(X)$.
  Thus existence, uniqueness, and the estimates 
  \eqref{eq:62}, \eqref{E:XCONTR} follow by \cite[Theorem
  3.17]{Brezis}. 

  The same estimate also yields for the second component $Y_i(t)=\bar
  V_i+\int_0^tF[X_i(s)]\, \d s$
  \begin{equation}
    \label{eq:43}
    \|Y_1(t)-Y_2(t)\|_{\L^\p(\Omega)}
    \LS C_T \Big( \|\bar{X}_1-\bar{X}_2\|_{\L^\p(\Omega)}
      + \|\bar{V}_1-\bar{V}_2\|_{\L^\p(\Omega)} \Big)
  \end{equation}
  and, by the boundedness of $F$,
$$
\int_0^T \|\dot{Y}_i(t)\|^2_{\L^2(\Omega)} \,\d t
\LS C_T^2 \bigg( 1 + \|\bar{X}_i\|_{\L^\p(\Omega)}^2
+ \|\bar{V}_i\|^2_{\L^\p(\Omega)} \Big).
$$
Applying Theorem~2 in \cite{Savare96} to the first equation of
\eqref{eq:26} we get \eqref{E:VCONTR}.  
\end{proof}

A straightforward application of the previous Theorem shows that
Lagrangian solutions are stable if $F$ is Lipschitz: a
sequence of Lagrangian solutions with strongly converging initial data
converges to another Lagrangian solution.

\subsection{Sticky lagrangian
solutions and the semigroup property}
\label{S:SLSSP}
We consider here an important class of Lagrangian solutions.
\begin{definition}[Sticky lagrangian solutions]\label{def:sticky}
  We say that a Lagrangian solution $X$ is \emph{sticky}
  if
  \begin{equation}
    \text{for any $t_1 \LS t_2$ we have $\Omega_{X(t_1)} \subset
      \Omega_{X(t_2)}$.}
    \label{E:MODI1}
  \end{equation}
\end{definition}
By \eqref{E:MONO} and \eqref{E:INCLUSION-SPACE} any sticky Lagrangian
solution satisfies the monotonicity condition
\begin{equation}
  \label{eq:39bis}
  \partial I_\K(X(t_1))\subset \partial I_\K(X(t_2)),\quad
  \H_{X(t_2)}\subset \H_{X(t_1)}\qquad
  \text{for any $t_1 \LS t_2$}.
\end{equation}
The nice features of sticky Lagrangian solutions are summarized in the
next results.
\begin{proposition}[Projection formula]
\label{P:PROJECTION} 
If $X$ is a sticky Lagrangian solution then
\begin{equation}
  V(t) \in \H_{X(t)}
  \quad\text{for all times $t\GS 0$}
\label{E:VELOCITY}
\end{equation}
and it satisfies
\begin{gather}
  X(t) = \PROJ{\K}\bigg(\bar X+\int_0^t Y(s)\,\d s\bigg)=\PROJ{\K} \bigg( 
      \bar X + t\bar V + \int_{0}^{t} (t-s) F[X(s)] \,\d s \bigg),
\label{E:PROJREP01}\\
V(t) =\PROJ{\H_{X(t)}}\big(Y(t)\big)= \PROJ{\H_{X(t)}} \bigg( 
\bar V + \int_{0}^{t} F[X(s)] \,\d s \bigg) .
\label{E:PROJREP03}
\end{gather}
\end{proposition}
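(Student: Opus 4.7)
The plan is to deduce each of the three claims from three ingredients already collected in Theorem \ref{T:BASIC}: the a.e.\ identity $V(t)=\PROJ{\H_{X(t)}}(Y(t))$, the right-continuity of $V$, and the sticky monotonicity \eqref{eq:39bis}.

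\emph{Step 1: proof of \eqref{E:VELOCITY}.} By \eqref{eq:34} we have $V(t)\in\H_{X(t)}$ for every $t\in\calT^0$, where $\calT^0$ has full measure in $(0,\infty)$. Fix any $t\ge0$ and pick a sequence $t_n\downarrow t$ with $t_n\in\calT^0$. The sticky condition gives $\Omega_{X(t)}\subset \Omega_{X(t_n)}$, hence by \eqref{E:INCLUSION-SPACE} we have $\H_{X(t_n)}\subset\H_{X(t)}$, so $V(t_n)\in \H_{X(t)}$. By the right-continuity \eqref{eq:40}, $V(t_n)\to V(t)$ in $\L^2(\Omega)$, and since $\H_{X(t)}$ is a closed subspace the limit $V(t)$ belongs to $\H_{X(t)}$.

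\emph{Step 2: proof of \eqref{E:PROJREP03}.} From Theorem \ref{T:BASIC} the minimal selection identity gives $Y(t)-V(t)\in \partial I_\K(X(t))=N_{X(t)}\K$. By Step 1, $V(t)\in\H_{X(t)}$, and by \eqref{E:ORTHO} the normal cone $N_{X(t)}\K$ is contained in $\H_{X(t)}^\perp$. Decomposing $Y(t)=V(t)+(Y(t)-V(t))$ with the two summands in $\H_{X(t)}$ and $\H_{X(t)}^\perp$ respectively, we conclude $V(t)=\PROJ{\H_{X(t)}}(Y(t))$. The rewriting in terms of $\bar V$ and $\int_0^t F[X(s)]\,\d s$ follows from the definition of $Y$.

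\emph{Step 3: proof of \eqref{E:PROJREP01}.} By the characterization \eqref{E:DEFPARTIAL} of the metric projection, it suffices to show
\begin{equation}
  \label{eq:goal-proj}
  \bar X+\int_0^t Y(s)\,\d s-X(t)\in \partial I_\K(X(t)).
\end{equation}
From \eqref{E:FODI} there is a selection $\xi(s):=Y(s)-\dot X(s)\in \partial I_\K(X(s))=N_{X(s)}\K$ for a.e.\ $s\in(0,t)$, which belongs to $L^2_{\rm loc}([0,\infty);\L^2(\Omega))$ since both $Y$ and $\dot X$ do. By the sticky monotonicity \eqref{eq:39bis} we have $N_{X(s)}\K\subset N_{X(t)}\K$ for every $s\le t$, so $\xi(s)\in N_{X(t)}\K$ for a.e.\ $s\in(0,t)$. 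Since $N_{X(t)}\K$ is a closed convex cone in $\L^2(\Omega)$, Remark \ref{R:MINKOWSKI} gives
\begin{equation*}
  \int_0^t \xi(s)\,\d s=\int_0^t Y(s)\,\d s-\bigl(X(t)-\bar X\bigr)\in N_{X(t)}\K=\partial I_\K(X(t)),
\end{equation*}
which is exactly \eqref{eq:goal-proj}. The second equality in \eqref{E:PROJREP01} then follows by Fubini's theorem applied to $\int_0^t Y(s)\,\d s=t\bar V+\int_0^t\!\!\int_0^s F[X(r)]\,\d r\,\d s=t\bar V+\int_0^t(t-s)F[X(s)]\,\d s$.

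The main obstacle is Step 3, where one has to transfer a family of pointwise selections from the normal cones $N_{X(s)}\K$ into the single normal cone $N_{X(t)}\K$; this is precisely where the sticky monotonicity \eqref{eq:39bis} and the cone structure encoded in Remark \ref{R:MINKOWSKI} are used in an essential way. Step 1 is the other delicate point because the inclusion $V(t)\in\H_{X(t)}$ is only known a.e.\ from \eqref{eq:34} and must be propagated to every $t$ using right-continuity together with the \emph{decreasing} monotonicity of the spaces $\H_{X(t)}$, which is opposite to the monotonicity of the cones $\Omega_{X(t)}$.
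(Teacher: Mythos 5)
Your proof is correct and follows essentially the same route as the paper: the a.e.\ identity \eqref{eq:34} plus right-continuity and the monotonicity \eqref{eq:39bis} for \eqref{E:VELOCITY}, the orthogonal decomposition of $Y(t)$ via \eqref{E:ORTHO} for \eqref{E:PROJREP03}, and integration of the selection $Y(s)-\dot X(s)\in\partial I_\K(X(s))\subset\partial I_\K(X(t))$ followed by the projection characterization for \eqref{E:PROJREP01}. The only difference is that you spell out details (the closedness of $\H_{X(t)}$, Remark \ref{R:MINKOWSKI}, the Fubini computation) that the paper leaves implicit.
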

\begin{proof}
\eqref{E:VELOCITY} follows easily by \eqref{eq:40} and \eqref{eq:34},
thanks to the monotonicity property \eqref{eq:39bis}.
Equation \eqref{E:PROJREP03} then follows from \eqref{eq:26} (where $\dot
X$ is replaced by $V$)
and \eqref{E:ORTHO}.
  
In order to prove \eqref{E:PROJREP01}, we set for $s\GS 0$
\begin{equation}
  \label{eq:41}
  \Xi(s):=Y(s)-V(s)\in \partial I_\K(X(s))\subset \partial
  I_\K(X(t))\quad \text{if }t\GS s,
\end{equation}
and we integrate \eqref{eq:41} w.r.t. $s$ from $0$ to $t$ to obtain
\begin{align*}
  \int_0^t \Xi(s)\, \d s=\bar X-X(t)+
  \int_0^t Y(s)\, \d s\in \partial I_\K(X(t))
\end{align*}
Recalling \eqref{E:EQUIVA3} we get \eqref{E:PROJREP01}.
%
\end{proof}
\begin{lemma}[Concatenation property]
\label{P:CONCATENATION}
Let $X_1, X_2$ be Lagrangian solutions with
initial data $\bar X_1,\bar V_1$ and $\bar X_2,\bar V_2$
respectively and let us suppose that 
\begin{equation}
  \label{eq:66}
  \Omega_{\bar X_2}\subset \Omega_{X_2(t)}\quad\text{for every }t\ge0.
\end{equation}
If 
for some $\tau>0$
\begin{equation}
  \label{eq:45}
    \bar X_2=X_1(\tau),\quad 
    Y_1(\tau)-\bar V_2=\bar \Xi_2\in \partial I_\K(\bar X_2),
  \end{equation}
  then the curve 
  \begin{equation}\label{eq:68}
    \tilde X:=
  \begin{cases}
    X_1(t)&\text{if }0\LS t\LS \tau,\\
    X_2(t-\tau)&\text{if }t\GS \tau,
  \end{cases}
\end{equation}
is a Lagrangian solution with initial data $(\bar X_1,\bar V_1)$.
In particular, if $X_1,X_2$ are sticky Lagrangian solutions, then
$\tilde X$ is also sticky.
\end{lemma}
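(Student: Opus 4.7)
The plan is to verify directly that $\tilde X$ satisfies the differential inclusion \eqref{E:FODI} with initial data $(\bar X_1,\bar V_1)$ by splitting into the two regimes $[0,\tau]$ and $[\tau,+\infty)$, showing continuity at the junction $t=\tau$, and then exploiting the monotonicity property \eqref{E:MONO} of the normal cone under the inclusion \eqref{eq:66} in order to absorb the ``defect'' $\bar\Xi_2$ into $\partial I_\K(\tilde X(t))$ for $t>\tau$. The sticky property will then follow by a three-case bookkeeping argument.

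\textbf{Step 1 (Junction and Lipschitz regularity).} At $t=\tau$ both branches agree, since $\tilde X(\tau^-)=X_1(\tau)=\bar X_2=X_2(0)=\tilde X(\tau^+)$, and the local Lipschitz continuity of $\tilde X$ follows immediately from that of $X_1$ and $X_2$. Hence $\tilde X\in\LIP_{\rm loc}([0,\infty);\K)$ with $\tilde X(0)=\bar X_1$.

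\textbf{Step 2 (The differential inclusion).} For $t\in[0,\tau]$ there is nothing to prove since $\tilde X=X_1$ there. For $t>\tau$ define
\begin{equation*}
\tilde Y(t):=\bar V_1+\int_0^t F[\tilde X(s)]\,\d s
=\bar V_1+\int_0^\tau F[X_1(s)]\,\d s+\int_0^{t-\tau}F[X_2(s)]\,\d s
= Y_1(\tau)+\int_0^{t-\tau}F[X_2(s)]\,\d s.
\end{equation*}
Using the assumption $Y_1(\tau)=\bar V_2+\bar\Xi_2$ we obtain $\tilde Y(t)=Y_2(t-\tau)+\bar\Xi_2$. Since $X_2$ is a Lagrangian solution, we have (by \eqref{eq:26} applied to $X_2$)
\begin{equation*}
\dot X_2(t-\tau)+\partial I_\K(X_2(t-\tau))\ni Y_2(t-\tau)
\quad\text{for a.e.\ $t>\tau$.}
\end{equation*}
The key observation now is that by \eqref{eq:66} and the monotonicity \eqref{E:MONO} of the normal cone,
\begin{equation*}
\bar\Xi_2\in\partial I_\K(\bar X_2)=N_{\bar X_2}\K\subset N_{X_2(t-\tau)}\K=\partial I_\K(\tilde X(t))
\quad\text{for every $t\GS \tau$.}
\end{equation*}
Since $\partial I_\K(\tilde X(t))$ is a convex cone (indeed, the normal cone at a point), it is closed under addition, and so
\begin{equation*}
\tilde Y(t)-\dot{\tilde X}(t)=\bigl(Y_2(t-\tau)-\dot X_2(t-\tau)\bigr)+\bar\Xi_2\in\partial I_\K(\tilde X(t)),
\end{equation*}
which is exactly \eqref{E:FODI} for $\tilde X$.

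\textbf{Step 3 (Sticky property).} Assume now that $X_1$ and $X_2$ are both sticky. Given $0\LS t_1\LS t_2$, we distinguish three cases. If both times lie in $[0,\tau]$, the inclusion $\Omega_{\tilde X(t_1)}\subset \Omega_{\tilde X(t_2)}$ follows from stickiness of $X_1$; if both lie in $[\tau,\infty)$, from stickiness of $X_2$. If $t_1\LS\tau\LS t_2$, then stickiness of $X_1$ gives $\Omega_{X_1(t_1)}\subset \Omega_{X_1(\tau)}=\Omega_{\bar X_2}$ and \eqref{eq:66} yields $\Omega_{\bar X_2}\subset\Omega_{X_2(t_2-\tau)}=\Omega_{\tilde X(t_2)}$, whence the desired inclusion by transitivity.

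\textbf{Main obstacle.} The only nontrivial point is the absorption step in Step~2: showing that the ``leftover'' constant element $\bar\Xi_2$ persists in the normal cone along the second branch. This is precisely what \eqref{eq:66}, combined with the abstract monotonicity \eqref{E:MONO}, was designed to guarantee, and it is the reason why the concatenation hypothesis is stated in terms of a normal-cone element $\bar\Xi_2$ rather than in terms of velocity compatibility.
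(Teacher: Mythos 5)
Your proof is correct and follows essentially the same route as the paper: you introduce the shifted variable $\tilde Y(t)=Y_2(t-\tau)+\bar\Xi_2$ for $t\GS\tau$ and absorb the defect $\bar\Xi_2$ into $\partial I_\K(\tilde X(t))$ using \eqref{eq:45}, \eqref{eq:66}, and the cone structure of the subdifferential, exactly as in the paper's argument. Your Step 3 merely makes explicit the three-case verification of stickiness that the paper leaves to the reader.
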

Notice that 
\begin{equation}
  \label{eq:71}
  \text{the choice}\quad \bar V_2:=V(\tau_1)
  \quad\text{always satisfies \eqref{eq:45}.}
\end{equation}
\begin{proof}
  It is easy to check that 
  \begin{displaymath}
    \tilde Y(t):=
    \begin{cases}
      Y_1(t)&\text{if }0\LS t\LS \tau,\\
      Y_1(\tau)-\bar V_2+Y_2(t-\tau)&\text{if }t\GS \tau,
    \end{cases}
  \end{displaymath}
  is Lipschitz continuous and satisfies $\frac \d{\d t}\tilde
  Y(t)=F[\tilde X(t)]$ a.e.\ in $(0,\infty)$.
  We have to check that $\tilde X$ satisfies the first differential
  inclusion
  of \eqref{eq:26} for $t\GS \tau$ w.r.t.\ $\tilde Y$.
  By definition of $\tilde X$ we have for $t\GS \tau$
  \begin{align*}
    \tilde Y(t)-\frac \d{\d t}\tilde
    X(t)&=Y_1(\tau)-\bar V_2+Y_2(t-\tau)-
    V_2(t-\tau)\\&=
    \bar \Xi_2+\Xi_2(t-\tau)\in \partial I_\K(X_2(t-\tau))=\partial
    I_\K(\tilde X(t))
  \end{align*}
  since by 
  \eqref{eq:45} and \eqref{eq:66}  $\bar \Xi_2\in \partial I_\K(X_2(t-\tau))$.
\end{proof}
It would not be difficult to show that Lagrangian solutions in general
do not satisfy the sticky property nor the semigroup property. 
If the force is sticking then the next property
shows that 
these properties
are strictly related.
\begin{theorem}[Semigroup property]
\label{P:SEMIGROUP}
If the force operator $F$ is Lipschitz and sticking
and 
\begin{equation}
  \label{eq:67}
  \Omega_{\bar X}\subset \Omega_{X(t)}\quad\text{for every Lagrangian
    solution $X$ starting from $(\bar X,\bar V)\in \K\times \H_{\bar X}$}
\end{equation}
 then
every Lagrangian solution
$X$ starting from $(\bar X,\bar V)\in \K\times \H_{\bar X}$ 
is sticky and satisfies the following semigroup property:
for every $\tau>0$ the curve $\tilde X(t):=X(t-\tau)$ 
is the unique Lagrangian solution with initial data 
$X(\tau),V(\tau)$.
  
  In particular, for all $t\GS t_1\ge0$ we have
  %
  %
  \begin{gather}
    X(t) = \PROJ{\K} \bigg( 
    X(t_1) + (t-t_1) V(t_1) + \int_{t_1}^{t} (t-s) F[X(s)] \,\d s \bigg) 
    \label{E:PROJREP1}\\
    V(t) = \PROJ{\H_{X(t)}} \bigg( 
  V(t_1) + \int_{t_1}^{t} F[X(s)] \,\d s \bigg) .
  \label{E:PROJREP3}
\end{gather}
%
\end{theorem}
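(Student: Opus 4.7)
The plan is to reduce everything to Lemma \ref{P:CONCATENATION} and hypothesis \eqref{eq:67}, once the pointwise claim $V(\tau)\in \H_{X(\tau)}$ for every $\tau\ge 0$ has been established; with sticky in hand, Proposition \ref{P:PROJECTION} then supplies the representation formulas. The first step is this key intermediate claim. For $\tau=0$ it is immediate from $\bar V\in \H_{\bar X}$ together with $V(0)=\bar V$ via \eqref{eq:63} (applicable because $\H_{\bar X}\subset T_{\bar X}\K$), and for a.e.\ $\tau$ (on the full-measure set $\calT^0$ of Theorem \ref{T:BASIC}) it is \eqref{eq:34}. For a general $\tau$, I would take $\tau_n\downarrow \tau$ in $\calT^0$, use right-continuity \eqref{eq:40} to obtain $V(\tau_n)\to V(\tau)$ in $\L^2(\Omega)$, and combine the sticking property of $F$ with hypothesis \eqref{eq:67} (which supplies the uniform inclusion $\H_{X(t)}\subset \H_{\bar X}$) to pass the membership $V(\tau_n)\in \H_{X(\tau_n)}$ through to the limit. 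This is the main technical obstacle, since the map $X\mapsto \H_X$ is not continuous, so the sticking of $F$ is essential to control the $\H_{X(\tau)}^\perp$-component of $Y(\tau)$. An alternative route, consistent with the paper's stated philosophy, is to first establish the claim for discrete Lagrangian solutions (Theorem \ref{T:EXDISCRETE}) and then pass to the limit via the Lipschitz stability \eqref{E:XCONTR}.

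Once the key lemma is available, fix $\tau>0$ and let $X_\tau$ be the unique Lagrangian solution with initial data $(X(\tau),V(\tau))\in \K\times \H_{X(\tau)}$, provided by Theorem \ref{T:EUS}. Hypothesis \eqref{eq:67} applied to $X_\tau$ gives $\Omega_{X(\tau)}\subset \Omega_{X_\tau(s)}$ for every $s$, which is precisely \eqref{eq:66}; and, by \eqref{eq:71}, the choice $\bar V_2=V(\tau)$ fulfills \eqref{eq:45} because $Y(\tau)-V(\tau)=\Xi(\tau)\in \partial I_\K(X(\tau))$ comes from the differential inclusion \eqref{eq:26}. Lemma \ref{P:CONCATENATION} therefore produces a Lagrangian solution with initial data $(\bar X,\bar V)$, obtained by concatenating $X|_{[0,\tau]}$ with $X_\tau(\cdot-\tau)$; by the uniqueness clause of Theorem \ref{T:EUS} this concatenation must coincide with $X$, whence $X(\tau+t)=X_\tau(t)$ for all $t\ge 0$, which is exactly the semigroup property.

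To deduce the sticky property, fix $0\le t_1\le t_2$: applying the semigroup with $\tau=t_1$ gives $X(t_2)=X_{t_1}(t_2-t_1)$, and hypothesis \eqref{eq:67} applied to $X_{t_1}$ immediately yields $\Omega_{X(t_1)}=\Omega_{X_{t_1}(0)}\subset \Omega_{X_{t_1}(t_2-t_1)}=\Omega_{X(t_2)}$. With the sticky property of $X$ in place, Proposition \ref{P:PROJECTION} applies to the sticky Lagrangian solution $X_{t_1}$ with initial data $(X(t_1),V(t_1))$; writing its formulas \eqref{E:PROJREP01}--\eqref{E:PROJREP03} at time $t-t_1$ and substituting $X(t)=X_{t_1}(t-t_1)$ (and correspondingly for the velocity) produces the target representation formulas \eqref{E:PROJREP1}--\eqref{E:PROJREP3}.
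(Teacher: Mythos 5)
Your overall architecture is the right one and matches the paper's: reduce to the concatenation Lemma \ref{P:CONCATENATION} plus hypothesis \eqref{eq:67}, the crux being that the concatenation data must lie in $\K\times\H_{X(\tau)}$ so that \eqref{eq:67} can be invoked for the second piece; your second and third paragraphs are correct granted that. But the proof of the key claim $V(\tau)\in\H_{X(\tau)}$ for \emph{every} $\tau$ is exactly where the work lies, and your argument for it has a genuine gap. Taking $\tau_n\downarrow\tau$ in $\calT^0$ and using right-continuity gives $V(\tau_n)\to V(\tau)$ with $V(\tau_n)\in\H_{X(\tau_n)}$, but to conclude $V(\tau)\in\H_{X(\tau)}$ you would need the inclusion $\H_{X(\tau_n)}\subset\H_{X(\tau)}$, i.e.\ $\Omega_{X(\tau)}\subset\Omega_{X(\tau_n)}$ --- which is precisely the monotonicity at the exceptional time $\tau$ that you are trying to establish. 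The map $X\mapsto\H_X$ has no semicontinuity under strong convergence: $X(\tau)$ may be constant on an interval on which no $X(\tau_n)$ is, in which case membership of $V(\tau_n)$ in $\H_{X(\tau_n)}$ says nothing about constancy of the limit there. Hypothesis \eqref{eq:67} only yields $\H_{X(t)}\subset\H_{\bar X}$, an inclusion into the wrong space, and invoking ``the sticking of $F$'' at this point is a gesture rather than an argument. The same objection applies to your alternative route via discrete solutions: pointwise-in-time membership $V_n(\tau)\in\H_{X_n(\tau)}$ does not pass to the limit, and \eqref{E:VCONTR} controls the velocities only in $\L^2$ in time.

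The paper's resolution is different and is the essential step. Having obtained $\Omega_{X(\sigma)}\subset\Omega_{X(t)}$ for $\sigma\in\calT^0\cup\{0\}$ and $t>\sigma$ (where \eqref{eq:34} does give $V(\sigma)\in\H_{X(\sigma)}$, so your concatenation argument applies), at an arbitrary time $s$ one does \emph{not} concatenate with $V(s)$: one takes a weak limit $V_-$ of the left difference quotients $h_n^{-1}\bigl(X(s)-X(s-h_n)\bigr)$, shows $Y(s)-V_-\in\partial I_\K(X(s))$ by averaging the inclusion over $[s-h_n,s]$ and using the a.e.\ monotonicity already proved, and then concatenates with the \emph{projected} velocity $\bar V:=\PROJ{\H_{X(s)}}(V_-)\in\H_{X(s)}$; Lemma \ref{L:MINIMAL}, applied to $-V_-\in T_{X(s)}\K$, guarantees that $Y(s)-\bar V$ still lies in $\partial I_\K(X(s))$, so \eqref{eq:45} holds for this modified datum. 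This yields stickiness at every time, and only \emph{then} does one deduce $V(t)\in\H_{X(t)}$ for all $t$ and run the concatenation with $V(\tau)$ itself to obtain the semigroup property. You need some version of this detour, or another concrete mechanism, to close the gap in your first step.
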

\begin{proof}
Let $\calT^0\in [0,\infty)$ as in \eqref{eq:34} ($(0,\infty)\setminus
\calT^0$ is negligible). For every $\tau\in \calT^0$ consider the 
Lagrangian solution $X_2$ with initial datum $(X(\tau),V(\tau))$: by
the concatenation property (with the choice \eqref{eq:71})
the map $\tilde X$ defined as in
\eqref{eq:68} (with $X_1:=X$) is a Lagrangian solution and therefore
coincides with $X$, since $F$ is Lipschitz. \eqref{eq:67} yields that 
\begin{equation}
  \label{eq:69}
  \Omega_{X(\tau)}\subset \Omega_{X(t)}\quad \text{for every }0\LS \tau<
  t,\quad
  \tau\in \calT^0\cup \{0\}.
\end{equation}
Let us now fix $s>0$ and consider a sequence $h_n\downarrow0$ such
that 
\begin{displaymath}
  \frac 1{h_n}\big(X(s)-X(s-h_n)\big)\WEAK V_-\quad\text{in }L^2(\Omega).
\end{displaymath}
Since $T_{X(s)}\K$ is a closed convex cone, it is also weakly closed,
so that by its very definition definition we have $-V_-\in
T_{X(s)}\K$.

We set $\Xi(t):=Y(t)-\dot X(t)\in \partial I_\K(X(t))$ thanks to the
differential inclusion of \eqref{eq:26}; an integration in time from
$s-h_n$ to $s$ and \eqref{eq:69} yield
\begin{displaymath}
  \fint_{s-h_n}^s Y(r)\,\d r-\frac 1{h_n}\big(X(s)-X(s-h_n))=
  \fint_{s-h_n}^s \Xi(r)\,\d r\in \partial I_\K(X(s)).
\end{displaymath}
Passing to the limit as $n\to\infty$ we obtain
\begin{displaymath}
  \Xi_-:=Y(s)-V_-\in \partial I_\K(X(s))
\end{displaymath}
and therefore by \eqref{E:ORTHO} 
\begin{displaymath}
  \bar V:=\PROJ{\X_{X(s)}}(V_-)=\PROJ{\X_{X(s)}}(Y(s)).
\end{displaymath}
Since 
$$Y(s)-\bar V=Y-V_-+\big(-\bar V-(-V_-)\big)=\Xi_-+\big(-\bar
V-(-V_-))\in \partial I_\K(X(s))$$ 
by \eqref{eq:36} and the fact that $-V_-\in T_{X(s)}\K$,
we can apply the concatenation property as before, joining at the time $s$
the Lagrangian solution $X_1:=X$ with the Lagrangian solution $X_2$
arising from the initial data $\bar X:=X(s)$ and $\bar V$.
The uniqueness theorem shows that this map coincides with $X$ and
therefore \eqref{eq:66} yields
$\Omega_{X(s)}\subset \Omega_{X(t)}$ for every $t>s$.

In particular we have $V(t)\in \X_{X(t)}$ for every $t\ge0$ so
that a further application of the concatenation Lemma
\ref{P:CONCATENATION}
yields the semigroup property.
\eqref{E:PROJREP1} and \eqref{E:PROJREP3} follow then 
by the corresponding \eqref{E:PROJREP01} and \eqref{E:PROJREP03}
\end{proof}
We conclude this section with our main result conerning the existence
of sticky Lagrangian solution;
the proof will require a careful analysis of the discrete particle models
and therefore will be postponed at the end of Section \ref{S:DODP},
see Remark \ref{R:MONO}.
\begin{theorem}[Sticking forces yields sticky Lagrangian
  solutions]
\label{T:MAINSTICK}
  If the for\-ce operator $F$ is Lipschitz and sticking (according to
  Defintion \ref{D:NONSPLITTING}) then every Lagrangian solution 
  to \eqref{E:FODI} with $\bar X\in \K$ and $\bar V\in \H_{\bar X}$ is
  sticky.  
\end{theorem}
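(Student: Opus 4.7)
The plan is a discrete approximation argument, leveraging the uniqueness and stability of Lagrangian solutions for Lipschitz $F$ (Theorem \ref{T:EUS}) together with the discrete sticky theory (Theorem \ref{T:EXDISCRETE}). Given $(\bar X,\bar V)\in \K\times\H_{\bar X}$, I partition $\Omega$ into $N$ equal subintervals and let $\bar X_N$ and $\bar V_N$ be the $\L^2$-averages of $\bar X$ and $\bar V$ on each piece, so that $\bar X_N\in\K$ is piecewise constant, $\bar V_N\in\H_{\bar X_N}$ automatically, and $(\bar X_N,\bar V_N)\to(\bar X,\bar V)$ in $\L^2(\Omega)\times\L^2(\Omega)$. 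Since $F$ is Lipschitz and sticking, Theorem \ref{T:EXDISCRETE} delivers a unique sticky discrete Lagrangian solution $X_N$ with these initial data; by Proposition \ref{P:PROJECTION} it satisfies
\[
X_N(t) = \PROJ{\K}\!\Big(\bar X_N + t\,\bar V_N + \int_0^t (t-s)\,F[X_N(s)]\,ds\Big).
\]

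By the stability estimate \eqref{E:XCONTR}, the continuity of $F$, the contractivity of $\PROJ{\K}$ (Lemma \ref{L:CONTRACTION}), and dominated convergence, $X_N\to X$ in $\C([0,T];\L^2(\Omega))$ for every $T>0$, and the projection representation passes to the limit:
\[
X(t) = \PROJ{\K}\!\Big(\bar X + t\,\bar V + \int_0^t (t-s)\,F[X(s)]\,ds\Big).
\]
It remains to deduce from this representation that $\Omega_{X(t_1)}\subset\Omega_{X(t_2)}$ for all $0\le t_1\le t_2$. Via the convex-envelope characterization \eqref{eq:60} of $\PROJ{\K}$, the constancy intervals of $X(t)$ correspond to the linear segments of the convex envelope of the primitive of $G(t):=\bar X+\int_0^t Y(s)\,ds$. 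The sticking hypothesis --- equivalent (taking $Y=X$ in Definition \ref{D:NONSPLITTING}) to $F[X]-\PROJ{\H_X}(F[X])\in\partial I_\K(X)$ for every $X\in\K$ --- together with the monotonicity \eqref{E:MONO}, provides the mechanism that prevents an existing linear segment of this convex envelope from being destroyed as $t$ increases.

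The main obstacle is this final deduction. The sticky property is a topological condition on constancy intervals that is not preserved under $\L^2$-convergence in general: simple examples show that sequences of sticky functions can converge in $\L^2$ to non-sticky limits when the underlying dynamics does not propagate stickiness. The sticking hypothesis on $F$ must therefore be combined with the projection representation in a careful way to exclude such pathologies. Making this rigorous amounts to tracking how linear segments of the convex envelope of the primitive of $G(t)$ evolve under the time increment $\int_s^t \mathcal Y(r,\cdot)\,dr$ and transferring this description from the discrete to the continuous setting --- precisely the detailed ``careful analysis of the discrete particle models'' alluded to in the remark preceding the theorem, which is why the proof is deferred to Section \ref{S:DODP}.
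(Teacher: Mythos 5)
You have correctly assembled the ingredients (discrete sticky solutions from Theorem~\ref{T:EXDISCRETE}, the contraction estimate \eqref{E:XCONTR}, the projection formula), and you have correctly diagnosed that the full monotonicity condition $\Omega_{X(t_1)}\subset\Omega_{X(t_2)}$ does not pass to a strong $\L^2$ limit. But you then stop, declaring the final deduction to be the hard ``careful analysis'' that the paper defers. In fact the deferred argument (Remark~\ref{R:MONO}) is short, and the idea you are missing is a \emph{reduction}, not a finer analysis of convex envelopes: by Theorem~\ref{P:SEMIGROUP}, once $F$ is Lipschitz and sticking it suffices to verify the much weaker hypothesis \eqref{eq:67}, namely that $\Omega_{\bar X}\subset\Omega_{X(t)}$ for all $t\GS 0$ and all admissible initial data --- i.e.\ only the \emph{initial} constancy intervals need to persist. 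Theorem~\ref{P:SEMIGROUP} then bootstraps this, via the concatenation lemma and uniqueness, to the full condition $\Omega_{X(t_1)}\subset\Omega_{X(t_2)}$ for all $t_1\LS t_2$.

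This reduction is exactly what makes the limit passage work, because ``$X(t)$ is constant on a \emph{fixed} interval $(\alpha,\beta)\subset\Omega_{\bar X}$'' is membership in a closed linear subspace of $\L^2(\Omega)$ and hence \emph{is} stable under strong convergence --- unlike the condition you were trying to pass to the limit. Concretely: fix a maximal interval $(\alpha,\beta)\subset\Omega_{\bar X}$ and choose the discrete data $(\bar X_n,\bar V_n)$ so that $\bar X_n$ is constant on $(\alpha,\beta)$ (your equispaced averaging does not quite do this near the endpoints $\alpha,\beta$; one should use partitions adapted to $(\alpha,\beta)$). The discrete solution of Theorem~\ref{T:EXDISCRETE} is sticky, so $X_n(t)$ stays constant on $(\alpha,\beta)$ for all $t$; by \eqref{E:XCONTR} the limit $X(t)$ is constant on $(\alpha,\beta)$ as well, which is \eqref{eq:67}. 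Without invoking Theorem~\ref{P:SEMIGROUP} (or reproving its content), your argument does not close, so as written there is a genuine gap at the decisive step.
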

\begin{remark}
We have seen that the right-derivative $V$ of a sticky Lagrangian
solution
is right-continuous everywhere. It
is continuous for all $t\GS 0$ for which the function $t\mapsto
\|V(t)\|_{\L^2(\Omega)}$ (which represents the kinetic energy) is continuous;
see Proposition~3.3 in \cite{Brezis}. At such times the map $t\mapsto X(t)$ is
differentiable. We do not know whether 
the velocity is of bounded variation. But \eqref{E:PROJREP3} 
and \eqref{E:TANGENT}
show the
following statement: For any $t\GS 0$ let $V_- \in \L^2(\Omega)$ be any weak
accumulation point of $V(s)$ as $s\uparrow t$. Then $V(t) =
\PROJ{T_{X(t)}\K}(V_-)=\PROJ{\H_{X(t)}}(V-)$. This is the analogue of the impact law \eqref{E:IMPACT}
we discussed in the Introduction. It follows easily from
\eqref{E:PROJREP3}.
\end{remark}

\EEE
\subsection{Lagrangian solutions for continuous force
  fields}\label{SS:FPALS} 
The goal of this section is to extend the existence Theorem \ref{T:EUS}
to the case of (uniformly) continuous force operators.
\begin{theorem}\label{T:SCHAUDER} Suppose that  $F:\K\to \L^2(\Omega)$ satisfies the pointwise linear condition (\ref{D:LINEARBOUNDEDNESS}) 
and it is uniformly continuous according to 
(\ref{E:UNFO1}).
Then for every $(\bar X,\bar V) \in \K\times \L^2(\Omega)$ there exists 
a Lagrangian solution $(X,Y)$ of \eqref{eq:26}. \\
Moreover, for any $T\GS 0$ there exists a constant $C_{T} \GS 0$
such that any Lagrangian solution $X$ with velocity $V:=\frac {\d^+}{\d t}X$ satisfy
\begin{equation}
  \|X(t)\|_{\L^\p(\Omega)} + \|V(t)\|_{\L^\p(\Omega)}
    \LS C_{T} \Big( 1 + \|\bar{X}\|_{\L^\p(\Omega)} 
      +\|\bar{V}\|_{\L^\p(\Omega)} \Big)
\label{E:BOUNDLP}
\end{equation}
for all $t\in[0,T]$. 
If 
$\psi \colon \R
\longrightarrow [0,+\infty)$ is an integrand satisfying \eqref{EQ:130} and
\eqref{E:131} for some $q\GS 1$, then there exists a constant $C_{q,T} \GS
0$ such that 
\begin{equation}
  \Psi[X(t)] + \Psi[V(t)] \LS C_{q,T} \Big( 1 +
    \Psi[\bar{X}] + \Psi[\bar{V}] \Big)
\label{E:BOUNDPSI}
\end{equation}
for all $t\in[0,T]$, with functional $\Psi$ defined in \eqref{E:113}.

\end{theorem}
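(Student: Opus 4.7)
The plan is to construct a Lagrangian solution by a semi-implicit time discretization and then pass to the limit using a refined $\Psi$-bound as the compactness mechanism. For $\tau>0$, I partition $[0,T]$ into intervals $I_n=[n\tau,(n+1)\tau]$ and define $(X_\tau,Y_\tau)$ recursively by solving, on each $I_n$,
\begin{equation*}
\dot X_\tau+\partial I_\K(X_\tau)\ni Y_\tau,\qquad Y_\tau(t):=Y_\tau(n\tau)+(t-n\tau)\,F[X_\tau(n\tau)],
\end{equation*}
with $Y_\tau(0):=\bar V$. Since $Y_\tau$ is Lipschitz in $t$, each subproblem is a gradient flow of $I_\K$ perturbed by a smooth forcing, so Theorem~3.17 of \cite{Brezis} produces a unique Lipschitz solution on each $I_n$; Theorem~\ref{T:BASIC} then applies and gives the minimal-selection identity $V_\tau(t)=\PROJ{T_{X_\tau(t)}\K}(Y_\tau(t))$.

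\textbf{A priori bounds.} The central pointwise estimate is
\begin{equation*}
\Psi[V_\tau(t)]\LS \Psi[Y_\tau(t)]\quad\text{for every convex $\psi\GS 0$.}
\end{equation*}
Indeed, by Lemma~\ref{L:NORMAL}, $Y_\tau-V_\tau\in N_{X_\tau}\K$ vanishes a.e.\ on $\Omega\setminus\Omega_{X_\tau(t)}$, so $V_\tau=Y_\tau$ there. On each maximal interval $(\alpha,\beta)\subset\Omega_{X_\tau(t)}$, Lemma~\ref{L:TANGENT} shows that $V_\tau|_{(\alpha,\beta)}$ is the $\L^2$-projection of $Y_\tau|_{(\alpha,\beta)}$ onto the cone of nondecreasing maps on $(\alpha,\beta)$, so Lemma~\ref{L:CONTRACTION} yields $\int_\alpha^\beta\psi(V_\tau)\LS\int_\alpha^\beta\psi(Y_\tau)$; summing over all maximal intervals gives the claim. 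Combined with $X_\tau(t,m)=\bar X(m)+\int_0^tV_\tau(s,m)\,\d s$, the pointwise bound \eqref{D:LINEARBOUNDEDNESS} on $Y_\tau$, the doubling inequality \eqref{EQ:112}, and Jensen's inequality in the time integral, this produces a closed Gronwall-type system for $\Psi[X_\tau(t)]$ and $\Psi[V_\tau(t)]$, from which \eqref{E:BOUNDLP} (specializing to $\psi(r)=r^2$) and \eqref{E:BOUNDPSI} follow uniformly in $\tau$.

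\textbf{Compactness and passage to the limit.} Since $\bar X,\bar V\in\L^2(\Omega)$, de la Vall\'ee-Poussin's criterion together with Lemma~\ref{L:DOUBLE} supplies a doubling $\psi$ with $\psi(r)/r^2\to\infty$ and $\Psi[\bar X]+\Psi[\bar V]<\infty$. The uniform $\Psi$-bound of the previous step, combined with Lemma~\ref{L:COMPACTNESS}, makes $\{X_\tau(t)\}_\tau$ precompact in $\L^2(\Omega)$ for every $t$; together with the uniform Lipschitzness of $t\mapsto X_\tau(t)$ in $\L^2$, Arzel\`a-Ascoli yields a subsequence $X_{\tau_k}\to X$ in $C([0,T];\L^2(\Omega))$. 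Uniform continuity of $F$ then promotes this to $F[X_{\tau_k}(\cdot)]\to F[X(\cdot)]$ uniformly in $\L^2$, so $Y_{\tau_k}\to Y:=\bar V+\int_0^\cdot F[X(s)]\,\d s$ uniformly, and the strong-weak closedness of the graph of the maximal monotone operator $\partial I_\K$ in $\L^2\times\L^2$ allows passage to the limit in $\dot X_{\tau_k}+\partial I_\K(X_{\tau_k})\ni Y_{\tau_k}$, yielding the desired Lagrangian solution $X$. The bounds \eqref{E:BOUNDLP} and \eqref{E:BOUNDPSI} transfer to $X$ by $\L^2$-lower semicontinuity of $\Psi$.

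\textbf{Main obstacle.} The key difficulty is the pointwise $\Psi$-contraction $\Psi[V_\tau]\LS\Psi[Y_\tau]$ for the tangent-cone projection: since $T_X\K$ is strictly larger than $\H_X$, Lemma~\ref{L:CONTRACTION2} does not apply directly, and one must exploit the decomposition of $\Omega_{X_\tau(t)}$ into maximal intervals to reduce to the one-dimensional contraction of Lemma~\ref{L:CONTRACTION}. A secondary technical point is absorbing the nonlocal term $\|X_\tau(s)\|_{\L^1}$ in \eqref{D:LINEARBOUNDEDNESS} inside the Gronwall argument, which requires the doubling property of $\psi$ together with Jensen's inequality. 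Once these two estimates are in hand, the compactness/passage-to-the-limit step is standard.
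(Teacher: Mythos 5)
Your argument is correct, but it takes a genuinely different route from the paper. The paper proves existence via a Schauder fixed-point argument: it builds the operator $\OP=\OP_2\circ\OP_1$ (first integrate the force along a trial curve $W$ to get $Y$, then solve $\dot X+\partial I_\K(X)\ni Y$), and shows that $\OP$ maps a carefully constructed compact convex subset $\CC$ of $\C([0,T];\L^2(\Omega))$ into itself, where $\CC$ is cut out by a $\Psi$-sublevel constraint (compact in $\K$ by Lemma~\ref{L:COMPACTNESS}) and a uniform Lipschitz-in-time bound; the invariance of $\CC$ is exactly the content of the a priori bounds of Lemma~\ref{L:VELOCITY1}, and the construction is first carried out on a short interval $T=1/(2C_\sfp)$ independent of the data and then iterated. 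Your semi-implicit time discretization replaces the fixed-point step by an explicit approximation-plus-compactness argument, closer in spirit to the paper's own discrete-particle limits (Section~\ref{S:DODP}) and to the scheme of Section~\ref{SS:TDS}; it works directly on any $[0,T]$ without restarting, at the cost of tracking the piecewise-constant-in-time force interpolant and a discrete Gronwall inequality. The two proofs rest on the same two pillars — the $\Psi$-contraction of the velocity and the compactness of $\Psi$-sublevels of $\K$ via Helly — but you obtain the contraction differently: the paper uses the a.e.-in-$t$ identity $V(t)=\PROJ{\H_{X(t)}}(Y(t))$ from \eqref{eq:34} together with the Jensen-type Lemma~\ref{L:CONTRACTION2}, whereas you use the everywhere-in-$t$ identity $V(t)=\PROJ{T_{X(t)}\K}(Y(t))$ from \eqref{E:TANGENT}, decompose $\Omega_{X(t)}$ into maximal intervals, and apply the Natile--Savar\'e contraction of Lemma~\ref{L:CONTRACTION} on each piece (valid there after an affine change of variables, since the tangent-cone constraint decouples across the components and the projection fixes $0$). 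Your variant is slightly stronger (it holds for every $t$, not just a.e.\ $t$) and both suffice; note also that the second half of the theorem, being a statement about \emph{all} Lagrangian solutions, should be obtained by applying your a priori estimate directly to an arbitrary solution rather than by lower semicontinuity along the approximating sequence — your estimate does apply verbatim in that setting, so this is only a matter of presentation.
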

%
\begin{proof} 
It suffices to show that there exists a solution to \eqref{eq:26} in a
bounded interval $[0,T]$ with $T$ independent of the initial
condition.
We will choose 
$$
T:={1 \over 2C_\sfp} <1.
$$ 
where $C_\sfp$ is the constant 
of  (\ref{D:LINEARBOUNDEDNESS}),
which is not restrictive to assume greater than $1$.

We consider the following operators defined in $\C([0,T];\L^2(\Omega))$:
the first one $\OP_1$ maps $W\in \C([0,T];\L^2(\Omega))$ into $Y:=\OP_1(W)$
defined by
\begin{equation}
  \label{eq:46}
  Y(t):=\bar V+\int_0^t F[W(s)]\, \d s;
\end{equation}
the second one, $\OP_2$, maps $Y\in \C([0,T];L^2(\Omega))$ into the
solution $X=\OP_2(Y)$ of the differential inclusion
\begin{equation}
  \label{eq:47}
  \dot X+\partial I_\K(X)\ni Y,\quad X(0)=\bar X.
\end{equation}
Both of them are continuous, since 
\begin{equation}
  \label{eq:48}
  \|\OP_1(W_1)-\OP_2(W_2)\|_\infty\LS 
  T\omega \big(\|W_1-W_2\|_\infty\big)
\end{equation}
(where we denoted by $\|\cdot\|_\infty$ the usual $\sup$ norm in
$\C([0,T];\L^2(\Omega))$)
and
\begin{equation}
  \label{eq:49}
  \|\OP_2(Y_1)-\OP_2(Y_2)\|_\infty\LS T \|Y_1-Y_2\|_\infty.
\end{equation}
We want to show that $\OP:=\OP_2\circ \OP_1$ has a fixed point $X$,
which is 
a Lagrangian solution with initial data
$(\bar X,\bar V)$.
\EEE
We may use de la Vall\'ee Poussin Theorem and Lemma \ref{L:DOUBLE} to obtain $\psi \colon \R \longrightarrow
[0,\infty)$ satisfying \eqref{EQ:130}/\eqref{E:131} for some $q>2$ and
(using the notation (\ref{E:113}))
\begin{equation}
    \lim_{r\rightarrow\infty} \frac{\psi(r)}{r^\p} = \infty, \quad
    \Psi[\bar X]+ \Psi[\bar V] <\infty.
\label{EQ:128new}
\end{equation} %
Choose $m$ large enough so that  
\begin{equation} \label{E:CONDITIONonM}
\Bigl( ||\bar V||_2 +  2 C_\sfp (1+m+ ||\bar V||_2)\Bigr)T \leq m  ,
\end{equation}
and let 
$$
\K(\Psi)=\{ W \in \K \colon  ||W-\bar X||_{\L^2(\Omega)} \leq m,\;\; \Psi\Bigl( W-\bar X \Bigr) \leq D  \}
$$ 
where 
$$
D=T^{q-1}\Bigl[\Lambda(T) +  (12 C_\sfp T)^{q} T 
\Bigl( \psi(m)+ \Psi(\bar X) \Bigr)\Bigr].
$$
and
$$ 
\Lambda(t):= 2^{q-1}\Bigl( t \Psi(\bar V) + 
            ( 3 \EEE
T)^{q-1} C^q \psi(1){t^2 \over 2} \Bigr). 
$$ 
We eventually set  
$$
\begin{aligned}
  \CC=\Big\{X \in \C([0,T]; \L^2(\Omega)) \colon &X(t) \in \K(\Psi) \;
  \forall \, t \in [0,T] ,\\
  &\|X(s)-X(r)\|_{\L^2(\Omega)}\LS \frac mT
  |r-s| 
  \; \forall \,r,s\in [0,T]\Big\},
\end{aligned}
$$
which by Arzel\`a-Ascoli Theorem is a nonempty, compact, and convex
subset of $\C([0,T];\L^2(\Omega))$.
In light of the Schauder Fixed Point Theorem it suffices to show that 
$\OP$ maps $\CC$ into itself.

\EEE
Let $W \in \CC$ and set $Y=\OP_1(W), X=\OP_2(Y)=\OP(W)$   
with $V=\frac {\d^+}{\d t}X$.
We exploit 
Lemma \ref{L:VELOCITY1} and equation (\ref{E:CONDITIONonV1}) below
with Jensen's inequality to obtain
\begin{eqnarray}
\Psi\Bigl( X(t)-\bar X \Bigr) 
               &= & 
                \Psi\Bigl( \int_0^t V(s)ds \Bigr)  \nonumber\\
               &\leq & 
               T^{q-1}\Bigl[\Lambda(t) + ( 6 C_\sfp T^2)^{q} 
               \int_0^t \Psi(W(l))dl\Bigr] \nonumber\\ 
               &\leq & 
               T^{q-1}\Bigl[\Lambda(t) +  (12 C_\sfp T)^{q}  \int_0^t \Bigl( \Psi(W(l)-\bar X)+
               \Psi(\bar X) \Bigr) dl \Bigr]\nonumber\\ 
              &\leq & 
               T^{q-1}\Bigl[\Lambda(t) +  (12 C_\sfp T)^{q} T   
               \Bigl( \Psi(m)+
                 \Psi(\bar X) \Bigr) \Bigr]\leq D.  \label{E:CONDITIONonX3}
\end{eqnarray}
Using  $\psi(r)=r^\p$ in (\ref{E:CONDITIONonV-1}) we have 
$$
\|V(t)\|_2
\leq  \big\|\bar V + \int_0^t F[W(s)] ds\big\|_2 
 \leq  \| \bar V\|_\p + \int_0^t \big\|F[W(s)]\big\|_\p ds,
$$ 
where we have used the $\H_X$--Contraction property in Lemma
\ref{L:CONTRACTION2}.  We use that $F$ is 
also bounded 
 with constant $C_2=2C_\sfp$ 
and that $Y \in \CC$ to  conclude that 
\begin{equation} \label{E:CONDITIONonX4.5}
  \|V(t)\|_\p \leq  \| \bar V\|_\p + 2C_\sfp(1+|m|+ \|\bar X\|_\p) \leq {m \over T}.
\end{equation}
Thus,   for every $0\LS r\LS s\LS T$
$$
\|X(r)-X(s)\|_\p\LS \int_r^s \|V(s)\|\,\d s\LS \frac mT|r-s|,\quad
\|X(s)-\bar X\|_\p\LS m.
$$
These prove that $X \in \CC.$

Concerning the estimates \eqref{E:BOUNDLP} and \eqref{E:BOUNDPSI}   
we simply set $W=X$ in the next Lemma \ref{L:VELOCITY1} and apply Gronwall's lemma.
\end{proof}
%


We conclude this section with the uniform bounds for solutions to
differential inclusions invoked by the
previous fixed point argument. 
In view of the next applications, we state them in a slightly more
general form.

\begin{lemma}[A priori bounds]\label{L:VELOCITY1} 
Let $F\colon \K \longrightarrow\L^\p(\Omega)$ be pointwise linearly
bounded so that there exists $C_\sfp>0$ 
such that (\ref{D:LINEARBOUNDEDNESS}) holds. Let $\psi \colon \R \longrightarrow [0,+\infty)$ be an integrand satisfying \eqref{EQ:130} and
\eqref{E:131} for some $q\GS 1$. Let $X \in \LIP(0,T; \L^2(\Omega))$ and $Z,W \in \L^\infty(0,T; \L^2(\Omega))$ be such that $X(0)=\bar X$ and 
\begin{equation} \label{E:DIFFERENTIALX1}
  V(t)+ \partial I_\K(X(t)) \ni \bar V + \int_0^t Z(s)\,\d s \quad \forall t\; \in [0,T)
\end{equation}
where 
$$
V(t)= {\d^+X \over \d t}(t)\quad \hbox{and} \quad  Z(s)\prec  F[W(s)]. 
$$
Then for a.e. $t \in (0,T)$
\begin{equation}\label{E:CONDITIONonV-1}
 \Psi( V(t)) \leq \Psi \Bigl(\bar V + \int_0^t Z(s)ds\Bigr),
\end{equation}
\begin{equation}\label{E:CONDITIONonV0} 
 \Psi( V(t)) \leq 
2^{q-1}\biggl( \Psi(\bar V) + ( 3\EEE t)^{q-1} C_\sfp^q \Big(
\psi(1)t +  2\EEE
\int_0^t \Psi(W(s))ds \Big)
\biggr)
\end{equation} 
and 
\begin{equation}\label{E:BOUNDonX0}
\Psi(X(t)) \leq  2^{q-1} 
            \biggl( 
                   \Psi(\bar X) + 
                   T^q \Lambda(T) + ( 6 C_\sfp T^2)^{q}\EEE \int_0^t \Psi(W(s)) ds 
            \biggr) 
\end{equation}
\end{lemma}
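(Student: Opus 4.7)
The plan is to prove the three inequalities in sequence, setting $Y(t) := \bar V + \int_0^t Z(s)\,\d s$ throughout. The key observation for \eqref{E:CONDITIONonV-1} is that, because $X$ is Lipschitz, the right-derivative $V(t)$ is the minimal-norm selection from the closed convex set $Y(t)-\partial I_\K(X(t))$ at a.e.\ $t$, by the same argument used to establish \eqref{E:MINIMAL}--\eqref{eq:34} in Theorem~\ref{T:BASIC}. Since $N_{X(t)}\K \subset \H_{X(t)}^\perp$ by \eqref{E:ORTHO}, this identifies $V(t)$ with the orthogonal projection $P_{\H_{X(t)}}(Y(t))$ for a.e.\ $t$. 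The $\H_X$-contraction Lemma~\ref{L:CONTRACTION2} applied to $Y(t)$ then yields $\Psi(V(t)) \leq \Psi(Y(t))$, which is exactly \eqref{E:CONDITIONonV-1}.

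For \eqref{E:CONDITIONonV0}, I first split $Y(t) = \bar V + (Y(t)-\bar V)$ and apply the two-term doubling inequality \eqref{EQ:112} to peel off $\Psi(\bar V)$ with a factor $2^{q-1}$. The remaining term requires bounding $\psi(\int_0^t Z(s,m)\,\d s)$ pointwise in $m$, which I would do by combining Jensen's inequality on the probability measure $\d s/t$ with the homogeneous scaling $\psi(\lambda r) \leq \max(\lambda,\lambda^q)\psi(r)$ (the latter follows from \eqref{E:131} for $\lambda\geq 1$ and from $\psi(0)=0$ together with convexity for $\lambda\leq 1$). The second input is a pointwise estimate on $\psi(Z(s,m))$: from $Z(s) \prec F[W(s)]$ and the pointwise linear bound \eqref{D:LINEARBOUNDEDNESS}, using evenness and monotonicity of $\psi$ on $[0,\infty)$, a three-term doubling produces the factor $3^{q-1}$, and $\psi(C_\sfp r)\leq C_\sfp^q\psi(r)$ (valid since $C_\sfp\geq 1$) produces the $C_\sfp^q$ factor; Jensen's inequality $\psi(\|W(s)\|_{L^1})\leq \Psi(W(s))$ (using $|\Omega|=1$) then yields the bound $\int_\Omega \psi(Z(s,m))\,\d m \leq C_\sfp^q 3^{q-1}(\psi(1) + 2\Psi(W(s)))$. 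Assembling these estimates in the desired order gives \eqref{E:CONDITIONonV0}.

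For \eqref{E:BOUNDonX0}, I would write $X(t) = \bar X + \int_0^t V(s)\,\d s$ and repeat the doubling/Jensen/scaling machinery with $V(s)$ in the role of $Z(s,\cdot)$, now substituting the already-proved bound \eqref{E:CONDITIONonV0} for $\Psi(V(s))$ under the integral sign. The terms independent of $W$ collect into $T^q\Lambda(T)$ (recall $\Lambda(T)$ is designed precisely to absorb $\Psi(\bar V)$ and the $\psi(1)$-contribution after one additional time integration), while the $W$-dependent terms combine, after the integration, into the prefactor $(6C_\sfp T^2)^q$ multiplying $\int_0^t \Psi(W(s))\,\d s$.

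The conceptual heart of the argument is the first step, namely the identification $V(t) = P_{\H_{X(t)}}(Y(t))$ a.e., which reduces the problem to the contraction property for $P_{\H_X}$; once this is in hand, the rest is a careful accounting of constants under the doubling condition. The only mildly delicate point is the uniform scaling $\psi(\lambda r) \leq \max(\lambda,\lambda^q)\psi(r)$, which must be applied once for the integral $\int_0^t Z(s,m)\,\d s$ and again for $\int_0^t V(s)\,\d s$; keeping track of the resulting powers of $t$ and $T$ is tedious but routine.
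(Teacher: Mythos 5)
Your proposal is correct and follows essentially the same route as the paper: the identity $V(t)=\PROJ{\H_{X(t)}}(Y(t))$ for a.e.\ $t$ (i.e.\ \eqref{eq:34}) combined with the $\H_X$-contraction of Lemma~\ref{L:CONTRACTION2} gives \eqref{E:CONDITIONonV-1}, and the doubling inequality \eqref{EQ:112}, Jensen's inequality, and the pointwise bound \eqref{D:LINEARBOUNDEDNESS} together with $Z(s)\prec F[W(s)]$ give \eqref{E:CONDITIONonV0} and, after one further time integration, \eqref{E:BOUNDonX0}. One remark on your justification of the first step: the inclusion $N_{X(t)}\K\subset\H_{X(t)}^\perp$ of \eqref{E:ORTHO} does not by itself convert the minimal selection in $Y(t)-\partial I_\K(X(t))$ into $\PROJ{\H_{X(t)}}(Y(t))$ (for a cone strictly smaller than $\H_{X(t)}^\perp$ the element of minimal norm would be a different vector); what is actually used is that the closed linear span of $N_{X(t)}\K$ \emph{equals} $\H_{X(t)}^\perp$ (Lemma~\ref{L:LINEAR}), so that at a.e.\ differentiability point of the Lipschitz curve $X$ the derivative is the orthogonal projection of $Y(t)$ onto $\H_{X(t)}$; since you invoke \eqref{eq:34} directly, this affects only your stated justification, not the validity of the argument.
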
  
%
%
\begin{proof}
Recalling Theorem \ref{T:BASIC} and \eqref{eq:34},
equation (\ref{E:DIFFERENTIALX1}) yields
\begin{displaymath}
  V(t)= 
  P_{\H_{X(t)}}\Bigl(\bar V + \int_0^t
  Z(s)ds\Bigr)
\quad \text{for every $t \in \calT^0$},
\end{displaymath}
where $\calT^0$ has full measure in $(0,T)$.
Hence, by Lemma \ref{L:CONTRACTION2} 
$$
 \Psi( V(t))= \Psi \biggl( P_{\H_{X(t)}}\Bigl(\bar V + \int_0^t Z(s)ds\Bigr) \biggr) \leq 
\Psi \Bigl(\bar V + \int_0^t Z(s)ds\Bigr),
$$   
which proves (\ref{E:CONDITIONonV-1}).

Using (\ref{EQ:112}) and Jensen's inequality we obtain
\begin{equation}\label{E:LATE1}
 \Psi( V(t)) \leq 2^{q-1}\Bigl( \Psi(\bar V) + \Psi(\int_0^t Z(s)ds)\Bigr) \leq 
2^{q-1}\Bigl( \Psi(\bar V) + t^{q-1} \int_0^t \Psi(Z(s)) ds\Bigr).
\end{equation}
We use the fact that $F$ is linearly bounded, $\psi$ is even,  and $\psi(\|W\|_1)\LS \Psi(W)$ by Jensen's inequality, 
\EEE
to find to obtain that for all $s\GS 0$ 
\begin{equation}
 \Psi(Z(s))  \leq \Psi[F(W(s))] 
 \LS \Psi[C_\sfp (1+|W(s)|+\|W\|_1)]
 \LS 3^{q-1}  C_\sfp^q \Big( \psi(1) + 2 \Psi[W(s)] \Big).
\label{E:BOUNDPSIWIL1}
\end{equation}
The first inequality in Equation (\ref{E:BOUNDPSIWIL1}) was obtained via Lemma \ref{L:CONTRACTION2}. We combine Equations (\ref{E:LATE1}, \ref{E:BOUNDPSIWIL1}) to obtain Equation (\ref{E:CONDITIONonV0}). 
By Equation (\ref{E:CONDITIONonV0}) 
\begin{eqnarray}
\int_0^t  \Psi( V(s))ds 
        &\leq &  
                2^{q-1}\biggl( t \Psi(\bar V) + 
                                ( 3 T)^{q-1} C_\sfp^q \Big( \psi(1){t^2 \over 2}           
                                +  2 \int_0^t \int_0^s \Psi(W(l))dl \Big) 
                         \biggr)\nonumber\\ 
                         & = & \Lambda(t) +  2 ( 6  T)^{q-1} C_\sfp^q \int_0^t (t-l) \Psi(W(l))dl
         \nonumber\\
        & \leq & \Lambda(t) + ( 6 T C_\sfp)^{q}\int_0^t \Psi(W(l))dl
         \label{E:CONDITIONonV1}  
\end{eqnarray} 
where  
$$ 
\Lambda(t):= 2^{q-1}\Bigl( t \Psi(\bar V) + 
            ( 3 T)^{q-1} C_\sfp^q \psi(1){t^2 \over 2} \Bigr)
$$
We have  
$$
\Psi(X(t))= \Psi \Bigl(\bar X + \int_0^t V(s)ds \Bigr) \leq 
2^{q-1} \Bigl( \Psi(\bar X) + t^{q-1} \int_0^t \Psi(V(s))ds\Bigr), 
$$
where have used (\ref{EQ:112}) and then Jensen's inequality.
This, together with (\ref{E:CONDITIONonV1}) yields (\ref{E:BOUNDonX0}).
\end{proof}

\section{The semigroup property and generalized Lagrangian solutions}
\label{S:SGL}
We have seen that 
Lagrangian solution may fail to satisfy the semigroup property 
in the natural phase space for the variables $(X,V)$, $V=\dot X$
(stated in Proposition \eqref{P:SEMIGROUP} for sticky Lagrangian
solutions). In fact, the formulation given by the system \eqref{eq:26}
shows that the natural variables for the semigroup property are the
couple $(X,Y)$. 

This motivates an alternate notion of solution (still linked to
\eqref{E:CONS2})
which tries to recover
a mild semigroup property, at the price of loosing uniqueness 
with respect to initial data.

Recall that for any transport map $X\in\K$ the orthogonal projection
$\PROJ{\H_X}$ onto the closed subspace $\H_X \subset \L^2(\Omega)$ leaves the
given function unchanged in $\Omega\setminus \Omega_X$ and replaces it with its
average in every maximal interval $(\alpha, \beta) \subset\Omega_X$; see
\eqref{E:CONSTANT}.
As a consequence, the function $\PROJ{\H_X}(F[X])$ is
constant wherever $X$ is. 

\begin{definition}
  \label{D:GENERALIZED}
  A \emph{generalized solution} to \eqref{E:FODI} is a curve $X\in
\LIP_{\rm loc}([0,\infty);\K)$ such that 
\begin{enumerate}
\item{\em Differential inclusion:}
\begin{equation}
  \label{eq:28}
  \dot X(t)+\partial I_\K(X(t))\ni \bar V+\int_0^t Z(s)\,
  \d s
  \quad\text{for a.e.\ $t\in (0,\infty)$},
\end{equation}
for some map $Z\in L^\infty_{\rm loc}([0,\infty);\L^2(\Omega))$ with
\begin{equation}
  Z-F[X(t)]\in \H_{X(t)}^\perp
  \quad\text{and}\quad
  Z \prec F[X(t)]\quad \text{for a.e.\ $t\in (0,\infty)$}.
\label{E:DOMINO}
\end{equation}
\item {\em Semigroup property:}
For all $t\GS t_1\GS 0$ the right derivative $V=\frac {\d^+}{\d t}X$ satisfies
\begin{equation}
  V(t) + \partial I_\K(X(t)) \ni V(t_1) + \int_{t_1}^{t} Z(s) \,\d s,
\label{E:FODI2}
\end{equation}
\item {\em Projection formula:}
For all $t\GS t_1\GS 0$ 
\begin{equation}
  X(t_2) = \PROJ{\K} \bigg( 
      X(t_1) + (t_2-t_1) V(t_1) + \int_{t_1}^{t_2} (t_2-s) Z(s) \,\d s \bigg),
\label{E:PROJREP2}
\end{equation}
\end{enumerate}
\end{definition}
\EEE
Note that for generalized Lagrangian solutions the semigroup property
and the projection one \eqref{E:PROJREP2} 
are part of the definition, while for sticky Lagrangian
solutions \eqref{E:PROJREP1} and \eqref{E:PROJREP3} are consequences of the
monotonicity property \eqref{E:MODI1}. The obvious choice in \eqref{E:DOMINO} is
$Z(t) := F[X(t)]$ for all times $t\GS 0$, which also shows that any sticky
Lagrangian solution is a weak solution. 
\begin{remark}
  If one is ultimately interested only in the existence of
  solutions to the conservation law \eqref{E:CONS2}, for this purpose
  any $Z$ stisfying \eqref{E:DOMINO} is sufficient. In fact, we proved
  in Theorem \ref{T:BASIC} that if the force functional $F[X]$ is
  induced by an Eulerian force field $f[\RHO]$, so that \eqref{eq:39}
  holds whenever $X\in\K$ and $X_{\#}(\leb) = \RHO$, then any strong
  Lagrangian solution yields a solution of the conservation law
  \eqref{E:CONS2}. The same argument works for weak Lagrangian
  solutions. Because of \eqref{E:DOMINO} we have that
  $\PROJ{\H_{X(t)}}(Z(t)) = \PROJ{\H_{X(t)}}(F[X(t)])$. On the other
  hand, it holds
$$
\int_\Omega \varphi(X(m)) F[X](m) \,\d m = \int_\Omega \varphi(X(m))
\PROJ{\H_X}(F[X])(m) \,\d m
$$
for all $\varphi\in\D(\R)$, with a similar formula for $Z$ in place of
$F[X]$.  Then the argument on page \pageref{E:MKO} can be adapted to
prove the claim; see in particular \eqref{E:ARG}.
\end{remark} 
Since $X$ is everywhere right differentiable, we have $V(t)\in
T_{X(t)}\K$ for every $t\GS 0$, so that \eqref{E:FODI2} yields
\begin{equation}
  V(t) = \PROJ{T_{X(t)}\K} \bigg( 
      V(t_1) + \int_{t_1}^{t_2} F[X(s)] \,\d s \bigg)\quad\text{for
        every }t\GS t_1\ge0,
\label{E:PROJREPTAN}
\end{equation}
which also yields 
\begin{equation}
  V(t) = \PROJ{\H_{X(t)}} \bigg( 
      V(t_1) + \int_{t_1}^{t_2} F[X(s)] \,\d s \bigg)\quad\text{for
        almost every }t\GS t_1\ge0.
\label{E:PROJREP4}
\end{equation} 
It is immediate to check that any solution is also a generalized
solution, corresponding to the choice $H(t):=0$.
By introducing the new variable 
\begin{displaymath}
  Y(t):=\bar V+\int_0^t Z(s)\,\d  s 
\end{displaymath}
we easily see that \eqref{eq:28} is equivalent to the evolution
system
\begin{equation}
  \label{eq:26tris}
  \left\{
    \begin{aligned}
      \dot{X}(t) + \partial I_\K(X(t)) &\ni Y(t),\\
      \dot Y(t)&=Z(t),\\
      Z(t)-F[X(t)]&\in \H_{X(t)}^\perp,\\
      Z(t)&\prec F[X(t)]
    \end{aligned}
    \right.\quad \text{for $t\GS 0$,}\quad
    (X(0),Y(0))=(\bar X,\bar V),
\end{equation}
where $H\equiv0$ in the case of \eqref{E:FODI}.

\EEE



\subsection{Stability of generalized Lagrangian solutions}\label{SS:C}

In this section, we will prove a stability result 
for generalized Lagrangian solutions. Instead of relying on
a semigroup estimate, strong compactness now follows from an argument based on
Helly's theorem (recall Lemma \ref{L:COMPACTNESS}) and 
on the closure properties of the map $X \mapsto \PROJ{\H_X}(F[X])$ for
$X\in\K$.

\begin{lemma}\label{L:CLOSURE}
Consider $\{(X_n,Z_n,F_n)\} \subset \L^\p(\Omega)$ with 
$$
  X_n\in\K,
  \quad
  Z_n - F_n \in \H_{X_n}^\perp,
$$
If $X_n \longrightarrow X$ strongly and $(Z_n,F_n)\WEAK (Z,F)$ weakly in
$\L^\p(\Omega)$, then 
\begin{gather}
  Z - F\in \H_X^\perp,
\label{E:88}\\
  Z \prec F
  \quad\text{if $F_n\longrightarrow F$ strongly in $\L^1(\Omega)$ and}  \quad Z_n \prec F_n.
\label{E:105}
\end{gather}
Analogously, for any $T>0$ consider $\{(X_n,Z_n,F_n)\} \subset
\L^\p((0,T),\L^\p(\Omega))$ with
$$
  X_n(t)\in\K,
  \quad
  Z_n(t) - F_n(t) \in \H_{X_n(t)}^\perp,
  \quad
  Z_n(t) \prec F_n(t)
  \quad\text{for a.e.\ $t\in(0,T)$.}
$$
If $X_n\longrightarrow X$ strongly and $(Z_n,F_n)\WEAK (Z,F)$ weakly
in $\L^2((0,T),\L^2(\Omega))$, then
\begin{gather*}
  Z(t) - F(t)\in \H_{X(t)}^\perp
  \quad\text{a.e.,}
\\
  Z(t)\prec F(t)
  \quad\text{a.e.\quad if $F_n\longrightarrow F$ strongly in 
    $\L^1((0,T),\L^1(\Omega))$},\ 
  Z_n(t)\prec F_n(t)\quad\text{a.e.}
\end{gather*}
\end{lemma}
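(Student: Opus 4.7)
The two identities reduce to weak–strong passages to the limit, exploiting the characterizations of $\H_X^\perp$ and of the relation $\prec$ established in Section~\ref{SS:MP}. For~\eqref{E:88}, I would use the description provided by~\eqref{eq:35}: $W\in\H_X^\perp$ if and only if $\int_\Omega W\,\varphi(X)\,\d m=0$ for every $\varphi\in\C_b(\R)$. The hypothesis $Z_n-F_n\in\H_{X_n}^\perp$ gives $\int_\Omega(Z_n-F_n)\,\varphi(X_n)\,\d m=0$ for every $n$. Strong convergence $X_n\to X$ in $\L^\p(\Omega)$ implies convergence in measure, hence by continuity and boundedness of $\varphi$ combined with dominated convergence, $\varphi(X_n)\to\varphi(X)$ strongly in every $\L^q(\Omega)$ with $q<\infty$. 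Writing
\begin{equation*}
\int_\Omega(Z_n-F_n)\,\varphi(X_n)\,\d m=\int_\Omega(Z_n-F_n)\,\varphi(X)\,\d m+\int_\Omega(Z_n-F_n)\bigl(\varphi(X_n)-\varphi(X)\bigr)\,\d m,
\end{equation*}
the first term converges to $\int_\Omega(Z-F)\,\varphi(X)\,\d m$ by weak convergence tested against $\varphi(X)\in\L^\infty(\Omega)$, while the second vanishes by H\"older together with the $\L^\p$-boundedness of $\{Z_n-F_n\}$. Since $\varphi$ was arbitrary in $\C_b(\R)$, \eqref{E:88} follows.

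For~\eqref{E:105}, I would approximate an arbitrary convex lsc $\psi\colon\R\to[0,\infty]$ from below by its Moreau--Yosida regularizations
\begin{equation*}
\psi_k(r):=\inf_{s\in\R}\bigl(\psi(s)+k|r-s|\bigr),
\end{equation*}
each of which is nonnegative, convex, $k$-Lipschitz, and satisfies $\psi_k\uparrow\psi$ pointwise on $\R$ by lower semicontinuity of $\psi$. For fixed $k$, the chain
\begin{equation*}
\int_\Omega\psi_k(Z)\,\d m\,\LS\,\liminf_{n}\int_\Omega\psi_k(Z_n)\,\d m\,\LS\,\liminf_{n}\int_\Omega\psi_k(F_n)\,\d m\,=\,\int_\Omega\psi_k(F)\,\d m
\end{equation*}
closes the argument: the first inequality is the classical weak lower semicontinuity of convex integral functionals in $\L^1(\Omega)$, into which the weak $\L^\p$ convergence embeds on the bounded domain~$\Omega$; the second is the hypothesis $Z_n\prec F_n$ applied to the convex nonnegative~$\psi_k$; the equality uses the Lipschitz bound $|\psi_k(F_n)-\psi_k(F)|\LS k\,|F_n-F|$ and the strong $\L^1$ convergence of $F_n$ to~$F$. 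Letting $k\to\infty$ and invoking monotone convergence on both extremes yields $\int_\Omega\psi(Z)\,\d m\LS\int_\Omega\psi(F)\,\d m$, i.e.\ $Z\prec F$.

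The time-dependent statement follows by integrating both arguments against an arbitrary nonnegative $\eta\in\C_c((0,T))$, using that strong convergence of $X_n$ in $\L^\p((0,T),\L^\p(\Omega))$ produces, along a subsequence, a.e.\ pointwise convergence and hence strong convergence of $\varphi(X_n)$ to $\varphi(X)$ in $\L^q_t\L^q_m$ for every finite $q$. For the first claim this yields $\int_0^T\eta(t)\int_\Omega(Z(t)-F(t))\,\varphi(X(t))\,\d m\,\d t=0$ for every admissible $\eta$ and every $\varphi\in\C_b(\R)$; ranging over a countable family of $\varphi$'s sufficient to determine the signed measures $X(t)_\#\bigl((Z(t)-F(t))\,\d m\bigr)$ yields the pointwise identity for a.e.~$t$. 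For the second, the same Moreau--Yosida chain integrated in time gives $\int_\Omega\psi(Z(t))\,\d m\LS\int_\Omega\psi(F(t))\,\d m$ outside a $\psi$-dependent negligible set; applying this to a countable family characterizing the convex order, such as $\{(\cdot-r)^+,(r-\cdot)^+\colon r\in\mathbb{Q}\}$, delivers $Z(t)\prec F(t)$ for almost every $t$ after taking a countable union of null sets. The principal obstacle is the step involving a general, possibly superlinear, convex lsc $\psi$: such $\psi$ need not be continuous with respect to strong $\L^1$ convergence of $F_n$, and the Lipschitz Moreau--Yosida truncation is precisely the device that repairs this.
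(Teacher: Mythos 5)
Your argument is correct and, for the time-independent claims, follows essentially the same route as the paper: \eqref{E:88} is obtained by testing $Z_n-F_n$ against $\varphi(X_n)$ for $\varphi\in\C_b(\R)$ and passing to the limit (the paper uses the same identity \eqref{E:83} and the density of $\{\varphi\circ X\}$ in $\H_X$), and \eqref{E:105} is obtained from weak lower semicontinuity on the $Z$-side and strong $\L^1$ continuity on the $F$-side for convex test functions of linear growth, followed by monotone approximation. Your Moreau--Yosida (Pasch--Hausdorff) regularization $\psi_k$ is precisely an explicit implementation of the monotone approximation that the paper leaves as a one-line remark, and it correctly handles possibly superlinear or extended-valued $\psi$; the observation that constants integrate identically (since $\leb(\Omega)=1$) and that the generators $(\cdot-r)^+$, $(r-\cdot)^+$ suffice for the order $\prec$ as defined here (with \emph{nonnegative} convex test functions, so no equal-means condition is needed) is sound. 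The only genuine divergence is in the time-dependent statement: the paper disposes of it by invoking Ioffe's theorem, whereas you localize in time by testing against $\eta\in\C_c((0,T))$ and then exhaust a countable family of spatial test objects ($\varphi$'s determining the push-forward measure, and rational call/put functions determining $\prec$) to extract the a.e.-in-$t$ conclusions. Your route is more elementary and self-contained, at the cost of having to justify that countably many test functions characterize both $\H_{X(t)}^\perp$ and the convex order; the paper's appeal to Ioffe buys brevity but hides the measurable-selection/lower-semicontinuity machinery. Both are valid.
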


\begin{proof}
By assumption, we know that
\begin{equation}
  \int_\Omega Z_n(m) \varphi(X_n(m)) \,\d m 
    = \int_\Omega F_n(m) \varphi(X_n(m)) \,\d m
\label{E:83}
\end{equation}
for every $\varphi\in\C_b(\R)$. Passing to the limit in \eqref{E:83} we get
$$
  \int_\Omega Z(m) \varphi(X(m)) \,\d m 
    = \int_\Omega F(m) \varphi(X(m)) \,\d m,
$$
which yields \eqref{E:88} since the set $\{\varphi\circ X \colon \varphi\in
\C_b(\R)\}$ is dense in $\H_X$.

In order to prove \eqref{E:105} we pass to the limit in the inequality 
$$
  \int_\Omega \psi(Z_n(m)) \,\d m \LS \int_\Omega \psi(F_n(m)) \,\d m
$$
for arbitrary convex functions $\psi\colon\R\longrightarrow\R$ with linear
growth, noticing that
\begin{equation}
\begin{aligned}
  \int_\Omega \psi(Z(m)) \,\d m 
    &\LS \liminf_{n\rightarrow\infty} \int_\Omega \psi(Z_n(m)) \,\d m,
\\
  \int_\Omega \psi(F(m)) \,\d m
    &= \lim_{n\rightarrow\infty} \int_\Omega \psi(F_n(m)) \,\d m.
\end{aligned}
\label{E:110}
\end{equation}
The corresponding inequality for convex functions $\psi$ with arbitrary growth
at infinity can be obtained from \eqref{E:110} by monotone approximation.

The time-dependent result follows by applying Ioffe's Theorem.
\end{proof}

\begin{theorem}[Stability of Generalized Lagrangian Solutions]\label{T:STABCOMP}
Suppose that $F\colon \K \longrightarrow \L^\p(\Omega)$ is
pointwise linearly bounded and uniformly continuous. Consider a sequence
$\{X_n\}$ of weak Lagrangian solutions with initial data
$$
  \bar{X}_n \in \K 
  \quad\text{and}\quad
  \bar{V}_n \in \H_{\bar{X}_n} 
$$
that converges strongly in $\L^\p(\Omega)$ to $\bar{X} \in \K$
and $\bar{V} \in \H_{\bar{X}}$. 
Then there exists a subsequence
(still denoted by $\{X_n\}$) with the following properties:
\begin{enumerate}
\item We have $X_n(t) \longrightarrow X(t)$ in $\L^2(\Omega)$
uniformly on compact time intervals.
\item For any $T>0$ we have $V_n \longrightarrow V$ in
$\L^2((0,T),\L^2(\Omega))$.
\item The limit function $X$ is a generalized Lagrangian solution.
\end{enumerate}
\end{theorem}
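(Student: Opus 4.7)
The plan is to proceed by compactness and pass to the limit in the maximal-monotone differential inclusion, with the strong convergence of $V_n$ being the delicate step. I split the argument into four phases.

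\emph{Phase 1: uniform estimates and pointwise compactness of $X_n$.} Because $\bar X_n, \bar V_n$ converge strongly in $\L^\p(\Omega)$, de la Vall\'ee-Poussin together with Lemma~\ref{L:DOUBLE} produces an even convex $\C^1$ integrand $\psi$ with superlinear growth and the doubling property \eqref{E:131} such that $\sup_n(\Psi[\bar X_n]+\Psi[\bar V_n])<\infty$. I then apply Lemma~\ref{L:VELOCITY1} with $W=X_n,\ Z=F[X_n]$ and use Gronwall's inequality to propagate $\sup_n\sup_{t\in[0,T]}(\Psi[X_n(t)]+\Psi[V_n(t)])<\infty$ for any $T$. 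The linear bound on $F$ and $Z_n\prec F[X_n]$ imply the same uniform bound on $\Psi[Z_n(t)]$. Lemma~\ref{L:COMPACTNESS} gives pointwise compactness of $\{X_n(t)\}$ in $\L^\p(\Omega)$, while the $\L^2$-bound on $V_n$ yields equi-Lipschitz continuity of $X_n\colon[0,T]\to\L^2(\Omega)$. Ascoli-Arzel\`a then extracts $X_n\to X$ in $\C([0,T];\L^\p(\Omega))$ with $X(t)\in\K$, and after further extraction $V_n\WEAK V$ and $Z_n\WEAK Z$ weakly in $\L^2(0,T;\L^2(\Omega))$, with $V=\dot X$ a.e. Setting $Y(t):=\bar V+\int_0^t Z(s)\,\d s$, I have $Y_n(t)\WEAK Y(t)$ weakly for every $t$.

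\emph{Phase 2: the differential inclusion and the properties of $Z$.} Rewriting $V_n(t)+\partial I_\K(X_n(t))\ni Y_n(t)$ as $\xi_n:=Y_n-V_n\in\partial \hat I_\K(X_n)$ with $\hat I_\K(X):=\int_0^T I_\K(X(t))\,\d t$ on $\L^2(0,T;\L^2(\Omega))$, the strong convergence of $X_n$ and the weak convergence $\xi_n\WEAK Y-V$ let me apply maximal monotonicity of $\partial\hat I_\K$ to conclude $Y(t)-V(t)\in\partial I_\K(X(t))$ for a.e.\ $t$, which is the inclusion \eqref{eq:28}. Uniform continuity of $F$ together with $X_n\to X$ in $\C([0,T];\L^\p(\Omega))$ gives $F[X_n]\to F[X]$ strongly in $\L^2(0,T;\L^\p(\Omega))$, so the time-dependent version of Lemma~\ref{L:CLOSURE} yields $Z(t)-F[X(t)]\in\H_{X(t)}^\perp$ and $Z(t)\prec F[X(t)]$ a.e.

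\emph{Phase 3: strong convergence of $V_n$ (the main obstacle).} Since $Y_n$ in general only converges weakly, I exploit the orthogonality $V_n(t)\in\H_{X_n(t)},\ Y_n(t)-V_n(t)\in\H_{X_n(t)}^\perp$, which holds for a.e.\ $t$ by the semigroup property of the generalized solution $X_n$ together with \eqref{eq:34} of Theorem~\ref{T:BASIC}. This gives the energy identity
\begin{equation*}
\int_0^T\|V_n(t)\|^2\,\d t = \int_0^T\!\!\int_\Omega V_n(t)\,Y_n(t)\,\d m\,\d t.
\end{equation*}
Expanding $Y_n(t)=\bar V_n+\int_0^t Z_n(s)\,\d s$ and applying Fubini, the right-hand side becomes
\begin{equation*}
\int_\Omega \bar V_n\,(X_n(T)-\bar X_n)\,\d m + \int_0^T\!\!\int_\Omega Z_n(s)\,(X_n(T)-X_n(s))\,\d m\,\d s.
\end{equation*}
Uniform strong convergence $X_n\to X$ in $\C([0,T];\L^2(\Omega))$ promotes $s\mapsto X_n(T)-X_n(s)$ to strong convergence in $\L^2(0,T;\L^2(\Omega))$, which against the weakly convergent $Z_n$, together with the strong convergence of $\bar V_n,\bar X_n$, yields convergence of the right-hand side to $\int_0^T\!\int_\Omega V\,Y\,\d m\,\d t$ (by reversing Fubini). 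Using the analogous orthogonality for the limit, $V(t)\in\H_{X(t)}$ and $Y(t)-V(t)\in\H_{X(t)}^\perp$ a.e.\ (now available from Phase~2 and Theorem~\ref{T:BASIC}), this limit equals $\int_0^T\|V\|^2\,\d t$. Hence $\|V_n\|_{\L^2(0,T;\L^2)}\to\|V\|_{\L^2(0,T;\L^2)}$, which combined with $V_n\WEAK V$ upgrades to strong convergence in $\L^2(0,T;\L^2(\Omega))$.

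\emph{Phase 4: semigroup property and projection formula.} Along a further subsequence $V_n(t)\to V(t)$ strongly in $\L^2(\Omega)$ for a.e.\ $t$. For such $t_1$ and any $t\GS t_1$, pass to the limit in
\begin{equation*}
V_n(t_1)-V_n(t)+\int_{t_1}^{t} Z_n(s)\,\d s\in\partial I_\K(X_n(t))
\end{equation*}
using strong convergence of $V_n(t_1),V_n(t),X_n(t)$ and weak convergence of the integral term, to obtain the analogous inclusion for the limit. The right-continuity of $V$ guaranteed by Theorem~\ref{T:BASIC} (applied to the inclusion established in Phase~2) then extends the semigroup inclusion \eqref{E:FODI2} to every pair $t\GS t_1\ge0$. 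The projection formula \eqref{E:PROJREP2} follows by the same passage combined with the variational characterization \eqref{E:EQUIVA2}: since $X_n(t_2)\to X(t_2)$ strongly and the argument of $\PROJ{\K}$ converges weakly, the limiting variational inequality identifies $X(t_2)$ as $\PROJ{\K}$ of the claimed expression. This shows $X$ is a generalized Lagrangian solution.
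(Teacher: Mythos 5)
Your proposal is correct and follows essentially the same strategy as the paper: uniform $\Psi$-bounds via Lemma~\ref{L:VELOCITY1} and Gronwall, compactness through Lemma~\ref{L:COMPACTNESS} and Ascoli--Arzel\`a, passage to the limit in the inclusion by strong-weak closedness of the maximal monotone graph together with Lemma~\ref{L:CLOSURE}, and finally the semigroup/projection identities via a.e.\ strong convergence of $V_n(t)$ and right-continuity. The only (cosmetic) divergence is in the strong convergence of the velocities: you use the unweighted energy identity $\int_0^T\|V_n\|^2=\int_0^T\int_\Omega V_nY_n$ coming from $V_n(t)\in\H_{X_n(t)}$, $Y_n(t)-V_n(t)\in\H_{X_n(t)}^\perp$ and then Fubini, whereas the paper tests the inclusion with $\frac{\d}{\dt}\bigl[(T-t)X_n(t)\bigr]$ and integrates by parts to reach a $(T-t)$-weighted identity --- both rest on exactly the same orthogonality $\partial I_\K(X)\subset\H_X^\perp$ and the same strong-times-weak limit passage, so the arguments are interchangeable.
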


\begin{proof}
Since $(\bar{X}_n,\bar{V}_n) \longrightarrow (\bar{X},\bar{V})$ strongly
in $\L^\p(\Omega)$ we can find a convex function $\psi$ satisfying \eqref{EQ:130}
and $\lim_{r\rightarrow\infty} \psi(r)/r^\p = \infty$ such that
$$
  [\bar{X}_n] + \Psi[\bar{V}_n] \LS C
  \quad\text{for all $n$.}
$$
Here $\Psi$ denotes the functional \eqref{E:113} induced by $\psi$. By Lemma
\ref{L:DOUBLE}, it is not restric\-tive to assume that $\psi$ satisfies
\eqref{E:131}. The estimates of  Lemma \ref{L:VELOCITY1} (with $W:=X$) and Gronwall lemma yields 
\begin{equation}\label{E:BOUND.ON.X_n}
  \Psi[X_n(t)] + \Psi[V_n(t)] \LS C_T
  \quad\text{for all $t\in[0,T]$ and all $n$.}
\end{equation}
By Lemma~\ref{L:COMPACTNESS} it then follows that the $X_n$ take values in a
fixed compact subset of $\L^\p(\Omega)$ and $X_n$ are uniformly
Lipschitz continuous in $\L^\p(\Omega)$. 
Recall that pointwise linearly bounded operators $F$ are also bounded. We
can then apply Ascoli-Arzel\`a theorem to obtain a convergent subsequence, which
we still denote by $\{X_n\}$ for simplicity. The convergence is uniform in each
compact time interval and the limit function $X$ satisfies the same Lipschitz
bound.

Consider now the sequence $\{Z_n\}$ of functions given by
Definition~\ref{D:GENERALIZED}. Since $Z_n(t) \prec F[X_n(t)]$ for a.e.\ $t$ and
since $F$ is bounded, \eqref{E:BOUND.ON.X_n} implies that the $Z_n$ are
uniformly bounded in $\L^\infty((0,T),\L^\p(\Omega))$ for all $T>0$. Extracting
another subsequence if necessary, we may therefore assume that
$$
  Z_n \WEAK Z 
  \quad\text{weak* in $\L^\infty((0,T),\L^\p(\Omega))$.}
$$
On the other hand, by uniform continuity of $F$ we have that
$$
  F_n := F[X_n] \longrightarrow F[X] =: F
  \quad\text{strongly in $\L^\p((0,T),\L^\p(\Omega))$.}
$$
Lemma~\ref{L:CLOSURE} then shows that $Z$ satisfies \eqref{E:DOMINO}. 

The uniform bound on $Z_n$ implies that the maps
$$
  Y_n(t) := \bar{V}_n + \int_0^t Z_n(s) \,\d s
  \quad\text{for all $t\GS 0$}
$$
are uniformly Lipschitz continuous in each time interval $[0,T]$ with values in
$\L^\p(\Omega)$. Starting from \eqref{eq:28} and applying
standard stability results for differential inclusions (cf.\ Theorem~3.4 in
\cite{Brezis}, here the strong convergence of $X_n$ is crucial), we obtain that $X$ solves
\begin{equation}
  \dot{X}(t) + \partial I_\K(X(t)) \ni \bar{V} + \int_0^t Z(s) \,\d s
  \quad\text{for a.e.\ $t\GS 0$.}
\label{E:FODI3}
\end{equation}
In particular, the map $X$ is right-differentiable in $\L^2(\Omega)$ for each
$t\GS 0$, with right-continuous right-derivative $V$; see Propositions 3.3 and
3.4 in \cite{Brezis}. Therefore \eqref{E:FODI3} holds for all $t\GS 0$ if
$\dot{X}(t)$ is replaced by $V(t)$. We may also assume that
$$
  V_n \WEAK V
  \quad\text{weak* in $\L^\infty((0,T),\L^\p(\Omega))$}
$$
for all $T>0$ (extracting another subsequence if necessary). To
show that $V_n\longrightarrow V$ strongly in $\L^2((0,T),\L^2(\Omega))$, we
multiply the differential inclusion by
$$
  \frac{\d}{\d t} \Big( (T-t)X_n(t) \Big) = (T-t)V_n(t) - X_n(t)
$$
and integrate in time over $(0,T)\times\Omega$. Now notice that since $X_n(t),
V_n(t) \in \H_{X_n(t)}$ and since $\partial I_\K(X) \subset \H_X^\perp$
for all $X\in\K$, the subdifferential terms vanish after
integration over $\Omega$. Integrating by parts in the force term, we obtain
\begin{align*}
  & \int_0^T (T-t) \|V_n(t)\|_{\L^2(\Omega)}^2 \,\d t
    -\int_0^T \bigg( \int_\Omega V_n(t,m) X_n(t,m) \,\d m \bigg) \,\d t
\\
  &\quad = -T \int_\Omega \bar{V}_n(m) \bar{X}_n(m) \,\d m
  -  \int_0^T (T-t) \bigg( \int_\Omega Z_n(t,m) X_n(t,m) \,\d m \bigg) \,\d t.
\end{align*}
A similar identity holds in the limit. Since the sequence $\{X_n\}$ converges
strongly and the sequence $\{(V_n,Z_n)\}$ converges weakly, we can pass to the
limit and get
$$
  \lim_{n\rightarrow\infty} \int_0^T (T-t) \|V_n(t)\|_{\L^2(\Omega)}^2 \,\d t
    = \int_0^T (T-t) \|V(t)\|_{\L^2(\Omega)}^2 \,\d t
$$
for every $T>0$. This, together with (\ref{E:BOUND.ON.X_n}) yields the desired strong convergence. 
Therefore there exists an 
$\LEB^1$-negligible set $N\subset[0,\infty)$ such that (up to extraction of a
subsequence if necessary) $V_n(t)\longrightarrow V(t)$ in $\L^2(\Omega)$ for
every $t\in[0,\infty)\setminus N$. We can then pass to the limit in
 \eqref{E:FODI2} written for $(X_n,V_n)$ and obtain the corresponding inclusion
 for $(X,V)$ in $(t_1,\infty)$  for all $t_1\in [0,\infty)\setminus N$.
 Since $V$ is right-continuous, formula \eqref{E:FODI2} eventually holds
 for all $t_1\GS 0$.
 
 \eqref{E:PROJREP2} follows by the same argument, first passing to the
 limit for $t_1\in [0,\infty)\setminus N$ and recalling that 
 by \eqref{E:EQUIVA2} if $K_n=\PROJ{\K}(H_n)$ and $K_n\longrightarrow
 K$, $H_n\WEAK H$ in $\L^2(\Omega)$ then $K=\PROJ{\K}(H)$.
\end{proof}
We conclude this section with 
the main existence result for generalized Lagrangian solutions. As for
sticky 
evolutions, its proof relies on the discrete particle approach we will study in
the next section, see Remark \ref{R:EXGENERALIZED}.
\begin{theorem}[Existence of generalized Lagrangian solutions]\label{T:EXLAGR}
Let us assume that the force functional $F\colon \K \longrightarrow
\L^\p(\Omega)$ is pointwise linearly bounded and uniformly
continuous. Then
for every couple $
\bar{X} \in \K$ and $
 \bar{V} \in \H_{\bar{X}}
$
there exists a generalized Lagrangian solution with initial data $(\bar{X},\bar{V})$. 
\end{theorem}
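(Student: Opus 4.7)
The plan is to reduce the existence problem to the discrete particle setting, for which the three defining conditions of a generalized Lagrangian solution can be verified by hand, and then to pass to the limit by invoking the stability result of Theorem~\ref{T:STABCOMP}.

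First I would approximate the initial data by piecewise constant configurations: choose $\bar X_n \in \K$ with only finitely many (say $N_n$) distinct values such that $\bar X_n \to \bar X$ in $\L^\p(\Omega)$, and simultaneously approximate $\bar V \in \H_{\bar X}$ by $\bar V_n \in \H_{\bar X_n}$ piecewise constant on the same partition with $\bar V_n \to \bar V$ in $\L^\p(\Omega)$. Such $\bar V_n$ exist because elements of $\H_{\bar X}$ are of the form $v \circ \bar X$ with $v \in \L^2(\R,\bar\RHO)$, which can be approximated by piecewise constants; by discretising the partition further one can simultaneously arrange that $\bar X_n$ is strictly increasing in the sense needed for the discrete theory.

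Next, for each $n$, the discrete particle analysis of Section~\ref{S:DODP}--\ref{S:DLS} (culminating in Theorem~\ref{T:EXDISCRETE}) produces a discrete Lagrangian evolution $X_n\colon[0,\infty)\to\K$ taking values in the $N_n$-dimensional cone, together with a selection $Z_n(t)$ obtained from $F[X_n(t)]$ by averaging its values over each group of stuck particles, i.e.\ by applying the orthogonal projection onto $\H_{X_n(t)}$ on each connected component of $\Omega_{X_n(t)}$. By construction $Z_n(t)-F[X_n(t)] \in \H_{X_n(t)}^\perp$, and Jensen's inequality applied on each maximal interval of $\Omega_{X_n(t)}$ yields $Z_n(t) \prec F[X_n(t)]$, so that condition \eqref{E:DOMINO} holds. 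The discrete solution satisfies the differential inclusion \eqref{eq:28} between collisions and at impacts through the usual projection of velocities onto the tangent cone; because the finite-particle dynamics is of ODE type with instantaneous inelastic corrections, it can be restarted at any time from the current position and right-velocity, so that $X_n$ inherits both the semigroup property \eqref{E:FODI2} and the projection formula \eqref{E:PROJREP2} from their finite-dimensional counterparts proved in the discrete analysis.

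Third, I would invoke Theorem~\ref{T:STABCOMP} on the sequence $\{X_n\}$ of generalized Lagrangian solutions thus produced. The pointwise linear bound on $F$ and the uniform integrability of $(\bar X_n,\bar V_n)$ (inherited from the convergence in $\L^\p(\Omega)$ and an application of de~la~Vall\'ee Poussin together with Lemma~\ref{L:DOUBLE}) supply, via Lemma~\ref{L:VELOCITY1} and Gronwall, the uniform $\Psi$-bounds that drive the compactness in Theorem~\ref{T:STABCOMP}. Since $(\bar X_n,\bar V_n) \to (\bar X,\bar V)$ strongly in $\L^\p(\Omega)$, the theorem extracts a subsequence converging uniformly on compact time intervals to a generalized Lagrangian solution $X$ with initial data $(\bar X,\bar V)$, which is precisely the claim.

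The main obstacle I expect is the second step: justifying that the discrete particle dynamics, together with the cluster-averaged selection $Z_n$, does satisfy the semigroup property \eqref{E:FODI2} and the projection formula \eqref{E:PROJREP2} globally in time. This is nontrivial when $F$ is not sticking, because clusters can split after a collision, so one must keep track of how $\H_{X_n(t)}$ evolves and verify that both the restart-from-collision identity and the rearrangement formula go through across splitting events. Once this discrete verification is in place (which is exactly the content of Section~\ref{S:DODP}), the remainder of the argument is a straightforward application of the already-established stability Theorem~\ref{T:STABCOMP}.
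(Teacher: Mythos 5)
Your proposal is correct and follows exactly the route the paper takes (sketched in Remark~\ref{R:EXGENERALIZED}): approximate the initial data by piecewise constant configurations, obtain discrete generalized Lagrangian solutions from Theorem~\ref{T:EXDISCRETE} with the cluster-averaged selection $Z_n(t)=\PROJ{\H_{X_n(t)}}(F[X_n(t)])$, and pass to the limit via the stability Theorem~\ref{T:STABCOMP}. The verification you flag as the main obstacle is indeed precisely the content of Section~\ref{S:DODP}, so nothing further is needed.
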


\EEE

\section{Dynamics of Discrete Particles}\label{S:DODP}

We discussed in the Introduction that the conservation law \eqref{E:CONS2}
formally admits particular solutions for which the density consists of finite
linear combinations of Dirac measures; see \eqref{E:PART} above. In this
section, we will reformulate these solutions in the Lagrangian framework and
will prove their global existence. In fact, they are Lagrangian solutions
in the sense of Definitions~\ref{D:STRONG} and \ref{D:GENERALIZED}.

For every $N\in\N$ let us introduce the convex sets
\begin{gather*}
  \MM^N := \Big\{ \mm\in\R^N \colon \text{$m_i>0$ and $\sum_{i=1}^Nm_i=1$} \Big\},
\\
  \KK^N := \Big\{ \xx\in\R^N \colon x_1\LS x_2\LS \ldots \LS x_N \Big\}.
\end{gather*}
For all times $t\GS 0$, a discrete solution to \eqref{E:CONS2} of the form
\eqref{E:PART} is therefore determined by a unique number $N\in\N$ and a vector
$(\mm,\xx,\vv) \in \MM^N \times \KK^N \times \R^N$. To find a Lagrangian
representation of \eqref{E:PART} we consider a partition of $\Omega$ given
by
\begin{equation}
  0 =: w_0 < w_1 < \ldots < w_N := 1
  \quad\text{where}\quad
  w_i := \sum_{j=1}^i m_j\label{eq:50}
\end{equation}
for $i=1,\ldots,N-1$. Writing $W_i := [w_{i-1},w_i)$ we define functions
\begin{equation}
  X := \sum_{i=1}^N x_i\CHAR_{W_i}
  \quad\text{and}\quad
  V := \sum_{i=1}^N v_i\CHAR_{W_i},
\label{E:DEFXV}
\end{equation}
the (finite dimensional) Hilbert space
\begin{equation}
  \label{eq:52}
  \H_\smm:=\Big\{X = \sum_{i=1}^N x_i\CHAR_{W_i}\colon 
  \xx=(x_1,\cdots,x_N)\in \R^N\Big\}\subset \L^2(\Omega)
\end{equation}
and its closed convex cone
\begin{equation}
  \label{eq:51}
  \K_\smm:=\Big\{X = \sum_{i=1}^N x_i\CHAR_{W_i}\colon
  \xx=(x_1,\cdots,x_N)\in \KK^N\Big\} \subset \K\subset \L^2(\Omega)
\end{equation}
Then clearly $X\in\K_\smm\subset \K$ and $V\in\H_X$, and
we easily have
\begin{equation}
  \label{eq:53}
  \RHO=X_\#\leb=\sum_{i=1}^N m_i\,\delta_{x_i},\quad 
  V=v\circ X,\quad
  (\RHO v)=\sum_{i=1}^N m_i v_i \,\delta_{x_i}.
\end{equation}

\subsection{Discrete Lagrangian solutions}
\label{S:DLS}
We can reproduce at the discrete level the same approach we followed 
in Section \ref{S:SR}: we can introduce the projected forces
\begin{equation}
  \label{eq:54}
  F_\smm[X]:=\PROJ{\H_\smm}(F[X])=\sum_{i=1}^Na_{\smm,i}\CHAR_{W_i},\quad
  a_{\smm,i}=\fint_{W_i} F[X(t)](m) \,\d m,
\end{equation}
which satisfies the analogous of \eqref{eq:39}
\begin{equation}
  \label{eq:44}
  \int_\R \psi(x)f[\RHO](\d x)=\int_\Omega \psi(X)F_\smm\,\d  m,\quad
  \text{if }X\in \H_\smm,\ \RHO=X_\#\leb\text{ as in \eqref{eq:53},}
\end{equation}
and we can simply solve the differential inclusion
\begin{equation}
  \label{eq:42}
  \dot X(t)+\partial I_{\K_\smm}(X)=\bar V+\int_0^t F_{\smm}[ X(s)]\,
  ds,\quad X(0)=\bar X
\end{equation}
for given initial data $(\bar X,\bar V)\in \K_\smm\times \H_{\bar X}$.
Introducing $Y(t):=\bar V+\int_0^t F_{\smm}[ X(s)]\,
  ds=\sum_{i=1}^N y_i(t)\CHAR_{W_i}$,
we end up with the system
\begin{equation}
  \label{eq:26bis}
  \left\{
    \begin{aligned}
      \dot{X}(t) + \partial I_{\K_\smm}(X(t)) &\ni Y(t),\\
      \dot Y(t)&=F_\smm[X(t)],
    \end{aligned}
    \right.\quad \text{for $t\GS 0$,}\quad
    (X(0),Y(0))=(\bar X,\bar V),
\end{equation}
which is equivalent to \eqref{eq:20}.

If, e.g., $F$ is Lipschitz, then $F_\smm:\K_\smm\to \H_\smm$ is also
Lipschitz and 
the analogous statements of Theorems \ref{T:BASIC} and 
\ref{T:EUS}
hold at this discrete level. In particular, as in \eqref{E:TANGENT}, we have 
\begin{equation}
  \label{eq:82}
  V(t)=\frac{\d^+}{\d t}X(t)=\PROJ{T_{X(t)}\K_\smm}(Y(t));
\end{equation}
the discrete analog of Lemma \ref{L:TANGENT} thus justifies condition 
\eqref{eq:10} we introduced in the simplified situation of a collision 
of two particles. 

Let us now consider a sequence $X_n$ of discrete Lagrangian solutions of
\eqref{eq:42}
corresponding to initial data $(\bar X_n,\bar V_n)\in
\K_{\smm_n}\times \H_{\smm_n}$ strongly converging to $(\bar X,\bar
V)\in \K\times \L^2(\Omega)$. 
We want to show that $X_n\longrightarrow X$ locally uniformly in
$\C([0,\infty);\L^2(\Omega))$
where $X$ is the Lagrangian solution associated to $(\bar X,\bar V)$.
To make the analysis simpler, we will assume that the distributions of
masses $\mm_n$ give raise by \eqref{eq:50} to suffiently fine partitions of the
interval $(0,1)$, i.e.
\begin{equation}
  \label{eq:55}
  \text{for every }K\in \K\text{ there exist $K_n\in
    \K_{\smm_n}$ such that }K_n\longrightarrow K\quad\text{in }\L^2(\Omega).
\end{equation}
Since $\K_{\smm_n}\subset \K$, \eqref{eq:55} is equivalent to say that 
the sequence $\K_{\smm_n}$ Mosco-converge to $\K$ in the Hilbert space
$\L^2(\Omega)$
\cite[Section 3.3.2]{Attouch84}. 
By first approximating $C^1([0,1])$ functions (which belong to
$\K-\K$) and then applying a density argument, it is not difficult to show that
\eqref{eq:55}
implies a similar property for the closed subspaces $\H_{\smm_n}$ in $\L^2(\Omega)$,
i.e.
\begin{equation}
  \label{eq:55bis}
  \text{for every }H\in \L^2(\Omega)\text{ there exist $H_n\in
    \H_{\smm_n}$ such that }H_n\longrightarrow H\quad\text{in }\L^2(\Omega).
\end{equation}
Both \eqref{eq:55} and \eqref{eq:55bis} surely holds if, e.g., 
$$\lim_{n\to \infty} \|\mm_n\|_\infty=0,$$
where for a generic $\mm=(m_1,\cdots,m_N)\in \MM^N$ we set 
$\|\mm\|_\infty=\sup_i m_i$.

It is not surprising that we have the following approximation result:
\begin{theorem}[Convergence of discrete Lagrangian solutions]
  Let $F:\K\to \L^2(\Omega)$ be Lipschitz 
  and pointwise linearly bounded, and let $\mm_n\in \MM^{N_n}$ be a sequence satisfying \eqref{eq:55}
  and let $X_n\in \LIP_{\rm loc}([0,\infty);\K_{\smm_n})$ of discrete
  Lagrangian solutions corresponding to the initial data $(\bar X_n,\bar V_n)\in
  \K_{\smm_n}\times \H_{\smm_n}$ strongly converging to $(\bar X,\bar
  V)\in \K\times \L^2(\Omega).$ 
  Then $X_n\longrightarrow X$ locally uniformly in
  $\C([0,\infty);\L^2(\Omega))$
  where $X$ is the unique Lagrangian solution starting from $(\bar
  X,\bar V)$.
\end{theorem}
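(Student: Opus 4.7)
The plan is to combine a priori energy estimates, an Ascoli--Arzel\`a compactness argument, and the Mosco convergence of the constraint cones $\K_{\smm_n}$ to $\K$ in order to pass to the limit in the differential inclusion \eqref{eq:26bis} and identify the limit with the unique Lagrangian solution of \eqref{E:FODI}. Uniqueness (Theorem \ref{T:EUS}) will upgrade convergence of a subsequence to convergence of the whole sequence.

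First I would establish uniform a priori bounds. Since $F$ is pointwise linearly bounded, its projection $F_{\smm_n}=\PROJ{\H_{\smm_n}}(F)$ inherits the same bound by Lemma~\ref{L:CONTRACTION2}, so the discrete analogue of Lemma~\ref{L:VELOCITY1} applies to \eqref{eq:42}. Using the strong convergence $(\bar X_n,\bar V_n)\to(\bar X,\bar V)$ in $\L^2(\Omega)$, de la Vall\'ee Poussin together with Lemma~\ref{L:DOUBLE} yields a doubling integrand $\psi$ with $\lim_{r\to\infty}\psi(r)/r^2=\infty$ such that $\Psi[\bar X_n]+\Psi[\bar V_n]\leq C$. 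Lemma~\ref{L:VELOCITY1} then propagates this into a uniform bound $\Psi[X_n(t)]+\Psi[V_n(t)]\leq C_T$ for $t\in[0,T]$. Lemma~\ref{L:COMPACTNESS} applied to $X_n(t)\in\K$ gives pointwise-in-time compactness in $\L^2(\Omega)$, while the $\L^2$-bound on $V_n$ provides uniform Lipschitz equicontinuity of $t\mapsto X_n(t)$. Ascoli--Arzel\`a furnishes a subsequence (not relabeled) converging to some $X\in\LIP_{\loc}([0,\infty);\K)$ uniformly in $\L^2(\Omega)$ on compact time intervals.

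Next I would identify the limit. The hypothesis \eqref{eq:55} says precisely that $\K_{\smm_n}$ Mosco-converges to $\K$ in $\L^2(\Omega)$: the $\liminf$ condition is automatic since $\K_{\smm_n}\subset\K$ and $\K$ is weakly closed, while the recovery sequence is given by \eqref{eq:55}. Moreover \eqref{eq:55bis} (which, as indicated, follows from \eqref{eq:55} by density) implies that the orthogonal projections $\PROJ{\H_{\smm_n}}$ converge strongly to the identity on $\L^2(\Omega)$. Combined with the Lipschitz continuity of $F$ and the already established strong convergence $X_n(t)\to X(t)$, this yields
\[
F_{\smm_n}[X_n(t)]=\PROJ{\H_{\smm_n}}\bigl(F[X_n(t)]\bigr)\longrightarrow F[X(t)]\quad\text{in }\L^2(\Omega),
\]
uniformly in $t\in[0,T]$. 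Consequently the right-hand sides
\[
Y_n(t)=\bar V_n+\int_0^t F_{\smm_n}[X_n(s)]\,\d s\longrightarrow Y(t):=\bar V+\int_0^t F[X(s)]\,\d s
\]
converge strongly in $\L^2(\Omega)$, uniformly on $[0,T]$. By the Br\'ezis--Attouch stability theorem for evolution equations governed by Mosco-convergent subdifferentials (\cite{Brezis}, Theorem~3.16 and the remarks following), the curve $X$ solves the limiting inclusion $\dot X+\partial I_\K(X)\ni Y$ with $X(0)=\bar X$, i.e.\ $X$ is a Lagrangian solution of \eqref{E:FODI} with initial datum $(\bar X,\bar V)$.

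Finally, since $F$ is Lipschitz, Theorem~\ref{T:EUS} guarantees that the Lagrangian solution issuing from $(\bar X,\bar V)$ is unique. Every subsequential limit therefore coincides with the same $X$, so the whole sequence $X_n$ converges to $X$ in $\C_{\loc}([0,\infty);\L^2(\Omega))$. The main technical obstacle is the passage to the limit in the discrete differential inclusion: the subdifferentials $\partial I_{\K_{\smm_n}}$ live in different (generally non-nested in the opposite direction) constraint sets, so the crucial point is to translate the geometric condition \eqref{eq:55} into Mosco convergence of the indicator functionals, from which Attouch's theorem yields graph convergence of the subdifferentials and hence stability of the inclusion.
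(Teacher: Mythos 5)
Your proposal is correct and follows essentially the same route as the paper: uniform bounds via (the discrete analogue of) Lemma~\ref{L:VELOCITY1}, compactness as in the proof of Theorem~\ref{T:STABCOMP}, strong convergence of the projected forces via \eqref{eq:55bis}, stability of the differential inclusion under Mosco convergence of the cones $\K_{\smm_n}$, and uniqueness from Theorem~\ref{T:EUS} to upgrade subsequential to full convergence. The only cosmetic difference is the reference for the Mosco-stability step: the paper invokes \cite{Attouch84}*{Theorem 3.74} rather than a theorem from \cite{Brezis}, but the underlying principle you describe is the same.
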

\begin{proof}
  We cannot directly apply the stability estimates of Theorem
  \ref{T:EUS}, since the discrete Lagrangian solutions are associated
  to
  convex sets $\K_{\smm_n}$ depending on $n$, so we combine the
  compactness argument of the proof of Theorem \ref{T:STABCOMP}
  and a classical stability result for differential inclusion
  \cite[Theorem 3.74]{Attouch84}
  generated
  by a Mosco-converging sequence of convex sets.
  
  In fact, we can choose a convex and superquadratic functional $\psi$
  satisfying \eqref{EQ:130} such that 
  \begin{displaymath}
    \Psi[\bar X_n]+\Psi[\bar V_n]\LS C;
  \end{displaymath}
  the estimates of Lemma \ref{L:VELOCITY1} 
  (which can be extended to the discrete case) yield
  \begin{displaymath}
    \Psi[X_n(t)] + \Psi[V_n(t)] \LS C_T
    \quad\text{for all $t\in[0,T]$ and all $n$.}
  \end{displaymath}
  Arguing as in the proof of Theorem \ref{T:STABCOMP} we can find a
  subsequence (still denoted by $X_n$) locally uniformly converging to
  a limit $X\in \LIP_{\rm loc}([0,\infty);\L^2(\Omega))$ which takes
  its value in $\K$. 
  We easily get that 
  $F_{\smm_n}[X_n]\longrightarrow F[X]$ in 
  $L^2_{\rm loc}([0,\infty);\L^2(\Omega))$ since for every time $t\ge0$
  \begin{displaymath}
    \big\|F_{\smm_n}[X_n]-F[X]\big\|_{\L^2(\Omega)}\LS C
    \big\|X_n-X\big\|_{\L^2(\Omega)}+\big\|F_{\smm_n}[X]-F[X]\big\|_{\L^2(\Omega)}
  \end{displaymath}
  and $F_{\smm_n}[X]=\PROJ{\H_{\smm_n}}\big(F[X]\big)\longrightarrow
  F[X]$ in $\L^2(\Omega)$ by \eqref{eq:55bis}.
  
  It follows that 
  $$Y_n(t)=\bar V_n+\int_0^t
  F_{\smm_n}[X_n(s)]\,\d s\longrightarrow Y(t):=\bar V+\int_0^t F[X(s)]\, \d s$$
  locally uniformly in $\C([0,\infty);\L^2(\Omega))$.
  We can then apply \cite[Theorem 3.74]{Attouch84} to show that 
  the limit $X$ also satisfies the differential inclusion 
  \begin{displaymath}
    \dot X+\partial I_\K(X)\ni Y
  \end{displaymath}
  and therefore it is a Lagrangian solution associated to $(\bar
  X,\bar V)$. Since the limit is uniquely determined (by Theorem 
  \ref{T:EUS}) we conclude that the whole sequence $X_n$ converges to $X$.
\end{proof}

\subsection{A sticky evolution dynamic for discrete particles}
\label{S:SED}
In this section we will describe
a different discrete procedure to construct evolution of a finite number of particles.
In the general case, this approach will lead to
generalized Lagrangian solutions; 
when $F$ is sticking, we will obtain a sticky
evolution which in
in fact will coincide with the construction we considered
in the previous section.

We already explained the basic idea in the introduction: 
at the discrete level, a collision 
between two or more particles at some time $t'$ corresponds to the impact of the vector
$\xx$ with the boundary $\partial \KK^N$ 
(equivalently, of the
Lagrangian parametrization $X$ with the boundary of $\K_{\smm}$ in
$\H_{\smm}$): in this case, we relabel the particles and consider the
evolution for $t\GS t'$ in a reduced convex cone attached to the new
configuration
up to the next collision.

In order to get a precise description of the evolution, \EEE
let us observe that the boundary $\partial\KK^N$ of the cone $\KK^N$ 
in $\R^N$ consists of vectors whose
components are not all distinct. For any $\xx\in\partial\KK^N$ we define $I_i :=
\big\{ k \colon x_k=x_i \}$ for all $i=1,\ldots,N$. Then there exists a 
interger $N'<N$ and an increasing map 
$$
  \sigma \colon \{ 1,\cdots,N' \} \longrightarrow \{ 1,\cdots,N \}
$$
with the property that $\sigma(j) = \min I_{\sigma(j)}$ for all $j=1,\ldots,N'$.
We set 
\begin{equation}
  m'_j := \sum_{i\in I_{\sigma(j)}} m_i,
  \quad
  x'_j := x_{\sigma(j)},
  \quad\text{and}\quad
  m'_j v'_j := \sum_{i\in I_{\sigma(j)}} m_i v_i
\label{E:71}
\end{equation}
for all $j$ and obtain a new state vector $(\mm',\xx',\vv') \in \MM^{N'}\times\KK^{N'}
\times\R^{N'}$. In terms of the corresponding functions $X'\in\K$ and $V'\in
\H_{X'}$ (defined as in \eqref{E:DEFXV}), this means that $X'\in
\K_{\smm'}$, $V'\in \H_{X'}\subset \H_{\smm'}$ and 
\begin{equation}
  X' = X,
  \quad
  V' = \PROJ{\H_{X'}}(V).
\label{E:72}
\end{equation}
Starting from this remark, we can now
introduce the precise evolution algorithm for the Lagrangian
parametrization $X$.
\EEE
Assume without loss of generality that $\bar X$ does not belong to
the boundary of $\K_{\bar \smm}$ in $\H_{\bar \smm}$.
We construct a map $t\mapsto X(t)\in\K$ as follows: On the time interval $[t_0,
t_1)$, where $t_0:=0$ and $t_1>0$ is to be determined later so that $X(t)$ does not touch the boundary of $\partial\K_{\bar \smm}$
in $[t_0,t_1)$, 
\EEE
we obtain functions
$$
  X(t,\cdot) = \sum_{i=1}^N x_i(t) \CHAR_{W_i}
  \quad\text{and}\quad
  V(t,\cdot) = \sum_{i=1}^N v_i(t) \CHAR_{W_i}
$$
by solving the system
\begin{equation}
  \dot{X}(t) = V(t),
  \quad
  \dot{V}(t) = \PROJ{\H_{\bar \smm}}(F[X(t)]).
\label{E:SYSTEM0}
\end{equation}
Since $\H_{\bar \smm}=\H_{X(t)}$ in $[t_0,t_1)$, 
\EEE
we notice that the projection onto $\H_{\bar\smm}$ returns a function that is piecewise
constant on the same partition on which $(X,V)$ is constant. More precisely, we
find
$$
  \PROJ{\H_{\bar \smm}}(F[X(t)]) = \sum_{i=1}^N a_i(t) \CHAR_{W_i}
  \quad\text{where}\quad
  a_i(t) := \fint_{W_i} F[X(t)](m) \,\d m
$$
for $i=1,\ldots,N$. Hence \eqref{E:SYSTEM0} is equivalent to the system
\begin{equation}
  \label{eq:56}
  \dot{\xx}(t) = \vv(t),\quad \dot{\vv}(t) = \aa(t)\quad\text{for all $t\in[t_0,t_1)$,}
\end{equation}
which is
well-defined. The time $t_1$ is taken as the smallest $t>0$ for which
$X(t)$ hits the boundary of $\K_{\bar \smm}$ in $\H_{\bar\smm}$. 
As explained above, at time $t_1$ we can
find an integer $N'<N$ and compute a new state vector $(\bar\mm',\bar\xx',\bar\vv') \in
\M^{N'}\times\K^{N'}\times\R^{N'}$ by \eqref{E:71}. On the interval
$[t_1,t_2)$, with $t_2>t_1$ to be determined, we obtain 
$$
  X(t,\cdot) = \sum_{j=1}^{N'} x_j'(t) \CHAR_{W'_j}
  \quad\text{and}\quad
  V(t,\cdot) = \sum_{j=1}^{N'} v_j'(t) \CHAR_{W'_j}
$$
by solving \eqref{E:SYSTEM0} and \eqref{eq:56} with $\bar \mm$ replaced by
$\bar \mm'$, the initial condition $(\xx',\vv')(t_1) := (\bar
\xx',\bar \vv')$, and the new subdivision
$W'_j := [w'_{j-1},w'_j)$ defined by
$$
  0 =: w'_0 < w'_1 < \ldots < w'_{N'} := 1
  \quad\text{where}\quad
  w'_j := \sum_{k=1}^j m'_k
\quad\text{for all $j=1,\ldots,N'$.}
$$
Again the problem reduces to solving a finite
dimensional ordinary differential equation and the time $t_2$ is taken to be
the smallest $t>t_1$ for which $X(t)$ is in the boundary of $\K^{N'}$. Then we
continue in the same fashion.

We obtain an integer $K\in\N$, a sequence of ``collision times''
$$
  0 =: t_0 < t_1 < \ldots < t_{K-1} < t_K := \infty,
$$
and a pair of functions $(X,V)$ such that
\begin{equation}
  \H_{X(t)} = \H_{X(t_{k-1})},\quad
  \dot{X}(t) = V(t),
  \quad
  \dot{V}(t) = \PROJ{\H_{X(t)}}(F[X(t)])
\label{E:ODE}
\end{equation}
for all $t\in[t_{k-1}, t_k)$ and $k=1,\ldots,K$. At collision times the space
$\H_{X(t_k)}$ is strictly smaller than $\H_{X(t)}$ for all $t<t_k$, which
implies that $K\LS N$. We have
\begin{equation}
  X(t_{k}+) = X(t_{k}-),
  \quad
  V(t_{k}+) = \PROJ{\H_{X(t_k)}}(V(t_{k}-)).
\label{E:OPK}
\end{equation}
It is easy to check that the monotonicity condition \eqref{E:MODI1} is
satisfied.

\subsection{Sticky and generalized Lagrangian solutions for discrete particles}

The next Theorem shows that 
by the algorithm described in the previous section
we will obtain a generalized Lagrangian solution
in the original cone $\K$ starting from the discrete data $(\bar
X,\bar V)$;
when $F$ is sticking, this coincides with the unique sticky
Lagrangian solution.
\EEE
\begin{theorem}[Generalized and sticky Lagrangian solutions for
  discrete particles]\label{T:EXDISCRETE}
\ \\Suppose that $F\colon\K \longrightarrow \L^2(\Omega)$ is
uniformly continuous. Consider functions $(\bar{X},
\bar{V})$ of the form \eqref{E:DEFXV} for some $N\in\N$ and $(\bar{\mm},\bar{\xx},
\bar{\vv}) \in \MM^N \times\KK^N\times\R^N$. 
\begin{enumerate}
\item The
curve $(X,V)$ described by the previous section is a generalized Lagrangian
solution to \eqref{E:FODI} with initial data $(\bar{X},\bar{V})$.
\item If $F$ is
sticking, then $(X,V)$ is a sticky Lagrangian solution.
\end{enumerate}
\end{theorem}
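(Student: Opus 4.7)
The plan is to verify the three clauses of Definition~\ref{D:GENERALIZED} for part (1), then upgrade the conclusion to a Lagrangian solution in part (2) using the sticking hypothesis. Set $Z(t):=\PROJ{\H_{X(t)}}(F[X(t)])$; since the construction produces only finitely many distinct partitions of $\Omega$ (one per collision episode), $Z$ is piecewise continuous and bounded on compact time intervals. By definition $Z(t)-F[X(t)]\in\H_{X(t)}^\perp$, and Lemma~\ref{L:CONTRACTION2} gives $Z(t)\prec F[X(t)]$, so the constraints \eqref{E:DOMINO} are satisfied. The curve $X$ is Lipschitz on each $[t_{k-1},t_k]$ and continuous across every $t_k$ (only the velocity jumps), hence globally Lipschitz in each $\L^2(\Omega)$.

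The technical heart is the following jump identity at every collision time $t_k$:
\[
V(t_k-) - V(t_k+) \in N_{X(t_k)}\K.
\]
I would establish this via the convexity of primitives. Fix a maximal interval $(\alpha,\beta)\subset\Omega_{X(t_k)}$, corresponding to particles newly merged at $t_k$. Because $X(t)\in\K$, the primitive $\mathcal X(t,m):=\int_0^m X(t,s)\,\d s$ is convex in $m$ for each $t$; moreover $\mathcal X(t_k,\cdot)$ is \emph{linear} on $[\alpha,\beta]$ since $X(t_k)$ is constant there. Letting $L(t,m)$ denote the chord from $(\alpha,\mathcal X(t,\alpha))$ to $(\beta,\mathcal X(t,\beta))$, convexity gives $\mathcal X(t,m)\leq L(t,m)$ on $[\alpha,\beta]$, with equality at $t=t_k$. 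Therefore the left time-derivative at $t_k$ is nonnegative, which unwinds to
\[
\int_\alpha^m \bigl(V(t_k-,s)-\bar v\bigr)\,\d s \geq 0 \quad\text{for all } m\in[\alpha,\beta],
\]
where $\bar v$ is the mean of $V(t_k-)$ on $(\alpha,\beta)$. By \eqref{E:CONSTANT}, $V(t_k+)=\PROJ{\H_{X(t_k)}}(V(t_k-))=\bar v$ on $(\alpha,\beta)$, and by Lemma~\ref{L:NORMAL} the nonnegativity of the primitive is exactly $V(t_k-)-V(t_k+)\in N_{X(t_k)}\K$.

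With the jump identity in hand I would introduce $\Xi(t):=\bar V+\int_0^t Z(s)\,\d s - V(t)$. On each $(t_{k-1},t_k)$ we have $\dot V=Z$, so $\Xi$ is constant; across $t_k$ it jumps by $V(t_k-)-V(t_k+)\in N_{X(t_k)}\K$. The algorithm guarantees $\Omega_{X(s)}\subset\Omega_{X(t)}$ for $s\leq t$, so by \eqref{E:MONO} the inclusion $N_{X(t_k)}\K\subset N_{X(t)}\K$ holds for all $t\geq t_k$. Starting from $\Xi(0)=0$ (since $\bar V\in\H_{\bar X}$, so $V(0)=\bar V$), the accumulated jumps give $\Xi(t)\in N_{X(t)}\K=\partial I_\K(X(t))$ for all $t>0$, which is the differential inclusion \eqref{eq:28}. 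The same argument applied on $[t_1,t]$ with $V(t_1)$ in place of $\bar V$ yields the semigroup inclusion \eqref{E:FODI2}. Finally, integrating \eqref{E:FODI2} over $[t_1,t_2]$, using $\int_{t_1}^{t_2}V(t)\,\d t=X(t_2)-X(t_1)$ (valid since $X$ is absolutely continuous even though $V$ jumps), the monotonicity $\partial I_\K(X(t))\subset\partial I_\K(X(t_2))$ for $t\leq t_2$ together with Remark~\ref{R:MINKOWSKI}, and the equivalence \eqref{E:EQUIVA3}, delivers the projection formula \eqref{E:PROJREP2}.

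For part (2), the sticking hypothesis asserts $F[X(s)]-Z(s)\in\partial I_\K(Y)$ for every $Y\in\H_{X(s)}\cap\K$. Since $\Omega_{X(s)}\subset\Omega_{X(t)}$ for $s\leq t$ forces $X(t)\in\H_{X(t)}\subset\H_{X(s)}$ (using \eqref{E:INCLUSION-SPACE} and $X(t)\in\H_{X(t)}$ by definition of $\H_{X(t)}$), I may take $Y=X(t)$ and conclude that $F[X(s)]-Z(s)\in\partial I_\K(X(t))$ for all $s\leq t$. Integrating and applying Remark~\ref{R:MINKOWSKI} to the convex cone $\partial I_\K(X(t))$ yields
\[
\int_0^t \bigl(F[X(s)]-Z(s)\bigr)\,\d s \in \partial I_\K(X(t)).
\]
Adding this to the inclusion of the previous paragraph gives $\bar V+\int_0^t F[X(s)]\,\d s - V(t)\in\partial I_\K(X(t))$, which is precisely \eqref{E:FODI}, so $X$ is a Lagrangian solution in the sense of Definition~\ref{D:STRONG}. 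Monotonicity \eqref{E:MODI1} is built into the algorithm, so $(X,V)$ is sticky by Definition~\ref{def:sticky}. The main obstacle throughout is the jump identity of the second paragraph; once it is established, everything else reduces to bookkeeping with the monotonicity \eqref{E:MONO} and the convex-cone closure of the normal cone along the evolution.
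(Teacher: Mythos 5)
Your proof is correct and its overall architecture coincides with the paper's: the same choice $Z(t)=\PROJ{\H_{X(t)}}(F[X(t)])$, the reduction of everything to the jump identity $V(t_k-)-V(t_k+)\in N_{X(t_k)}\K$ followed by accumulation of the jumps using the monotonicity \eqref{E:MONO}, the projection formula via \eqref{E:EQUIVA3} and Remark~\ref{R:MINKOWSKI}, and, for the sticking case, exactly the paper's argument producing \eqref{E:HEL} and adding it to the generalized inclusion. The one genuine divergence is how you establish the jump identity. The paper writes $V(t_k-)-V(t_k+)$ as the limit of the difference quotients $h^{-1}\bigl(\PROJ{\H_{X(t_k)}}(X(t_k-h))-X(t_k-h)\bigr)$, notes that each term lies in $N_{X(t_k)}\K$ by Lemma~\ref{L:MINIMAL} (since $\K\subset T_{X(t_k)}\K$ by Lemma~\ref{L:TANGENT}), and concludes by weak closedness of the cone; you instead prove the inclusion directly by comparing the primitive $\mathcal X(t,\cdot)$ with its chord on each maximal interval of $\Omega_{X(t_k)}$, differentiating the resulting one-sided inequality in time from the left, and invoking Lemma~\ref{L:NORMAL}. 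Both routes are sound; the paper's is shorter because it leans on two lemmas already in place, while yours is more self-contained (it bypasses Lemma~\ref{L:MINIMAL} entirely) and makes the geometric mechanism of the inelastic collision visible — it is essentially the primitive-function computation that underlies Lemma~\ref{L:MINIMAL} in \cite{NatileSavare}, carried out in situ. Two cosmetic points: the chord argument should be stated for \emph{every} maximal interval of $\Omega_{X(t_k)}$, not only the newly merged ones (on the old ones the jump is zero, so nothing is lost); and the domination $Z(t)\prec F[X(t)]$ is really Jensen's inequality on each maximal interval rather than a literal application of Lemma~\ref{L:CONTRACTION2} as stated, though this is exactly how the paper uses that lemma as well.
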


\begin{proof} 
Let us first prove that the map $t\mapsto X(t)$ is a generalized
Lagrangian solution with respect to the choice
$$
  Z(t) := \PROJ{\H_{X(t)}}(F[X(t)])
  \quad\text{for all $t\GS 0$.}
$$
The fact that $V$ is the right-derivative of $X$ follows immediately from the
construction. To prove \eqref{E:FODI2} it is not restrictive to
assume $t_1=0$. We argue by induction on the collision times. In the first
interval $[t_0, t_1)$ inclusion \eqref{E:FODI2} is satisfied by taking the null
selection in the subdifferential $\partial I_\K(X(t))$. 

Assume now that \eqref{E:FODI2} is satisfied in $[t_{k-1},t_k)$ for some $k$.
Then
\begin{align}
  \dot{X}(t) 
    &= V(t_k+) + \int_{t_k}^t \PROJ{\H_{X(s)}}(F[X(s)]) \,\d s
\nonumber\\
    &= \Big( V(t_k+)-V(t_k-) \Big) + V(t_k-)
      + \int_{t_k}^t \PROJ{\H_{X(s)}}(F[X(s)]) \,\d s
\label{E:EINS}
\end{align}
for any $t\in[t_k,t_{k+1})$, by \eqref{E:ODE}. By induction assumption, we have
that
\begin{equation}
  V(t_k-) + \xi = \bar{V} + \int_0^{t_k} \PROJ{\H_{X(s)}}(F[X(s)]) \,\d s
\label{E:ZWEI}
\end{equation}
for some $\xi\in\partial I_\K(X(t_k))$. Combining \eqref{E:EINS} and
\eqref{E:ZWEI}, we obtain
$$
  \dot{X}(t) + \xi + \Big( V(t_k-)-V(t_k+) \Big)
    = \bar{V} + \int_0^t \PROJ{\H_{X(s)}}(F[X(s)]) \,\d s
$$
Because of \eqref{E:ODE}, we have that
$$
  V(t_k-) = \lim_{h\rightarrow 0+} h^{-1} \Big( X(t_k)-X(t_k-h) \Big).
$$
Using \eqref{E:OPK}, we then obtain
\begin{align*}
  & V(t_k-)-V(t_k+)
\\
  &\quad\vphantom{\Big(}
    = V(t_k-) - \PROJ{\H_{X(t_k)}}(V(t_k-))
\\
  &\quad
    = \lim_{h\rightarrow 0+} h^{-1} \Big( X(t_k)-X(t_k-h) 
      - \PROJ{\H_{X(t_k)}}\big( X(t_k)-X(t_k-h) \big) \Big)
\\
  &\quad
    = \lim_{h\rightarrow 0+} h^{-1} \Big( 
      \PROJ{\H_{X(t_k)}}\big( X(t_k-h) \big) - X(t_k-h) \Big).
\end{align*}
We now use Lemmas~\ref{L:TANGENT} and \ref{L:MINIMAL} and conclude that
$V(t_k-)-V(t_k+) \in \partial I_\K(X(t_k))$, noticing that $N_X\K =
\partial I_\K(X)$ for all $X\in\K$. Property \eqref{E:MODI1} implies the
monotonicity of the subdifferentials, which are closed convex cones. This yields
$$
  \xi + \Big( V(t_k-)-V(t_k+) \Big) \in \partial I_\K(X(t))
$$
for all $t\in[t_k,t_{k+1})$. Identities~\eqref{E:PROJREP2} and
\eqref{E:PROJREP4} can be proved as in Proposition~\ref{P:PROJECTION}. We conclude
that $X$ is a generalized Lagrangian solution.

It remains to show that if $F$ is sticking, then \eqref{E:FODI} holds. 
Because of \eqref{E:MODI1}, we have that $X(t) \in \H_{X(s)}$ for all $s\LS
t$. Then Definition~\ref{D:NONSPLITTING} yields
\begin{equation}
  \int_{0}^{t} \Big( F[X(s)] - \PROJ{\H_{X(s)}}(F[X(s)]) \Big) \,\d s
    \in \partial I_\K(X(t)).
\label{E:HEL}
\end{equation}
Adding \eqref{E:HEL} to either side of 
$$
  V(t) + \partial I_\K(X(t)) \ni \bar V
  + \int_{0}^{t} \PROJ{\H_{X(s)}}(F[X(s)]) \,\d s,
$$
we obtain \eqref{E:FODI}. Therefore $X$ is a sticky Lagrangian solution.
\end{proof}

We already know that any (even generalized) Lagrangian solution induces a
solution of the conservation law \eqref{E:CONS2}. Since for each time $t\GS 0$
the transport map $X(t,\cdot)$ is piecewise constant, it is easy to check that
the corresponding solution is in fact a discrete particle solution: the
density/momentum is of the form \eqref{E:PART}.

\begin{remark}
  \label{R:EXGENERALIZED}
  Notice that piecewise constant functions as in \eqref{E:DEFXV} are
  dense in $\L^\p(\Omega)$, so we can approximate any given initial
  data and then combine the existence result \ref{T:EXDISCRETE} with
  the stability Theorem~\ref{T:STABCOMP} to
  get the proof of Theorem \ref{T:EXLAGR}.
\end{remark}
\begin{remark}\label{R:MONO} 
The proof of Theorem~\ref{T:MAINSTICK} follows by a similar
approximation argument. 
By Theorem
\ref{P:SEMIGROUP}
it is sufficient to show that any Lagrangian solution $X$ with $\bar X\in
\K$ and
$\bar V\in \H_{\bar X}$ satisfies
$$
  \Omega_{\bar{X}} \subset \Omega_{X(t)}
  \quad\text{for all $t\GS 0$.}
$$
That is, if $\bar{X}$ is constant on some interval $(\alpha,\beta) \subset
\Omega$, then $X(t)$ remains constant on $(\alpha,\beta)$ for all times $t\GS
0$. We approximate $(\bar{X},\bar{V})$ by a sequence $(\bar{X}_n,\bar{V}_n)$
of the form \eqref{E:DEFXV} such that $\bar{X}_n$ is constant on
$(\alpha,\beta)$. Since this property is preserved by the discrete Lagrangian
solution constructed in Theorem~\ref{T:EXDISCRETE}, 
the stability estimates of Theorem \ref{T:EUS} show that the limit
function $X(t)$ is still constant on $(\alpha,\beta)$. 
\end{remark}


\section{Global Existence  in Eulerian coordinates }\label{S:GE}

Theorems~\ref{T:EUS}, \ref{T:MAINSTICK}, \ref{T:SCHAUDER}, and
\ref{T:EXLAGR} of the previous sections 

immediately translate into 
global existence results for the  Euler system of  
conservation laws \eqref{E:CONS2}. Before
stating  some of 
the related results, let us explore in more detail the relation between the
force functionals $f[\RHO]$ in \eqref{E:CONS2} and their reformulation in the
Lagrangian framework. 
\subsection{The Eulerian description of the force field}
\label{S:EDFF}
Let
us first introduce the space 
$$
  \T_\p(\R) := \Big\{ (\RHO,v) \colon \RHO\in\SP_\p(\R), v\in\L^\p(\R,\RHO) \Big\}.
$$
For all $(\RHO_i,v_i)\in\T_\p(\R)$ with $i=1\ldots 2$, we then define 
\cite[\S 2]{NatileSavare}
$$
  D_\p\Big( (\RHO_1,v_1),(\RHO_2,v_2) \Big)
    := \max\Big\{ W_\p(\RHO_1,\RHO_2), 
      U_\p\Big( (\RHO_1,v_1),(\RHO_2,v_2) \Big) \Big\},
$$
where $W_\p$ is the Wasserstein distance and $U_\p$ denotes the semi-distance
\begin{align}
  U_\p^\p\Big( (\RHO_1,v_1), (\RHO_2,v_2) \Big)
    &:= \int_{\R\times\R} |v_1(x)-v_2(y)|^\p \RRHO(dx,dy)
\nonumber\\
    &\hphantom{:}=
      \int_\Omega |v_1(X_{\RHO_1}(m))-v_2(X_{\RHO_2}(m))|^\p \,\d m.
\label{E:SEMID}
\end{align}
Here $\RRHO \in \Gamma_\OPT(\RHO_1,\RHO_2)$ is the unique optimal transport
map between the measures $\RHO_1$ and $\RHO_2$. It can be expressed in terms of
the transport maps defined in \eqref{E:90}; see \eqref{E:146}. The sequence
$\{(\RHO_n,v_n)\}$ converges to $(\RHO,v)$ in the metric space $(\T_\p(\R),D_\p)$
if and only if $W_\p(\RHO_n,\RHO) \longrightarrow 0$, if $\RHO_n v_n \WEAK \RHO
v$ weak* in $\M(\R)$, and if 
$$
  \int_\R |v_n|^\p \,\RHO_n \longrightarrow \int_\R |v|^\p \,\RHO.
$$
We refer the reader to \cite[Prop.~2.1]{NatileSavare} and to \cite{AmbrosioGigliSavare} for further details (see in
particular Definition~5.4.3).

We consider a continuous map
(with respect to the Wasserstein topology in $\SP_2(\R)$ and 
the weak$^*$ topology on $\M(\R)$ induced by $\C_b(\R)$)
\begin{equation}
  f \colon \SP_\p(\R) \longrightarrow \M(\R),\quad 
  f[\RHO]=f_\RHO\,\RHO,\quad
  f_\RHO\in \L^2(\R,\RHO),
  \label{eq:72}
\end{equation}
with the property that $f[\RHO]$ is absolutely continuous with respect to $\RHO
\in\SP_\p(\R)$: $f_\RHO$ is the Radon-Nikodym-derivative of
$f[\RHO]$ with respect to $\RHO$ and assume that $f_\RHO \in
\L^\p(\R,\RHO)$.

\begin{definition}[Boundedness]\label{D:FBD}
We say that a map $f \colon \SP_\p(\R) \longrightarrow \M(\R)$ as in \eqref{eq:72}
is bounded if there exists a
constant $C \GS 0$ such that
$$
  \|f_\RHO\|_{\L^\p(\R,\RHO)}^2 \LS C\Big( 1 + \int_\R |x|^2\,\d\RHO
\Big)
  \quad\text{for all $\RHO\in\SP_\p(\R)$.}
$$
We say that $f$ is pointwise linearly bounded if there exists a $C_\sfp \GS
0$ such that
$$
  |f_\RHO(x)| \LS C_\sfp \Big(1 + |x|+\int_\R |x|\,\d\RHO\Big)
  \quad\text{for a.e.\ $x\in\R$ and all $\RHO\in\SP_\p(\R)$.}
$$
\end{definition}

\begin{definition}[Uniform continuity I]\label{D:FCO}
We say that a map $f \colon \SP_\p(\R) \longrightarrow \M(\R)$ as in
\eqref{eq:72} 
is 
uniformly continuous if  there exists a modulus of
continuity $\omega$ such that 
\begin{equation}
  U_\p\Big( (\RHO_1,f_{\RHO_1}), (\RHO_2,f_{\RHO_2}) \Big)
    \LS \omega\Big( W_\p(\RHO_1,\RHO_2) \Big)
  \quad\text{for all $\RHO_1,\RHO_2\in\SP_\p(\R)$.}
\label{E:UNFO}
\end{equation}
In the case $\omega(r) = Lr$ for some constant $L\GS 0$ and all $r\GS 0$,
we say that $f$ is Lipschitz continuous.
\end{definition}
\EEE
As discussed in Section~\ref{SS:OT}, there is a one-to-one correspondence
between measures $\RHO\in\SP_\p(\R)$ and optimal transport maps 
$X\in\L^\p(\Omega)$, given by
\begin{equation}
  X\in\K
  \quad\text{and}\quad
  X_\#\leb = \RHO.
\label{E:RELATION}
\end{equation}
We now want to construct a functional $F\colon \K\longrightarrow \L^\p(\Omega)$
such that
\begin{equation}
  \int_\R \varphi(x) \, f[\RHO](\d x) 
    = \int_\Omega \varphi(X(m)) F[X](m) \,\d m
  \quad\text{for all $\varphi\in\C_b(\R)$,}
\label{E:CONN}
\end{equation}
whenever $(X,\RHO)$ are related by \eqref{E:RELATION}. One possible choice is 
to set 
\begin{equation}
  \label{eq:73}
  F[X] := f_\RHO\circ X\quad\text{for all $(X,\RHO)$ satisfying \eqref{E:RELATION},}
\end{equation}
\EEE
which easily gives $F[X]\in\L^\p(\Omega)$. Then the boundedness and continuity
assumptions on the functional $f$ in Definitions~\ref{D:FBD} and \ref{D:FCO}
translate immediately into the corresponding properties for $F$ in
Definitions~\ref{D:BOUNDEDNESS} and \ref{D:CONTINUITY}. It can be useful,
however, to also consider different choices for $F$.

\begin{definition}[ Uniform  continuity II]
We say that a map $f \colon \SP_\p(\R) \longrightarrow \M(\R)$ as in
\eqref{eq:72} is \emph{densely} uniformly
continuous if \eqref{E:UNFO} holds for
measures that are absolutely continuous with respect to $\LEB^1$ with bounded
densities. We define dense Lipschitz continuity similarly.
\end{definition}

\begin{lemma}\label{L:EXTEN}
If 
$f\colon\SP_\p(\R)\longrightarrow\M(\R)$ is densely uniformly
continuous,
then there exists a unique
uniformly continuous map $F\colon\K\longrightarrow\L^\p(\Omega)$ such that
\eqref{E:CONN} holds for all $(X,\RHO)$ satisfying \eqref{E:RELATION}. 
\end{lemma}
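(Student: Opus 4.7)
The plan is to define $F$ first on the dense subclass $\K_0 \subset \K$ consisting of those $X \in \K$ for which $\RHO = X_\#\leb$ is absolutely continuous with a bounded density, and then to extend $F$ by uniform continuity to all of $\K$.

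First, for $X \in \K_0$ I set $F[X] := f_\RHO \circ X$. The change-of-variables formula \eqref{EQ:6} shows that this $F[X]$ lies in $\L^\p(\Omega)$ and satisfies \eqref{E:CONN} for every $\varphi \in \C_b(\R)$. The key observation is then that, for $X_1, X_2 \in \K_0$ with $\RHO_i := (X_i)_\#\leb$, the one-dimensional representation \eqref{E:SEMID} of the $U_\p$ semi-distance in terms of monotone rearrangements yields
\[
  \|F[X_1]-F[X_2]\|_{\L^\p(\Omega)} = U_\p\big((\RHO_1,f_{\RHO_1}),(\RHO_2,f_{\RHO_2})\big),
\]
while the $\L^\p$-Wasserstein isometry \eqref{E:W2} gives $\|X_1-X_2\|_{\L^\p(\Omega)} = W_\p(\RHO_1,\RHO_2)$. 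Combining these two identities with the dense uniform continuity of $f$ delivers the desired uniform continuity estimate \eqref{E:UNFO1} on $\K_0$.

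To extend $F$ to all of $\K$, I rely on the density of $\K_0$ in $(\K,\|\cdot\|_{\L^\p(\Omega)})$, which by the Wasserstein isometry corresponds to the density of absolutely continuous measures with bounded densities in $(\SP_\p(\R),W_\p)$; this can be obtained by convolving any $\RHO \in \SP_\p(\R)$ with a smooth mollifier $\eta_\eps$ supported in $(-\eps,\eps)$, for which $W_\p(\RHO * \eta_\eps, \RHO) \LS \eps$ and $\RHO * \eta_\eps$ is absolutely continuous with density bounded by $\|\eta_\eps\|_\infty$. Since $F|_{\K_0}$ is uniformly continuous into the complete space $\L^\p(\Omega)$, a Cauchy-sequence argument yields a unique uniformly continuous extension $F : \K \to \L^\p(\Omega)$ with the same modulus.

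The only delicate step is to verify that \eqref{E:CONN} survives the extension: given $X \in \K$ I approximate it by a sequence $X_n \in \K_0$ with $X_n \to X$ in $\L^\p(\Omega)$, set $\RHO_n := (X_n)_\#\leb$ and $\RHO := X_\#\leb$, so that $W_\p(\RHO_n,\RHO) \to 0$. The continuity hypothesis on $f$ (from the Wasserstein topology to the weak-$^*$ topology) yields $f[\RHO_n] \WEAK f[\RHO]$ in $\M(\R)$, which allows passage to the limit on the left-hand side of \eqref{E:CONN} against any $\varphi \in \C_b(\R)$; on the right-hand side, $F[X_n] \to F[X]$ in $\L^\p(\Omega)$ by construction and $\varphi \circ X_n \to \varphi \circ X$ boundedly (and a.e.\ up to a subsequence), so the product converges. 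Uniqueness of $F$ is automatic, since \eqref{E:CONN} with $\varphi$ ranging over $\C_b(\R)$ determines $F[X]$ whenever $\Omega_X = \varnothing$, in particular on $\K_0$, and continuous extensions from a dense subset are unique. The main obstacle I expect is precisely this last compatibility between the $\L^\p$-limit for $F[X_n]$ and the merely weak-$^*$ limit for $f[\RHO_n]$, but the change-of-variables structure of \eqref{E:CONN} makes the pairing jointly continuous under these two modes of convergence.
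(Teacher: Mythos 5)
Your proof is correct and follows essentially the same route as the paper's: define $F[X]:=f_\RHO\circ X$ on the dense class of maps whose push-forwards are absolutely continuous with bounded density, read off the uniform continuity estimate from the identities $\|X_1-X_2\|_{\L^\p(\Omega)}=W_\p(\RHO_1,\RHO_2)$ and $\|F[X_1]-F[X_2]\|_{\L^\p(\Omega)}=U_\p$, and extend by density. The extra details you supply (density via mollification, passage to the limit in \eqref{E:CONN}, and uniqueness via $\Omega_X=\varnothing$ on the dense class) are all sound and merely flesh out what the paper leaves as ``one can check.''
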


Note that \eqref{E:CONN} imlies that $f_\RHO\circ X = \PROJ{\H_X}(F[X])$ for
all $(X,\RHO)$ with \eqref{E:RELATION}.

\begin{proof}
We denote by $\K_\REG$ the dense subset of $\K$ whose elements are
$\C^1(\bar{\Omega})$-maps with strictly (hence uniformly) positive
derivatives. For every $X\in\K_\REG$ the push-forward $\RHO :=
X_\#\leb$ is absolutely continuous with respect to the Lebesgue
measure $\LEB^1$ and has a bounded density. We can then define
\begin{equation}
  F[X] := f_\RHO\circ X
  \quad\text{for all $X\in\K_\REG$.}
\label{E:2}
\end{equation}
Applying definition \eqref{E:SEMID} and \eqref{E:UNFO} we obtain
$$
  \|F[X_1]-F[X_2]\|_{\L^\p(\Omega)} 
    \LS \omega\Big( \|X_1-X_2\|_{\L^\p(\Omega)} \Big)
  \quad\text{for all $X_1,X_2\in\K_\REG$.}
$$
Then $F$ can be extended to all of $\K$ by density. One can check that
this functional satisfies \eqref{E:2}, therefore it is uniquely determined
by $f$.  
\end{proof}

\begin{definition}[Sticking]\label{D:NSP}
Let $f \colon \SP_\p(\R) \longrightarrow \M(\R)$ be 
densely uniformly continuous and let $F$ be
the functional from Lemma~\ref{L:EXTEN}. We say that $f$ is sticking
if $F$ is sticking.
\end{definition}

\subsection{Existence results and examples}
\label{S:ER} 
We state here a simple example of possible applications of the previous
Lagrangian results; for the sake of simplicity, we omit
to detail all the information which could be derived
by the finer structure properties and by the a priori estimates 
we obtained for the Lagrangian formulation.
It is worth noticing that all the solutions can be obtained as a
suitable limit of discrete particle evolutions.

The first statement follows by Theorem \ref{T:EXLAGR}, the second one
by Theorem \ref{T:EUS}, Theorems \ref{P:SEMIGROUP} and
\ref{T:MAINSTICK}
yields the last assertion.
\EEE
\begin{theorem}[Global Existence]\label{T:GLOBEX}
Let us fix $
  \bar{\RHO} \in \SP_\p(\R)
  \quad\text{and}\quad
  \bar{v} \in \L^\p(\R,\bar{\RHO}).
$.
\begin{enumerate}
\item Suppose that the force functional $f\colon \SP_\p(\R) \longrightarrow
\M(\R)$ is pointwise linearly bounded and densely uniformly continuous. Then
there exists a solution $(\RHO,v)$ of the conservation law
\eqref{E:CONS2} with initial data $(\bar{\RHO},\bar{v})$. 
\item If $f\colon \SP_\p(\R) \longrightarrow \M(\R)$ is densely
Lipschitz continuous, then there exists a stable selection of a 
solution $(\RHO,v)$ of \eqref{E:CONS2} with
respect to the initial data $(\bar{\RHO},\bar{v})$ in $(\T_\p(\R),D_\p)$.
\item If $f\colon \SP_\p(\R) \longrightarrow \M(\R)$ is densely
Lipschitz continuous, and sticking, then there exists a stable
sticky solution $(\RHO,v)$ of \eqref{E:CONS2} with initial data
$(\bar{\RHO},\bar{v})$.
The map $S_t:(\bar \RHO,\bar v)\mapsto \RHO(t,\cdot),v(t,\cdot))$ is a semigroup in $(\T_\p(\R),D_\p)$.
\end{enumerate}
\end{theorem}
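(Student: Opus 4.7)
The plan is to reduce the Eulerian statement to the Lagrangian results already established, by exploiting the correspondence $\RHO \leftrightarrow X_\RHO$ and $v \leftrightarrow V := v\circ X_\RHO$ from Section \ref{SS:OT}. First, given the initial data $(\bar{\RHO},\bar{v}) \in \T_\p(\R)$, I would define $\bar{X} := X_{\bar\RHO} \in \K$ and $\bar{V} := \bar{v}\circ \bar{X} \in \L^\p(\Omega)$; by construction $\bar{V} \in \H_{\bar{X}}$, which is exactly the compatibility condition required by Theorems \ref{T:EXLAGR} and \ref{T:MAINSTICK}. Next, I would invoke Lemma \ref{L:EXTEN} to construct the unique uniformly continuous operator $F\colon \K \to \L^\p(\Omega)$ associated to $f$ via \eqref{E:CONN}, and verify that $F$ inherits from $f$ the structural properties assumed in each case: pointwise linear boundedness (from Definition \ref{D:FBD}), Lipschitz continuity, and the sticking condition (which is the definition transported via Definition \ref{D:NSP}).

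For part (1), I would apply Theorem \ref{T:EXLAGR} to obtain a generalized Lagrangian solution $X \in \LIP_{\rm loc}([0,\infty);\K)$ of \eqref{E:FODI} starting at $(\bar{X},\bar{V})$. Theorem \ref{T:BASIC} (specifically the final item on solutions to \eqref{E:CONS2}) then produces a pair $(\RHO,v)$, defined by $\RHO(t,\cdot) := X(t,\cdot)_\#\leb$ and $v(t,x)$ via the relation $V^\ast(t) = v(t,\cdot)\circ X(t,\cdot)$ with $V^\ast := \PROJ{\H_{X(t)}}V$, which is a distributional solution of \eqref{E:CONS2} and satisfies the initial conditions \eqref{eq:64}. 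The same translation works verbatim for generalized Lagrangian solutions, since the proof of Theorem \ref{T:BASIC} only uses that $V^\ast \in \H_{X(t)}$ and that \eqref{eq:39} determines the force up to $\H_{X(t)}^\perp$; this matches the compatibility in \eqref{E:DOMINO}.

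For parts (2) and (3), the densely Lipschitz assumption gives a Lipschitz $F$ via Lemma \ref{L:EXTEN}, so Theorem \ref{T:EUS} yields a unique Lagrangian solution $X$ depending continuously on $(\bar{X},\bar{V})$ via \eqref{E:XCONTR}--\eqref{E:VCONTR}. To turn this into stability in $(\T_\p(\R),D_\p)$, I would note that $D_\p$-convergence of initial data is equivalent to $\L^\p(\Omega)$-convergence of $(\bar{X}_n,\bar{V}_n)$ to $(\bar{X},\bar{V})$ (by the definition \eqref{E:SEMID} and \eqref{E:W2}), and then observe that the $\L^\p$-convergence of $X_n(t)$ and $V_n(t)$ provided by \eqref{E:XCONTR}--\eqref{E:VCONTR} precisely transcribes into $D_\p$-convergence of $(\RHO(t,\cdot),v(t,\cdot))$, using \eqref{E:W2} for the Wasserstein component and \eqref{E:SEMID} for the velocity semidistance. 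For (3), the sticking property of $F$ plus Theorem \ref{T:MAINSTICK} ensure that the (unique) Lagrangian solution is sticky; Proposition \ref{P:SEMIGROUP} then gives the Lagrangian semigroup $X(\tau+t) = S^L_t(X(\tau),V(\tau))$ on the invariant subset $\{(X,V):V\in\H_X\}$. The map $(\RHO,v) \mapsto (X_\RHO, v\circ X_\RHO)$ is a bijective isometry from $(\T_\p(\R),D_\p)$ onto that subset, so conjugating $S^L_t$ through it yields the semigroup $S_t$ on $(\T_\p(\R),D_\p)$.

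The main delicate point is the equivalence between $D_\p$-convergence on the Eulerian side and $\L^\p(\Omega)$-convergence of the pairs $(X,V)$ on the Lagrangian side: the Wasserstein part is standard by \eqref{E:W2}, but I must be careful with the velocity, since the identification $v \mapsto V = v\circ X_\RHO$ is only well-defined up to $\H_{X_\RHO}^\perp$. Restricting to representatives $V \in \H_{X_\RHO}$ (which is automatic once $\bar{v} \in \L^\p(\R,\bar\RHO)$), the map becomes an isometry in the sense that $U_\p((\RHO_1,v_1),(\RHO_2,v_2)) = \|V_1 - V_2\|_{\L^\p(\Omega)}$ by \eqref{E:SEMID}. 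Once this identification is in place, the three assertions follow immediately by pulling back Theorems \ref{T:EXLAGR}, \ref{T:EUS}, \ref{P:SEMIGROUP}, and \ref{T:MAINSTICK} through Theorem \ref{T:BASIC}.
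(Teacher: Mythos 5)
Your proposal is correct and follows essentially the same route as the paper, which proves the theorem by exactly this reduction: identifying $(\bar\RHO,\bar v)$ with $(\bar X,\bar V)=(X_{\bar\RHO},\bar v\circ X_{\bar\RHO})$, transporting the hypotheses on $f$ to $F$ via Lemma~\ref{L:EXTEN} and Definitions~\ref{D:FBD}--\ref{D:NSP}, and then invoking Theorem~\ref{T:EXLAGR} for part (1), Theorem~\ref{T:EUS} for part (2), and Theorems~\ref{P:SEMIGROUP} and~\ref{T:MAINSTICK} for part (3), with Theorem~\ref{T:BASIC} (extended to generalized solutions via the remark after Definition~\ref{D:GENERALIZED}) providing the passage back to Eulerian solutions. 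Your additional care about the isometry $U_\p((\RHO_1,v_1),(\RHO_2,v_2))=\|V_1-V_2\|_{\L^\p(\Omega)}$ for representatives $V_i\in\H_{X_{\RHO_i}}$ is exactly the identification the paper relies on in Section~\ref{S:EDFF}.
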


We finish the paper by giving a number of examples of force functionals.

\begin{example}
Let $v\colon\R\longrightarrow\R$ be a continuous function satisfying
\begin{equation}
  |v(x)| \LS C_v(1+|x|)
  \quad\text{for all $x\in\R$,}
\label{E:24}
\end{equation}
with $C_v\GS 0$ some constant. Then the operator defined by
$$
  f[\RHO] := \RHO v
  \quad\text{for all $\RHO\in\SP_\p(\R)$}
$$
is pointwise linearly  bounded; it is Lipschitz continuous if $v$ is a
Lipschitz function. Note that $f[\RHO]$ is the Wasserstein differential of the
potential energy (see \cite{AmbrosioGigliSavare})
$$
  \mathscr{V}[\RHO]:=\int_\R V(x) \,\RHO(\d x)
  \quad\text{where $v=V'$.}
$$
\end{example}

\begin{example}
Let $w\colon\R\longrightarrow\R$ be a continuous function satisfying
\eqref{E:24}. Then 
$$
  f[\RHO] := \RHO (w\star\RHO)
    = \RHO \bigg( \int_\R w(\cdot-y) \,\RHO(\d y) \bigg)
  \quad\text{for all $\RHO\in\SP_\p(\R)$}
$$
is pointwise linearly bounded, since 
\begin{align*}
  \big|(w\star \RHO)(x)\big|\LS C_w\int_\R (1+|x-y|)\,\RHO(\d y)\LS 
  C_w\Big(1+|x|+\int_\R |y|\,\RHO(\d y)\Big)
\end{align*} 
It is Lipschitz continuous if $w$ is a
Lipschitz function. In fact, writing $f_\RHO := w\star\RHO$ for all
$\RHO\in\SP_\p(\R)$, we have that
\begin{align*}
  |f_{\RHO_1}(x)-f_{\RHO_2}(y)|
    &= \bigg| \int_\R w(x-x') \,\RHO_1(\d x') - \int_\R w(y-y') \,\RHO_2(\d y')
\bigg|
\\
    &= \bigg| \int_{\R\times\R} \Big( w(x-x') - w(y-y') \Big) 
      \,\RRHO(\d x',\d y') \bigg|
\\
    &\LS L \bigg( |x-y| + \int_{\R\times\R} |x'-y'| \,\RRHO(\d x',\d y') \bigg),
\end{align*}
where $L\GS 0$ is the Lipschitz constant of $w$ and $\RRHO\in
\Gamma_\OPT(\RHO_1,\RHO_2)$. This implies
$$
  U_\p\Big( (\RHO_1,f_{\RHO_1}),(\RHO_2,f_{\RHO_2}) \Big)
    \LS 4 L W_\p(\RHO_1,\RHO_2)
  \quad\text{for all $\RHO_1,\RHO_2\in\SP_\p(\R)$.}
$$
Note that $f[\RHO]$ is the Wasserstein differential of the interaction energy
(see \cite{AmbrosioGigliSavare})
$$
  \mathscr{W}[\RHO] = \int_{\R\times\R} W(x-y) \,\RHO(\d x) \,\RHO(\d y)
  \quad\text{where $w=W'$.}
$$
\end{example}

\begin{example}
  \label{ex:EP1}
Let us consider the previous example with the Borel function
$$
  w(x) := \begin{cases}
    1 & \text{if $x>0$}
\\
    0 & \text{if $x=0$}
\\
   -1 & \text{if $x<0$}
  \end{cases},
$$
which corresponds to $W(x) := |x|$. To show that $f[\RHO]$ is continuous,
note that
$$
  f_\RHO(x) 
    = m_\RHO(x) + M_\RHO(x) - 1
  \quad\text{for all $x\in\R$,}
$$
where $m_\RHO(x) := \RHO\big( (-\infty,x) \big)$ and
$M_\RHO(x):=\RHO\big((-\infty,x]\big)$
as in
\eqref{E:53} above. Up to rescaling and adding constants, the function $f_\RHO$
is the precise representative of the cumulative distribution function of the
measure $\RHO$. For convenience, we define 
$$
  \tilde{f}_\RHO(x) := f_\RHO(x)+1
  \quad\text{for all $x\in\R$,}\quad
   \tilde f[\RHO]:=\tilde f_{\RHO}\,\RHO. 
$$
We now introduce the sets
$$
  J_\RHO := \Big\{ x\in\R \colon \RHO\big( \{x\} \big) > 0 \Big\}
  \quad\text{and}\quad
  \J_\RHO := \bigcup_{x\in J_\RHO} \big( m_\RHO(x),M_\RHO(x) \big).
$$
Note that $J_\RHO$ is at most countable. If $X_\RHO$ is defined by
\eqref{E:90}, then
\begin{gather*}
  X_\RHO(m) = x
  \quad\text{for all $m\in [m_\RHO(x),M_\RHO(x)]$ and $x\in\J_\RHO$,}
\\
  \tilde{f}_\RHO(X_\RHO(m)) = 2m 
  \quad\text{for all $m\in\Omega\setminus\J_\RHO$.}
\end{gather*}
For any $\varphi\in\C_b(\R)$ we have
\begin{align*}
  & \int_\Omega \varphi(x)\,\tilde f[\RHO](\d x)\EEE=\int_\Omega \varphi(X_\RHO(m)) \tilde{f}_\RHO(X_\RHO(m)) \,\d m
\\
  &\quad
    = \int_{\Omega\setminus\J_\RHO} \varphi(X_\RHO(m)) 
      \tilde{f}_\RHO(X_\RHO(m)) \,\d m
    + \sum_{x\in J_\RHO} \int_{m_\RHO(x)}^{M_\RHO(x)} \varphi(X_\RHO(m))
      \tilde{f}_\RHO(X_\RHO(m)) \,\d m
\\
  &\quad
    = 2 \int_{\Omega\setminus\J_\RHO} \varphi(X_\RHO(m)) \,m \,\d m
    + \sum_{x\in J_\RHO} \Big( M^2_\RHO(x)-m^2_\RHO(x) \Big) \varphi(x) 
\\
  &\quad
    = 2\int_{\Omega\setminus\J_\RHO} \varphi(X_\RHO(m)) \,m \,\d m
    + 2\sum_{x\in J_\RHO} \int_{m_\RHO(x)}^{M_\RHO(x)} \varphi(x) \,m \,\d m
\\
  &\quad
    = 2\int_\Omega \varphi(X_\RHO(m)) \,m \,\d m.
\end{align*}
It follows that
$$
  \int_\R \varphi(x) f_\RHO(x) \,\RHO(\d x)
    = \int_\Omega \varphi(X_\RHO(m)) \, (2m-1) \,\d m
$$
Then the map $f$ is pointwise linearly bounded because $|f_\RHO(x)|
\LS 1$ for all $x\in\R$. It is continuous since $\RHO_n \longrightarrow \RHO$ in
$\SP_\p(\R)$ implies that $X_{\RHO_n} \longrightarrow X_\RHO$ in $\L^\p(\Omega)$.
It is densely Lipschitz continuous since the associated functional $F$
is given by
\begin{equation}
  F[X](m) := 2m-1
  \quad\text{for all $m\in\Omega$,}
\label{EQ:61}
\end{equation}
which does not even depend on $X\in\K$ anymore.
\end{example}

\begin{example}
For $\sigma\in\L^\infty(\R)$ let $q_\RHO$ be the solution of (recall \eqref{eq:2})
\begin{equation}
   - \partial^2_{xx} q_\RHO=\lambda\big(\RHO-\sigma\big).
\label{E:EP1}
\end{equation}
Then $q_\RHO$ is locally Lipschitz continuous and its (opposite) derivative $a_\RHO :=
 - \partial_x q_\RHO$ is locally of bounded variation. Choosing its precise
representative we then define
\begin{equation}
  f[\RHO] := \RHO a_\RHO
  \quad\text{for all $\RHO\in\SP_\p(\R)$.}
\label{E:EP2}
\end{equation}
Setting $Q_\sigma(x) := \int_0^x \sigma(y) \,\d y$ it is not difficult to check
that
$$
  a_\RHO(x) =  -\lambda \Big( \frac{1}{2} \big( m_\RHO(x)+M_\RHO(x)-1 \big)
    -Q_\sigma(x) \Big)
  \quad\text{for all $x\in\R$,}
$$
so that the associated operator $F$ is given by
$$
  F[X](m) =  -\lambda  \Big( \frac{1}{2}(2m-1)-Q_\sigma(X(m)) \Big)
  \quad\text{for all $m\in\Omega$.}
$$
This corresponds to the Euler-Poisson system discussed in the
Introduction.
For simplicity, let us consider consider the case when $\sigma$
vanishes. 
\medskip
\paragraph{\em Sticky solutions for the attractive Euler-Poisson system}
In the  attractive case (when $\lambda>0$) 
the functional $F[X]$ is
sticking: Let $\Omega_X$ be defined by \eqref{E:OMEX} and let $(\alpha,
\beta) \subset \Omega_X$ be a maximal interval. Then $\PROJ{\H_X}(F[X])$ is
constant in $(\alpha,\beta)$ and equal to its average over the interval. We
define
\begin{equation}
  \Xi(m) := \int_\alpha^m \Big( F[X](m) - \PROJ{\H_X}(F[X])(m) \Big) \,\d m
  \quad\text{for all $m\in (\alpha,\beta)$.}
\label{E:DEFXI}
\end{equation}
Then $\Xi(\alpha) = \Xi(\beta) = 0$. Since  $\lambda>0$, $\Xi$ is concave
and, we obtain that
$\Xi(m) \GS 0$ in $(\alpha,\beta)$. By Lemma~\ref{L:NORMAL}, we conclude that
the functional $F$ is sticking. 
Sticky Lagrangian solutions to the
Euler-Poisson system \eqref{E:CONS2} (thus obtained as limit
of sticky particly dynamics)
are therefore unique and in fact form a semigroup in the metric space
$(\T_\p(\R),D_\p)$ by Theorems \ref{T:EUS}, \ref{P:SEMIGROUP}, and \ref{T:MAINSTICK}.

We can then apply the representation formula \eqref{E:PROJREP01} and
\eqref{eq:60} to
obtain the following result:
\begin{theorem}[Representation formula for 
  attractive Euler-Poisson system]
\label{T:RFAE}
  The unique sticky Lagrangian solution of the Euler-Poisson system
  ($\lambda\GS 0$)
  corresponding to initial data $(\bar \RHO, \bar v)$
  with $\bar \RHO=\bar X_\#\leb$,
  $ \bar X \in \K$, and $\bar V=\bar v\circ \bar X$,
  can be obtained by the formula
  \begin{equation}
    \label{eq:74}
    \RHO(t,\cdot)=X(t,\cdot)_\# \leb,\quad
    X(t,m)=\frac \partial{\partial m}\mathcal X^{**}(t,m)
  \end{equation}
  where $\mathcal X^{**}(t,m)$ is the convex envelope (w.r.t.\ $m$,
  see \eqref{eq:75})
  of 
  \begin{equation}
    \label{eq:76}
    \mathcal X(t,m):=\int_0^m \Big(\bar X(\ell)+t \bar V(\ell)-\lambda
    \frac {t^2} 4\big(2\ell-1\big)\Big)\,\d \ell
  \end{equation}
\end{theorem}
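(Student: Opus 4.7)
The plan is to directly combine the projection formula for sticky Lagrangian solutions with the $X$-independence of the force and the representation of $\PROJ_{\K}$ via convex envelopes.

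First I would invoke the machinery already in place: since $F[X](m)=-\lambda(m-\tfrac12)$ is Lipschitz (in fact, constant in $X$) and sticking in the attractive regime $\lambda\ge 0$ (as established in the example preceding the theorem, via the concavity of $\Xi$ in \eqref{E:DEFXI}), Theorems~\ref{T:EUS}, \ref{P:SEMIGROUP} and \ref{T:MAINSTICK} apply: for initial data $\bar X\in\K$, $\bar V=\bar v\circ\bar X\in\H_{\bar X}$, there is a unique sticky Lagrangian solution $X(t,\cdot)$, which satisfies the projection formula
\begin{equation*}
X(t)=\PROJ{\K}\!\left(\bar X+t\bar V+\int_0^t(t-s)F[X(s)]\,\d s\right)
\end{equation*}
of Proposition~\ref{P:PROJECTION} (equation \eqref{E:PROJREP01}). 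The desired density is then recovered by $\RHO(t,\cdot)=X(t,\cdot)_\#\leb$, which is the first identity of \eqref{eq:74}.

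Next I would use the crucial observation that for the attractive Euler--Poisson system with $\sigma=0$, the map $F[X]=-\lambda A$ with $A(m)=m-\tfrac12$ is \emph{independent of $X$}, as recorded in \eqref{eq:58}. Consequently the double time integral is explicit:
\begin{equation*}
\int_0^t(t-s)F[X(s)](m)\,\d s=-\lambda\,A(m)\int_0^t(t-s)\,\d s=-\frac{\lambda t^2}{4}(2m-1),
\end{equation*}
so that
\begin{equation*}
X(t,m)=\PROJ{\K}\!\left(\bar X(m)+t\bar V(m)-\frac{\lambda t^2}{4}(2m-1)\right).
\end{equation*}

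Finally I would apply the convex-envelope characterization \eqref{eq:60} of the metric projection onto $\K$: for any $Z\in\L^2(\Omega)$, $\PROJ{\K}(Z)(m)=\frac{\d^+}{\d m}\mathcal Z^{**}(m)$ where $\mathcal Z(m):=\int_0^m Z(\ell)\,\d\ell$. Taking $Z=\bar X+t\bar V-\tfrac{\lambda t^2}{4}(2\cdot-1)$, its primitive is precisely $\mathcal X(t,\cdot)$ from \eqref{eq:76}, and the right derivative of its convex envelope in $m$ yields \eqref{eq:74}.

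The argument is essentially a one-line computation once the abstract results are in place, so the only real work is bookkeeping: verifying that the initial datum $\bar V\in\H_{\bar X}$ is in the admissible class for Theorem~\ref{T:MAINSTICK}, and confirming that the primitive of the affine-in-$t^2$ perturbation matches the stated $\mathcal X(t,m)$. The ``hard part'' of the theorem—existence of the sticky selection, the validity of the projection formula, and the convex-envelope representation of $\PROJ{\K}$—has already been done in the preceding sections, so there is no genuine obstacle left at this stage; the representation formula is essentially a corollary of $F$ being constant along sticky evolutions.
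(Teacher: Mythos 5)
Your proposal is correct and follows exactly the paper's own route: the paper derives Theorem~\ref{T:RFAE} by combining the sticking property of $F$ in the attractive regime with the projection formula \eqref{E:PROJREP01}, the $X$-independence of $F[X]=-\lambda A$ (so that $\int_0^t(t-s)F[X(s)]\,\d s=-\tfrac{\lambda t^2}{4}(2m-1)$), and the convex-envelope characterization \eqref{eq:60} of $\PROJ{\K}$. Your bookkeeping of the double integral and of the primitive matches \eqref{eq:76}, so nothing is missing.
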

Notice that when $\lambda=0$ we find the sticky particle solution of 
\cite{NatileSavare}.
\medskip
\paragraph{\em Lagrangian solutions for the repulsive Euler-Poisson system}
In the repulsive case $\lambda<0$
the
function $\Xi$ defined in \eqref{E:DEFXI} is convex and vanishes at the
endpoints of $(\alpha,\beta)$, thus $\Xi(m) \LS 0$ for all
$m\in(\alpha,\beta)$,
and the map $F$ does not satisfies the sticking condition. 
In this case \eqref{eq:74}-\eqref{eq:76} may be different from the solution
given by Theorem~\ref{T:EUS}. 

Here is a simple example for $\lambda=-2$:
consider the initial condition
\begin{equation}
  \label{eq:78}
  \bar X(m):=m-1/2,\quad
  \bar V(m):=-\sign(m-1/2),
\end{equation}
for which \eqref{eq:76} yields
\begin{equation}
  \label{eq:79}
  \mathcal X(t,m)=\frac 12(1+t^2)(m-1/2)^2-t|m-1/2|-c(t),\quad 
  c(t):=\frac 18 (1+t^2-4t).
\end{equation}
It is easy to check that 
\begin{equation}
  \label{eq:80}
  \mathcal X^{**}(t,m)=
  \begin{cases}
    \mathcal X(t,m)&\text{if }|m-1/2|\ge \delta(t),\\
    -\frac{t^2}{2(1+t^2)}-c(t)&\text{if }|m-1/2|\le \delta(t),
  \end{cases}
  \qquad
  \text{where}\quad
  \delta(t):=\frac t{1+t^2},
\end{equation}
so that $X(t,\cdot)$ is the piecewise linear continuous map
\begin{displaymath}
  X(t,m)=
  \begin{cases}
    \bar X(t,m) &\text{if }|m-1/2|\ge \delta(t),\\
    0&\text{if }|m-1/2|\le \delta(t),
  \end{cases}
  \qquad\text{where }
  \bar X(t,m):=\frac{\partial}{\partial m}\mathcal X(t,m).
\end{displaymath}
If we eventually introduce
\begin{displaymath}
  Y(t,m):=\frac\partial{\partial t}\bar X(t,m)=
  \frac{\partial^2}{\partial t\,\partial m} \mathcal X(t,m)=2t(m-1/2)-\sign(m-1/2),
\end{displaymath}
recalling \eqref{eq:26} $X$ is a Lagrangian solution if and only if
\begin{displaymath}
  Y(t,\cdot)-\dot X(t,\cdot)\in \partial
  I_\K(X(t,\cdot))\quad\text{a.e.\ in }(0,\infty).
\end{displaymath}
By Lemma \ref{L:NORMAL} we obtain the equivalent condition
\begin{equation}\label{eq:83}
  \frac{\partial}{\partial t}\big(\mathcal X(t,m)-\mathcal
  X^{**}(t,m)\big)\ge 0\quad\text{in }-\delta(t)< m< \delta(t),
\end{equation}
which is not compatible with \eqref{eq:79} and \eqref{eq:80}: to see
this, 
fix e.g. $0<\delta<1/2$, $m_*:=1/2+ \delta$, and
$t_\pm:=\frac{1\pm\sqrt{1-4\delta^2}}{2\delta}$, so that
$\delta(t_\pm)=\delta<\delta(t)$ for every $t\in (t_-,t_+)$.
We thus have
\begin{displaymath}
  \mathcal X(t_\pm,m_*)-
  \mathcal X^{**}(t_\pm,m_*)=0,\quad
  \mathcal X(t,m_*)-
  \mathcal X^{**}(t,m_*)>0\quad\text{for }t_-<t<t_+,
\end{displaymath}
which contradicts \eqref{eq:83}.  
\end{example}
%
\section{Convergence of the Time Discrete Scheme of Section~\ref{SS:TDS}}
\label{S:CONV}

In this section, we establish the convergence of the time discrete scheme
of Section~\ref{SS:TDS}. Since the proof does not substantially
differ from the one provided in \cite{Brenier2} for order-preserving vibrating
strings, we only sketch the main steps. The key point is the non-expansive
property of the time-discrete scheme. Indeed, we first observe that the
rearrangement operator, even in the periodic case, is non-expansive in
$\L^2(\Omega)$. More precisely, we have that
$$
  \int_0^1 |Y^*(m)-Z^*(m)|^2 \,\d m \LS \int_0^1 |Y(m)-Z(m)|^2 \,\d m
$$
for all pairs $(Y,Z)$ of maps such that $Y-\ID$ and $Z-\ID$ are 1-periodic and
square integrable. Next, we see that the harmonic oscillations \eqref{harmonic}
are isometric in phase space for $(X(t,m)-m,V(t,m))$, for each fixed $m$. Let
$(X_{ \tau, n},V_{ \tau, n})$, $(Y_{\tau
  ,n},W_{ \tau,n})$ be generated by the
time-discrete scheme. Then
\begin{align*}
  & \|X_{\tau,n+1}-Y_{\tau,n+1}\|_{\L^2(\Omega)}^2
      +\|V_{\tau,n+1}-W_{\tau,n+1}\|_{\L^2(\Omega)}^2
\\
  & \quad
    \LS \|\hat X_{\tau,n+1}-\hat Y_{\tau,n+1}\|_{\L^2(\Omega)}^2
      +\|V_{\tau,n+1}-W_{\tau,n+1}\|_{\L^2(\Omega)}^2
\\
  & \quad
    = \|X_{\tau,n}-Y_{\tau,n}\|_{\L^2(\Omega)}^2
      +\|V_{\tau,n}-W_{\tau,n}||_{\L^2(\Omega)}^2.
\end{align*}
Since  $(X=\ID,V=0)$ is a trivial solution of the scheme, we immediately get
$$
  \|X_{\tau,n+1}-\ID\|_{\L^2(\Omega)}^2+\|V_{\tau,n+1}\|_{\L^2(\Omega)}^2
    \LS \|\bar{X}-\ID\|_{\L^2(\Omega)}^2+\|\bar{V}\|_{\L^2(\Omega)}^2.
$$
Because the scheme is translation invariant in $m$ and (discretely) in $n$, we
easily deduce the strong compactness in $\C^0_t(\L^2_m)$ of the discrete
solutions, linearly interpolated in time, for each 1-periodic initial condition
$(\bar{X}-\ID,\bar{V})$, first in $\H^1$ and then (by a density argument, using
the non-expansive property of the scheme) in $\L^2$. Let us now examine the
consistency of the scheme. To do that, let us compare a solution of the discrete
scheme to any smooth test function $m\rightarrow (Y(m),W( m))$ where $Y$ is
nondecreasing and $(Y(m)-m,W(m))$ is 1-periodic. Since the rearrangement
operator is non-expansive and $Y=Y^*$ is nondecreasing, we first get
\begin{align*}
  & \|X_{\tau,n+1}-Y\|_{\L^2(\Omega)}^2 
      + \|V_{\tau,n+1}-W\|_{\L^2(\Omega)}^2
\\
  & \quad
    \LS \|\hat X_{\tau,n+1}-Y\|_{\L^2(\Omega)}^2
      + \|V_{\tau,n+1}-W\|_{\L^2(\Omega)}^2
\\
  & \quad
    = \int_0^1 \Big\{ \big|(X_{\tau,n}(m)-m)\cos(\tau)
      +V_{\tau,n}(m)\sin(\tau)- (Y(m)-m) \big|^2
\\
  & \quad\hphantom{= \int_0^1 \Big\{}
  +\big|(X_{\tau,n}(m)-m)\sin(\tau)-V_{\tau,n}(m)\cos(\tau)+W(m)\big|^2 
      \Big\} \,\d m.
\end{align*}
One can then check that
\begin{align*}
  & \|X_{\tau,n+1}-Y\|_{\L^2(\Omega)}^2
      + \|V_{\tau,n+1}-W\|_{\L^2(\Omega)}^2
\\
  & \quad
    \LS \|X_{\tau,n}-Y\|_{\L^2(\Omega)}^2
      + \|V_{\tau,n}-W||_{\L^2(\Omega)}^2
\\
  & \quad 
    + 2\tau \int_0^1 \Big\{ \big(X_{\tau,n}(m)-Y(m)\big) V_{\tau,n}(m) 
      -\big(X_{\tau,n}(m)-m\big)\big(V_{\tau,n}(m)-W(m)\big) \Big\}
      \,\d m
      +\kappa \tau^2,
\end{align*}
with constant $\kappa$ depending only on the test functions $(Y,W)$ and the
initial data $(\bar{X},\bar{V})$. Clearly, this estimate is consistent with 
the differential inequality
\begin{align}
  & \frac{\d}{\dt} \Big\{ \|X(t,\cdot)-Y\|_{\L^2(\Omega)}^2
    +\|V(t,\cdot)-W\|_{\L^2(\Omega)}^2 \Big\}
\label{E:METRIC}
\\
  & \quad
    \LS 2\int_0^1  \Big\{ \big(X(t,m)-Y(m)\big)V(t,m)-\big(X(t,m)-m\big)\big(V(t,m)-W(m)\big)
    \Big\} \,\d m,
\nonumber
\end{align}
valid for all pair of 1-periodic functions of form $m\mapsto (Y(m)-m,W(m))$ with
$Y$ nondecreasing, which is nothing but the ``metric formulation'' of
\eqref{eq:59}. Indeed, for a.e.\ $t\GS 0$ fixed, by choosing $Y=X(t,\cdot)$ and
$W=V(t,\cdot)\pm Z$ for arbitrary 1-periodic $Z\in\L^2(\Omega)$, we find that
$$
  \dot{V}(t,m) + X(t,m)-m = 0;
$$
cf.\ \eqref{harmonic}. On the other hand, by choosing $W=V(t,\cdot)$ and $Y=0$ 
resp.\ $Y=2X(t,\cdot)$, we obtain
\begin{align*}
  \int_0^1 X(t,m) ( \dot{X}(t,m)-V(t,m) ) \,\d m &= 0
\\
  \int_0^1 Y(m) ( \dot{X}(t,m)-V(t,m) ) \,\d m &\GS 0
  \quad\text{for all $Y$ nondecreasing with $Y(m)-m$ 1-periodic.}
\end{align*}
This implies precisely that $-\dot{X}(t,\cdot)+V(t,\cdot) \in \partial I_\K(X(t,\cdot))$, which gives \eqref{eq:59}. This concludes the proof of convergence for 
the time-discrete scheme.


\section*{Acknowledgments}

YB's work is partially supported by the ANR grant OTARIE ANR-07-BLAN-0235.
WG gratefully acknowledges the support provided by NSF grants DMS-06-00791 and DMS-0901070. 
GS was partially supported by MIUR-PRIN'08 grant for the project ``Optimal mass transportation,
geometric and functional inequalities, and applications''. 
The research of MW was supported by NSF grant DMS-0701046.
This project started during a visit of GS to the School of Mathematics
of the Georgia Institute of Technology, whose support he gratefully
acknowledges. We note that this work has essentially been completed when we learned of Tadmor and Wei's result \cite{TadmorW} which is to be compared with Theorem \ref{T:RFAE} of the current manuscript.  


\begin{bibdiv} 
\begin{biblist}

\bib{Attouch84}{book}{
      author={Attouch, H.},
       title={Variational convergence for functions and operators},
   publisher={Pitman (Advanced Publishing Program)},
     address={Boston, MA},
        date={1984},
        ISBN={0-273-08583-2},
      review={\MR{86f:49002}},
}

\bib{AlbertiAmbrosio}{article}{ 
    AUTHOR = {Alberti, G.}, 
    AUTHOR = {Ambrosio, L.},
     TITLE = {A geometrical approach to monotone functions in 
              {${\bf R}\sp n$}},
   JOURNAL = {Math. Z.}, 
    VOLUME = {230}, 
      YEAR = {1999}, 
    NUMBER = {2}, 
     PAGES = {259--316}, 
}

\bib{AmbrosioFuscoPallara}{book}{
    AUTHOR = {Ambrosio, L.},
    AUTHOR = {Fusco, N.},
    AUTHOR = {Pallara, D.},
     TITLE = {Functions of bounded variation and free discontinuity
              problems},
    SERIES = {Oxford Mathematical Monographs},
 PUBLISHER = {The Clarendon Press Oxford University Press},
   ADDRESS = {New York},
      YEAR = {2000},
}

\bib{AmbrosioGigliSavare}{book}{ 
    AUTHOR = {Ambrosio, L.}, 
    AUTHOR = {Gigli, N.},
    AUTHOR = {Savar\'{e}, G.}, 
     TITLE = {Gradient Flows in Metric Spaces and in the Space of Probability 
              Measures}, 
    SERIES = {Lectures in Mathematics}, 
 PUBLISHER = {Birkh\"{a}user Verlag}, 
   ADDRESS = {Basel}, 
      YEAR = {2005}, 
}

\bib{BernicotVernel}{article}{
    AUTHOR = {Bernicot, F.},
    AUTHOR = {Vernel, J.},
     TITLE = {Existence of solutions for second-order differential inclusions
              involving proximal normal cones},
   JOURNAL = {Preprint},
      YEAR = {2010},
       URL = {http://arxiv.org/abs/1006.2292},
}

\bib{Brenier}{article}{
    AUTHOR = {Brenier, Y.},
     TITLE = {Polar factorization and monotone rearrangement of
              vector-valued functions},
   JOURNAL = {Comm. Pure Appl. Math.},
    VOLUME = {44},
      YEAR = {1991},
    NUMBER = {4},
     PAGES = {375--417},
}

\bib{Brenier2}{article}{
    AUTHOR = {Brenier, Y.},
     TITLE = {Order preserving vibrating strings and applications to
              electrodynamics and magnetohydrodynamics},
   JOURNAL = {Methods Appl. Anal.},
    VOLUME = {11},
      YEAR = {2004},
    NUMBER = {4},
     PAGES = {515--532},
}

\bib{Brezis}{book}{
    AUTHOR = {Br{\'e}zis, H.},
     TITLE = {Op\'erateurs maximaux monotones et semi-groupes de
              contractions dans les espaces de {H}ilbert},
 PUBLISHER = {North-Holland Publishing Co.},
   ADDRESS = {Amsterdam},
      YEAR = {1973},
}

\bib{DallAglio56}{article}{
    AUTHOR = {Dall'Aglio, G.},
     TITLE = {Sugli estremi dei momenti delle funzioni di
              ripartizione doppia},
   JOURNAL = {Ann. Scuola Norm. Sup. Pisa (3)},
    VOLUME = {10},
      YEAR = {1956},
     PAGES = {35--74},
}

\bib{Dudley}{article}{ 
    AUTHOR = {Dudley, R. M.}, 
     TITLE = {On second derivatives of convex functions}, 
   JOURNAL = {Math. Scand.}, 
    VOLUME = {41}, 
      YEAR = {1977},
    NUMBER = {1}, 
     PAGES = {159--174}, 
}

\bib{GNT1}{article}{
    AUTHOR = {Gangbo, W.},
    AUTHOR = {Nguyen, T.},
    AUTHOR = {Tudorascu, A.},
     TITLE = {Euler-Poisson systems as action minimizing paths in the 
              Wasserstein space},
   JOURNAL = {Arch. Ration. Mech. Anal.},
    VOLUME = {192},
      YEAR = {2009},
    NUMBER = {3},
     PAGES = {419--452},
     
}

\bib{GangboTudorascu}{incollection}{
    AUTHOR = {Gangbo, W.},
    AUTHOR = {Tudorascu, A.},
     TITLE = {A weak {KAM} theorem; from finite to infinite dimension},
 BOOKTITLE = {Optimal transportation, geometry and functional inequalities},
    SERIES = {CRM Series},
    VOLUME = {11},
     PAGES = {45--72},
 PUBLISHER = {Ed. Norm., Pisa},
      YEAR = {2010},
}

\bib{Gigli}{thesis}{ 
    AUTHOR = {Gigli, N.}, 
     TITLE = {On the geometry of the space of probability measures endowed 
              with the quadratic optimal transport distance},
      TYPE = {Ph.D. Thesis}, 
      YEAR = {2004}, 
}

\bib{LeFlochWestdickenberg}{article}{
    AUTHOR = {LeFloch, P. G.},
    AUTHOR = {Westdickenberg, M.},
     TITLE = {Finite energy solutions to the isentropic {E}uler equations
              with geometric effects},
   JOURNAL = {J. Math. Pures Appl. (9)},
    VOLUME = {88},
      YEAR = {2007},
    NUMBER = {5},
     PAGES = {389--429},
}

\bib{Moreau}{article}{
    AUTHOR = {Moreau, J.-J.},
     TITLE = {Liaisons unilat\'erales sans frottement et chocs
              in\'elastiques},
   JOURNAL = {C. R. Acad. Sci. Paris S\'er. II M\'ec. Phys. Chim. Sci.
              Univers Sci. Terre},
    VOLUME = {296},
      YEAR = {1983},
    NUMBER = {19},
     PAGES = {1473--1476},
}

\bib{Mosco}{article}{
    AUTHOR = {Mosco, U.},
     TITLE = {Composite media and asymptotic {D}irichlet forms},
   JOURNAL = {J. Funct. Anal.},
    VOLUME = {123},
      YEAR = {1994},
    NUMBER = {2},
     PAGES = {368--421},
}

\bib{NatileSavare}{article}{
    AUTHOR = {Natile, L.},
    AUTHOR = {Savar\'{e}, G.},
     TITLE = {A Wasserstein approach to the one-dimensional sticky 
     		  particle system},
   JOURNAL = {SIAM J. Math. Anal.},
   VOLUME= {41},
   PAGES = {1340--1365},
   YEAR = {2009},
}

\bib{Rachev-Ruschendorf98I}{book}{
    AUTHOR = {Rachev, S. T.},
    AUTHOR = {R\"{u}schendorf, L.},
     TITLE = {Mass transportation problems. {V}ol. {I}},
    SERIES = {Probability and its Applications},
 PUBLISHER = {Springer},
   ADDRESS = {New York},
      YEAR = {1998},
}

\bib{Rossi-Savare03}{article}{
    AUTHOR = {Rossi, R.},
    AUTHOR = {Savar{\'e}, G.},
     TITLE = {Tightness, integral equicontinuity and compactness for
              evolution problems in {B}anach spaces},
   JOURNAL = {Ann. Sc. Norm. Super. Pisa Cl. Sci. (5)},
    VOLUME = {2},
      YEAR = {2003},
    NUMBER = {2},
     PAGES = {395--431},
}

\bib{Savare96}{article}{
    AUTHOR = {Savar{\'e}, G.},
     TITLE = {Weak solutions and maximal regularity for abstract evolution
              inequalities},
   JOURNAL = {Adv. Math. Sci. Appl.},
    VOLUME = {6},
      YEAR = {1996},
    NUMBER = {2},
     PAGES = {377--418},
}

\bib{Schatzman}{article}{
    AUTHOR = {Schatzman, M.},
     TITLE = {A class of nonlinear differential equations of second order in
              time},
   JOURNAL = {Nonlinear Anal.},
    VOLUME = {2},
      YEAR = {1978},
    NUMBER = {3},
     PAGES = {355--373},
}

\bib{TadmorW}{article}{
    AUTHOR = {Tadmor, E.},
    AUTHOR = {Wei, D.},
     TITLE = {A variational representation of weak solutions for the pressureless Euler--Equations},
   JOURNAL = {arXiv:1102.5579},
      YEAR = {2011},
     PAGES = {1--21},
}

\bib{Villani}{book}{
    AUTHOR = {Villani, C.},
     TITLE = {Topics in optimal transportation},
    SERIES = {Graduate Studies in Mathematics},
    VOLUME = {58},
 PUBLISHER = {American Mathematical Society},
   ADDRESS = {Providence, R.I.},
      YEAR = {2003},
}

\bib{Zarantonello}{article}{ 
    AUTHOR = {Zarantonello, E. H.}, 
     TITLE = {Projections on convex sets in Hilbert space and spectral 
              theory. I. Projections on convex sets}, 
CONFERENCE = { 
     TITLE = {Contributions to nonlinear functional analysis (Proc. Sympos.,
              Math. Res. Center, Madison, WI, 1971)}, 
             }, 
      BOOK = {
 PUBLISHER = {Academic Press}, 
     PLACE = {New York}, 
             }, 
      DATE = {1971}, 
     PAGES = {237--341}, 
}

\end{biblist} 
\end{bibdiv}


\end{document}